\documentclass[oneside,11pt, reqno]{amsart}

\usepackage{amsmath,amssymb,amsthm, mathrsfs, mathtools}
\usepackage{graphicx}
\usepackage{amscd,mathtools}
\usepackage{enumitem} \setlist{nosep} 
\usepackage[utf8]{inputenc}
\usepackage[english]{babel}
\usepackage{color}
\usepackage[pagebackref,breaklinks,hidelinks,unicode]{hyperref}
\usepackage{floatrow}
\usepackage{float}
\usepackage{comment}
\usepackage{mathtools}
\usepackage{csquotes}
\usepackage[font=small,justification=centering]{caption}
\usepackage{subcaption}
\usepackage[dvipsnames]{xcolor}
\usepackage{mathabx} 
\usepackage{xfrac} 
\usepackage[normalem]{ulem} 
\usepackage{dirtytalk}
\usepackage{lipsum}

\usepackage[margin=3cm]{geometry}

\usepackage{tikz}
\usepackage{tikz-cd}
\usetikzlibrary{automata}
\usetikzlibrary{calc,decorations.pathreplacing}
\usetikzlibrary{arrows}
\usetikzlibrary{decorations.pathreplacing}
\usetikzlibrary{intersections}

\makeatletter
    \newtheorem*{rep@theorem}{\rep@title}
    \newcommand{\newreptheorem}[2]{%
    \newenvironment{rep#1}[1]{%
    \def\rep@title{#2 \ref{##1}}%
    \begin{rep@theorem}}%
    {\end{rep@theorem}}}
\makeatother

\newtheorem{theorem}{Theorem}[section]

\newreptheorem{proposition}{Proposition}

\newtheorem{lemma}[theorem]{Lemma}

\newtheorem{Set up}[theorem]{Set-up}

\theoremstyle{definition}

\newtheorem{definition}[theorem]{Definition}

\newtheorem{example}[theorem]{Example}

\newtheorem{remark}[theorem]{Remark}

\newtheorem*{answer*}{Answer}
\newtheorem*{application*}{Application}
\newtheorem*{warning}{Warning}

\newcommand{\id}{\text{id}}

\DeclarePairedDelimiterX{\Norm}[1]{\lVert}{\rVert}{#1}
\theoremstyle{definition}




  
  \newcommand{\calB}{\mathcal{B}}
  \newcommand{\calC}{\mathcal{C}}

  \newcommand{\calG}{\mathcal{G}}
  \newcommand{\calH}{\mathcal{H}}

  \newcommand{\calL}{\mathcal{L}}

  \newcommand{\calU}{\mathcal{U}}
  \newcommand{\calV}{\mathcal{V}}


\renewcommand*{\backrefalt}[4]{\ifcase #1 (Not cited).\or (Cited p.~#2).\else (Cited pp.~#2).\fi} 


\newcounter{shcount}
 

\newcounter{enumlabelcount}
\makeatletter\newcommand\enumlabel[1][]{\item[#1]
    \refstepcounter{enumlabelcount}\def\@currentlabel{#1}}\makeatother

\definecolor{harrycomment}{rgb}{0.6,0,0.4}

\DeclareMathOperator{\Aut}{Aut}

\DeclareMathOperator{\dist}{\mathsf{d}}
\DeclareMathOperator{\Dist}{\mathsf{D}}
\DeclareMathOperator{\diam}{diam}

\DeclareMathOperator{\cay}{Cay}

\newcommand{\hull}{\mathrm{hull}}

\DeclareMathOperator{\isom}{Isom}

\newcommand*{\X}{X_{\Dist}}

\newcommand*{\cal}{\mathcal}

\newcounter{claimcount}

\newenvironment{claim*}[1]{\par\vspace{2mm}\noindent
    \underline{Claim:}\hspace{2mm}#1}{}



\title[Injectivity, cubical approximations and equivariant wall structures]{injectivity, cubical approximations and equivariant wall structures beyond CAT(0) cube complexes}
\hypersetup{pdftitle = Injectivity and cubical approximations}
\author{Abdul Zalloum}
\address{Mathematics department, 40 St. George Street, University of Toronto, Toronto, Canada}
\email{abdul.zalloum@utoronto.ca}
\begin{document}

\begin{abstract} This is an expository survey with two goals. 
\begin{enumerate}
    \item The primary goal is to discuss and highlight the impact of two recent influential ideas in geometric group theory. The first of which is the notion of an \emph{injective metric space} which is a rich class of spaces that was imported to geometric group theory by Lang \cite{LANG2013} and have shown to be of a great effect. The second is Behrstock-Hagen-Sisto's \emph{cubical approximation} theorem \cite{HHS_quasi} which provides a novel and particularly successful approach for studying mapping class groups (of finite type surfaces) and more generally, hierarchically hyperbolic groups.


\item Our second goal is to demonstrate how numerous geodesic metric spaces including hyperbolic spaces, CAT(0) spaces, and hierarchically hyperbolic spaces admit a strikingly rich equivariant wall structure: a discovery that was inspired by the aforementioned machines; the cubical approximation theorem as well as injective metric spaces. The presence of such a wall structure was first highlighted in \cite{PSZCAT} by Petyt, Spriano and the author.

\end{enumerate}
\end{abstract}

\maketitle

    
\setcounter{tocdepth}{1}\tableofcontents\setcounter{tocdepth}{2}

\part{Introduction and heuristic discussions}

    \section{Two influential ideas and some of their impact}\label{sec:two_ideas}

    Over the past few years, a bulk of ideas have been introduced to geometric group theory with great effect. Among these, two are most relevant to this survey:

    \vspace{2mm}

\noindent (1) \textbf{Injective metric spaces}. The first is Lang's work \cite{LANG2013} which brought \emph{injective metric spaces} to attention in geometric group theory. Recall that a metric space $X$ is said to be injective if it's geodesic and every collection of pair-wise intersecting closed balls $\{B_i\}_{i \in I}$ have a total intersection, that is, a point $x$, that lives in $\cap_{i \in I} B_i$. Equivalently, a metric space $X$ is injective if whenever $Y \subset Z$ and $f:Y \rightarrow X$ is a 1-Lipshitz map, then $f$ extends to a 1-Lipshitz map $\tilde{f}:Z \rightarrow X$ with $f(y)=\tilde{f}(y)$ for all $y \in Y,$ see  \cite{Aronszajn1956EXTENSIONOU}. It is worth noting that the notion of injectivity is \textbf{not} a coarse notion but rather a very fine one, for example, injective metric spaces are contractible and share many fundamental properties with CAT(0) spaces. One such common feature is that, in the same way CAT(0) cube complexes $X$ can be equipped with a CAT(0) distance, they can also be equipped with a distance $\dist_{\infty}$ making $(X, \dist_{\infty})$ an injective metric space. Despite their similarity with CAT(0) spaces, injective metric spaces seem to be much more tractable, for example, while the question of whether hyperbolic groups are CAT(0) is still wide open, in \cite{LANG2013}, Lang established the remarkable theorem that hyperbolic groups admit proper cocompact actions on injective metric spaces. 

More recently, Haettel-Hoda-Petyt \cite{HHP} showed that mapping class groups of finite type surfaces $G$ and more generally hierarchically hyperbolic groups admit geometric (proper co-bounded) actions on certain injective metric spaces $X_\text{inj}$. Given the fine, CAT(0)-like nature of injective metric spaces, such a statement is quite surprising especially in light of the fact that mapping class groups of finite type surfaces can't admit proper actions (by semi-simple isometries) on CAT(0) spaces as shown by Kapovich and Leeb in \cite{Kapovich1996} (see also \cite{Bridson2010}), for the rest of the survey, when we say mapping class groups we mean mapping class groups of finite-type surfaces. A subsequent work of Sisto and the author \cite{Sisto-Zalloum-22} shows that Haettel-Hoda-Petyt's injective metric spaces $X_{\text{inj}}$ provide the first known \emph{geometric models} for mapping class groups $G$ where pseudo-Anosovs have strongly contracting quasi-axis; by a geometric model for $G$ we mean a space $X$ acted on properly co-boundedly by $G$; and hence equivariently quasi-isometric to any of its Cayley graph. In fact, and relying on work of Yang, \cite{Yang2018}, the presence of such strongly contracting quasi-axis in $X_\text{inj}$ confirms Thurston's conjecture on density of pseudo-Anosov orbits in the geometric model $X_{\text{inj}}$ of the underlying mapping class group (see also work of Choi \cite{Choi2021PseudoAnosovsAE} who showed that each mapping class group admit a finite generating set where the previous density conjecture of Thurston holds). Furthermore, combining \cite{HHP}, \cite{Sisto-Zalloum-22} with work of Arzhantseva, Cashen and Tao \cite{Arzhantseva2015} shows that the space $X_{\text{inj}}$ is the first known geometric model for mapping class groups where they admit a \emph{growth tight} action; see \cite{Arzhantseva2015} for a detailed discussion of the notion. Finally, although geodesics in Cayley graphs of mapping class groups are ill-behaved and can backtrack an arbitrary amount in the curve graph $\calC S$, \cite{Rafi2021}, geodesics in $X_{\text{inj}}$ all define uniform-quality reparametrized quasi-geodesics in $\calC S$; this is established in a forthcoming work of Rafi and the author. 
\vspace{2mm}

The aforementioned statements suggest that Haettel-Hoda-Petyt's injective space $X_{\text{inj}}$ is a more appropriate geometric model for mapping class groups compared to their Cayley graphs, and they encourage further investigation of injective metric spaces and groups. Aside from cubulated and hierarchically hyperbolic groups, the class of injective metric spaces and groups admitting geometric actions on them is broad and it includes finitely presented graphical C(4)-T(4) small cancellation groups \cite{CCGHO20}, weak Garside groups of finite type and FC-type \cite{Huang2021}, some Euclidean buildings and simplicial complexes \cite{Haettel2021} as well as many symmetric spaces \cite{Haettel2022}.


\noindent (2) \textbf{Cubical approximations}. The second influential machine relevant to the survey is work of Behrstock, Hagen and Sisto \cite{HHS_quasi} who built upon ideas of Bowditch \cite{Bowditch13} to show that the ``convex hull" of a finite set of points in a mapping class group (and more generally, an in an HHS) is ``median" quasi-isometric to a CAT(0) cube complex, we shall refer to this as the \emph{cubical approximation} theorem. Informally, this says that the smallest ``convex" set containing a finite set of points in a mapping class group ``looks like" a CAT(0) cube complex. Such a theorem provides a strong bridge between the worlds of mapping class groups and CAT(0) cube complexes allowing one to export a wealth of cubical tools to study the latter. This very philosophy has been successfully implemented in various recent works leading to the resolution of two long-standing open problems in the field including Farb's quasi-flats conjecture which was established in \cite{HHS_quasi} by Behrstock-Hagen-Sisto as well as semi-hyperbolicity of mapping class groups as shown independently by Durham-Minsky-Sisto in \cite{DMS20} and by Haettel-Hoda-Petyt in \cite{HHP}. Interestingly, the above two independent proofs emerged simultaneously three decades after semi-hyperbolicity of mapping class groups was conjectured, and despite their vastly different approaches, both of their proofs rely heavily on this cubical approximation machine. It's worth noting that more recently, Bowditch has given a different proof to Farb's quasi-flats conjecture under some surprisingly weak assumptions \cite{Bowditch2019QUASIFLATSIC}.

Finally, we discuss two recent generalizations of the cubical approximation machine. The first of which is work of Durham and the author in \cite{Durham-Zalloum22} were we extend the cubical approximation theorem allowing the finite set to include ``boundary points". Namely, we show that ``convex hulls" of finitely many points of the compactification of hierarchically hyperbolic groups (in the sense of \cite{Durham2017-ce}) are also median quasi-isometric to CAT(0) cube complexes. The utility of such a statement is that it allows one to use CAT(0) cube complex techniques to analyze not only the interior of an HHS, but also the various boundary notions it comes with, see \cite{Durham-Zalloum22} for more details as well as \cite{Abbott-Medici-23} by Abbott and Incerti-Medici for more applications. The second of which is Petyt's recent surprising theorem that mapping class groups (and more generally, colorable hierarchically hyperbolic groups) are ``median"-quasi-isometric to CAT(0) cube complexes providing a globalization of the cubical approximation machine \cite{Petyt21}. Unlike the cubical approximation theorem \cite{HHS_quasi} and its boundary extension \cite{Durham-Zalloum22}, Petyt's quasi-isometry is not equivariant. That being said, such a theorem is the best one can expect in this context as mapping class groups can't geometrically act on CAT(0) spaces \cite{Kapovich1996}, \cite{Bridson2010} or properly on CAT(0) cube complexes as shown by Genevois in \cite{Genevois2019MedianSO}.

      The purpose of this survey is twofold. First, to explore and highlight the recent influence of the above two statements. Second, to show that numerous spaces of non-positive curvature enjoy a rich equivariant wall structures; a discovery that was inspired by these two ideas, and first highlighted in \cite{PSZCAT} by Petyt, Spriano and the author.

      \section{What will you find in this survey? A detailed summary}\label{sec:what_will_you_find} In \underline{Part 1} of the survey, you will find the following.
      
\vspace{2mm}

\noindent \underline{\textbf{Section \ref{sec:two_ideas}}} discusses the two primary objects driving the rest of the survey, injective metric spaces as well as the cubical approximation theorem. It also highlights the recent impact of these two ideas on the field.

      \vspace{2mm}

      \noindent \underline{\textbf{Section \ref{sec:what_will_you_find}}} is this.

      \vspace{2mm}

     \noindent \underline{\textbf{Section \ref{sec:intro_CCC}}} briefly discusses CAT(0) cube complexes which on one hand are the primary examples of injective metric spaces and on the other, they provide the building blocks of the cubical approximation machine. In particular, the section describes the fundamental tools used to study CAT(0) cube complexes including their hyperplanes, median structures as well as their $\dist_\infty$-distance which makes them injective metric spaces.
     
      \vspace{2mm}

\noindent \underline{\textbf{Section \ref{sec:intr_coarse_median}}} discusses the class of coarse median spaces \cite{Bowditch13} which is both a pre-requisite to understanding the cubical approximation machine and a primary element to showing that mapping class groups as well as HHGs act geometrically on injective metric spaces.

\vspace{2mm}

\noindent \underline{\textbf{Section \ref{sec:Intro_Walls}}}       initiates the discussion regarding the existence of equivariant wall structures beyond CAT(0) cube complexes. 

\vspace{2mm}
     
 \noindent \underline{\textbf{Section \ref{sec:intro_their_connection}}} provides a brief sketch of Haettel-Hoda-Petyt's proof that hierarchically hyperbolic groups admit geometric actions on injective metric spaces which relies on the cubical approximation machine and is inspired by Bowditch's work on median spaces \cite{BOWDITCH2018}. It also discusses how their proof along with the cubical approximation theorem inspired Petyt, Spriano and the author's observation that many spaces of non-positive curvature admit equivariant wall structures. Finally, we discuss examples where such equivarint wall structures (called \emph{curtains}) are present.
 
\vspace{2mm}

\noindent \underline{\textbf{Section \ref{sec:why_care_intro}}} provides a glimpse of the utility of these curtains. For instance, it shows how curtains in HHGs can be used to describe their fundamental geometric components including their distance, median structures, convex sets and hierarchy paths.

\vspace{2mm}

\noindent \underline{\textbf{Section \ref{sec:Marrying_intro}}} summarizes some of the progress made over the past five years regarding the connections between mapping class groups, CAT(0) spaces and injective metric spaces.
\vspace{2mm}

In \underline{Part 2}, we provide detailed discussions of injective metric spaces, CAT(0) cube complexes, HHSes, CAT(0) spaces and we list many tools for studying them, some of which are well-known while others are more modern. More precisely:

\vspace{2mm}

\noindent \underline{\textbf{Section \ref{sec:Injective}}} defines injective metric spaces, explores aspects of their accommodating geometry and discusses the main philosophy used to prove statements about them. In particular, we demonstrate how the universal property of injective metric spaces has many immediate powerful consequences regarding their geometry. This includes the existence of a collection of fellow-travelling geodesics connecting every pair of points called a \emph{geodesic combing}, the existence of an isometrically embedded tripod for any given three points, and more generally, the presence of a center associated to any finite set of points. Finally, we discuss injective hulls and highlight the canonical nature of injective metric spaces, both from a general metric geometry perspective and from a geometric group theory one.

\vspace{2mm}

\noindent \underline{\textbf{Section \ref{sec:CCC and their hyperplanes}}} discusses CAT(0) cube complexes in more details and demonstrates how their geometry is fully described by their hyperplanes. This is most apparent by their ultrafilters and by Sageev's construction; both of which are discussed in this section.

\vspace{2mm}

\noindent \underline{\textbf{Section \ref{sec:HHSes}}} gives a brief overview of the theory of hierarchical hyperbolicity and demonstrates how their entire coarse geometry is recorded via the collection of hyperbolic spaces $\calU=\{U_i
\}_{i \in I}$ they come with. We also describe how the collection $\calU$ contains an exceptionally powerful hyperbolic spaces $\calC S$; which we refer to as \emph{the omniscient}. It might be good to already remark that while HHSes usually come with various HHS structures, the omniscient refers to a special hyperbolic space that comes with a specific HHS strcutrue introduced in \cite{ABD}. Perhaps one can say ``in the beginning there was nothing, and then Abbott, Behrstock and Durham came up with the omniscient" \cite{ABD}. We then provide many examples of useful constructions and statements in HHSes that utilize their entire collection of hyperbolic spaces $\calU$--such as the presence of a coarse median \cite{HHS2}--  in addition to ones that only utilize their omniscient $\calC S$, such as the Morse local-to-global property established by Russell, Spriano and Tran in \cite{Morse-local-to-global}.

\vspace{2mm}

\noindent \underline{\textbf{Section \ref{sec:from_HHS_to_curtains}}} discusses how starting with a CAT(0) cube complex, its collection of hyperplanes $\calH$ can be used to build hyperbolic spaces $\calU=\{U_i\}_{i \in I}$ (such as the contact graph of Hagen \cite{hagen:weak} and the separation space of Genevois \cite{genevois:hyperbolicities}) which often turn the cube complex into an HHS; namely, as in the following diagram: $$ X_{\text{CAT(0) cube complex }} \rightarrow	 \calH_{\text{hyperplanes }} \rightarrow 
\calU_{\text{hyperbolic spaces }} \rightarrow X_{\text{ HHS }}.$$

It then proposes the question: to which extent can the above diagram be reversed? Namely, starting with an arbitrary HHS (not necessarily a cube complex) and its collection of hyperbolic spaces $\calU=\{U_i\}_{i \in I}$, it asks whether these hyperbolic spaces can be used to provide ``hyperplanes" reversing the second arrow of the above diagram? And if so, whether these hyperplanes can be used to build a CAT(0) cube complex, reversing the first?

We then explain that while the answer to the second question is known to be negative, the answer to the first one is positive and in a very strong sense. In particular, we have the following diagram:

$$ \calH_{\text{curtains}} \leftarrow  \calU_{\text{hyperbolic spaces }}\leftarrow X_{\text{HHS} }.$$

\vspace{2mm}

\noindent \underline{\textbf{Section \ref{sec:CAT(0)_curtains}}} discusses curtains in the context of CAT(0) spaces \cite{PSZCAT} where we compare them with the ones present in the class of HHSes. Moreover, we describe how these curtains provide the building blocks for the \emph{curtain model}; a counter part of the curve graph which is a universal $\delta$-hyperbolic space for rank-one elements of the underlying CAT(0) space $X.$ Finally, we describe what it means for a geodesic in the CAT(0) space $X$ to make distance in the curtain model.

\vspace{2mm}

\noindent \underline{\textbf{Section \ref{sec:out}}} provides a summary of the ideas and objects discussed within the survey.

\vspace{2mm}

\noindent \underline{\textbf{Section \ref{sec:questions}}} contains some questions relating to the subjects discussed in the survey.

\vspace{2mm}

\noindent \underline{\textbf{Appendix A}}. This survey includes an idea and a few related lemmas that haven't already appeared in the literature. The new idea is the fact that hyperbolic spaces, HHSes and many other spaces with non-positive curvature admit equivariant wall structures that can be used to describe many interesting aspects of their geometry. The presence of these wall structures and every new lemma appearing in this survey (all of which revolve around these wall structures) were obtained in collaboration with Petyt and Spriano. The proof of these new lemmas is the content of Appendix A.

    \subsection*{Acknowledgments} First and foremost, I am very grateful to Anthony Genevois for an extremely thorough and helpful set of comments on the first draft of this survey, his comments have improved the exposition drastically. Furthermore:

    \begin{enumerate}
        \item I am very thankful to Jenny Beck for giving a very nice talk where I have learnt many of the constructions in Section \ref{sec:Injective} on injective spaces, in particular, the title of that section is inspired by the title of Jenny's talk.

        \item  I am quite grateful to Mathew Durham for patiently explaining to me the proof and the utility of the Behrstock-Hagen-Sisto's cubical approximation machine.

\item I would like to very much thank Thomas Haettel for some very useful remarks on an earlier draft of the survey and for coordinating with me regarding the connections between this survey and his forthcoming one on Helly groups and injective metric spaces.

\item  I am especially thankful to Mark Hagen for teaching me numerous cubical techniques and getting me to appreciate CAT(0) cube complexes and their hyperplanes, this survey would truly not have been possible without him.

\item I would like to greatly thank Urs Lang for some useful feedback regarding the first draft of this survey.

 \item  I am immensely grateful to Harry Petyt for some very helpful comments on an earlier draft of this survey, for thoroughly explaining his paper with Haettel and Hoda to me and for showing me many of the constructions that utilize injectivity.

        \item I am very thankful to Kasra Rafi for various discussions regarding the survey, for his continuous support and for always being exceptionally generous with his time.

        \item Much gratitude goes to Alessandro Sisto for useful comments regarding the first draft of this survey. 
        
        \item I am very grateful to Davide Spriano for teaching me many aspects of hierarchical hyperbolicity.

        \item  Warmest thanks go to my collaborators Harry Petyt and Davide Spriano for being kind enough allowing me to extract a few statements from our ongoing projects due to their relevance to the survey; all the new lemmas and proofs appearing in this survey were obtained in collaboration with them.

\item Finally, I would like to thank all members of the mathematics department at the University of Utah including Mladen Bestvina, Kenneth Bromberg, Elizabeth Field, Vivian He, Michael Kopreski, Priyam Patel, Alexander Rasmussen, and especially George Shaji for their hospitality during my visit between the 5th and 14th of April 2023 where the first draft of this survey was completed.

    \end{enumerate}

\vspace{4mm}
Since we intend for the survey to be self-contained, we start by discussing CAT(0) cube complexes.

\section{CAT(0) cube complexes, their median  structure and injective distance}\label{sec:intro_CCC} Putting the formal definition aside, a CAT(0) cube complex $X$ can be thought of as a contractible space which is built by gluing unit Euclidean cubes $[0,1]^k$ isometrically along faces. It's said to be \emph{finite dimensional} if the largest $k$ where $[0,1]^k \subset X$ is bounded above by an integer $n$; this integer is called \emph{the dimension} of $X.$ For instance, if the  3-cube and all the other 2-cubes in Figure \ref{fig:A hyperplane} are filled in, then the ambient space is a CAT(0) cube complex and otherwise it's not. Each cube $[0,1]^k$ comes with $k$ distinct \emph{midcubes} which are defined to be the restriction of one coordinate of $[0,1]^k$ to $\frac{1}{2}$. Finally, a \emph{hyperplane} is what results from starting with a midcube $c$, if $c$ touches a midcube $c'$ in an adjacent cube, we consider $c \cup c'$ and continue inductively, that is, if $c'$ touches a midcube $c''$ in another adjacent cube, we consider $c \cup c' \cup c''$, etc. See Figure \ref{fig:A hyperplane} where we start with a midcube given by a single blue edge, and extend it to the full blue hyperplane.

\begin{figure}[ht]
   \includegraphics[width=8cm, trim = .001cm 5cm 2cm 3cm]{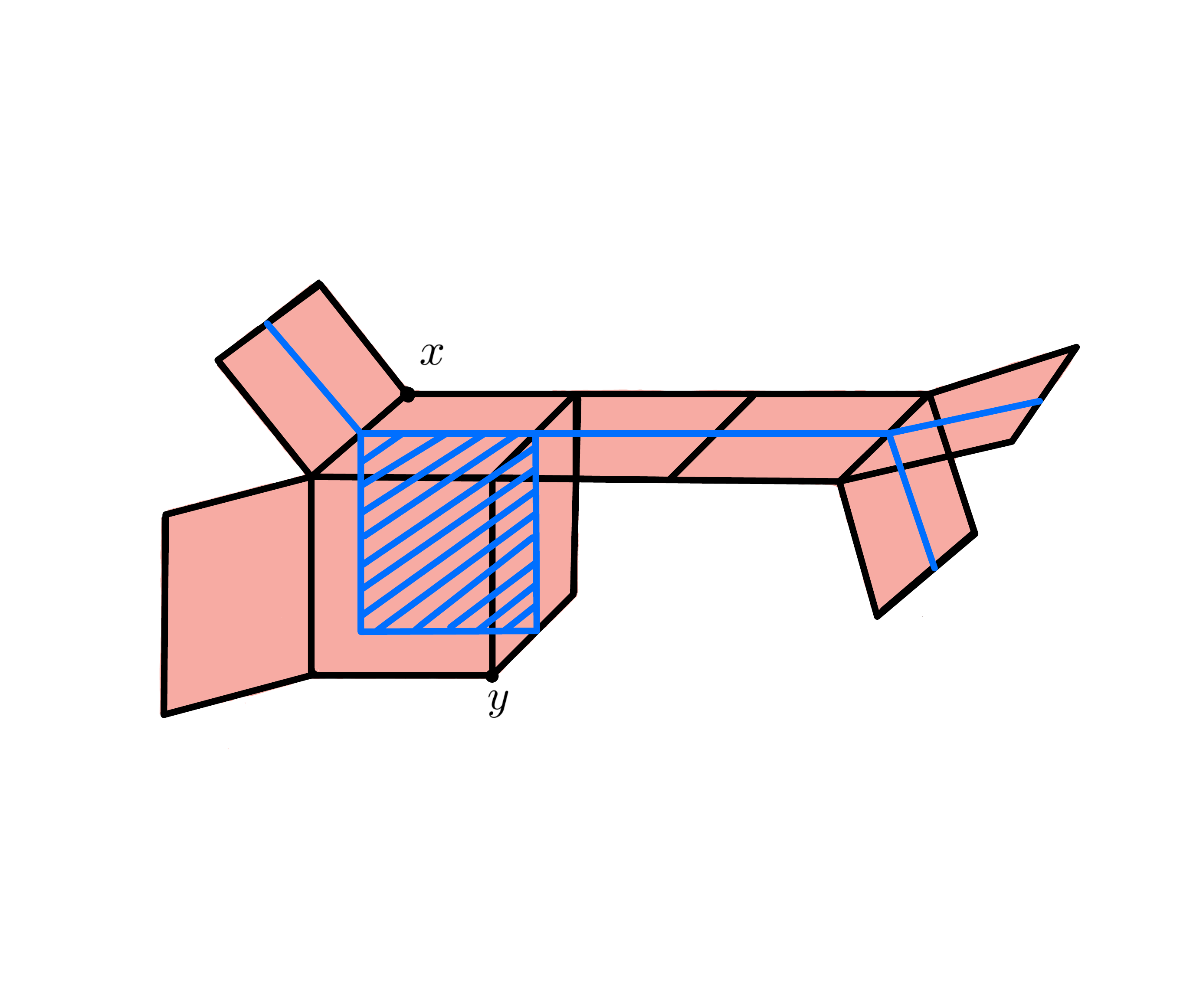}\centering
\caption{A hyperplane.} \label{fig:A hyperplane}
\end{figure}

As you can see from Figure \ref{fig:A hyperplane}, each hyperplane $h$ determines two disjoint subsets $h^+,h^-$ called \emph{half spaces} satisfying $X=h^+ \cup h^-$. We say that two points $x,y \in X$ (and more generally, two subsets $A,B \subset X$) are \emph{separated} by $h$ if one of them lives in $h^+$ and the other lives in $h^-.$ The standard metric to consider a CAT(0) cube complex with is the \emph{combinatorial} or the \emph{taxi cap} metric, denoted $\dist$, which for any pair of vertices $x,y \in X$ records the number of hyperplanes $h$ separating $x,y.$ Equivalently, its the metric that results from equipping each cube $[0,1]^k$ with the distance $$\dist((x_1,\cdots x_k), (y_1,\cdots y_k))=|x_1-y_1|+\cdots |x_k-y_k|,$$ and extending the metric $\dist$ to $X$ in the obvious way. By a \emph{combinatorial geodesic} or simply a \emph{geodesic} we mean a geodesic with respect to the combinatorial metric $\dist.$ 

Hyperplanes and their half spaces are all convex (combinatorial geodesics with end points on a hyperplane remain in that hyperplanes), and it's known that convex sets in a CAT(0) cube complex satisfy the \emph{Helly property}: a collection of pair-wise intersecting convex sets $C_i$ totally intersect, that is, if $C_i \cap C_j \neq \emptyset
$ for all $C_i,C_j \in \{C_i\}_{i \in I}$, then there is a point $x \in \cap_{i \in I} C_i.$

It's not hard to see that an intersecting pair of hyperplanes must intersect in a given cube. In particular, convexity of such hyperplanes combined with the Helly property shows that the cardinality of pair-wise intersecting hyperplanes is bounded above by the dimension of the CAT(0) cube complex $X$, assuming it's finite dimensional.

Each CAT(0) cube complex $X$ admits a metric $\dist_{\infty}$ so that $(X,\dist_\infty)$ is an injective metric space. The $\dist_{\infty}$-distance on a CAT(0) cube complex $X$ is defined by declaring each two distinct vertices $x,y$ of the same cube to be at distance 1, see Figure \ref{fig:infinite_distance}.




\begin{figure}[ht]
   \includegraphics[width=10cm, trim =.001cm 7cm 2cm 6cm]{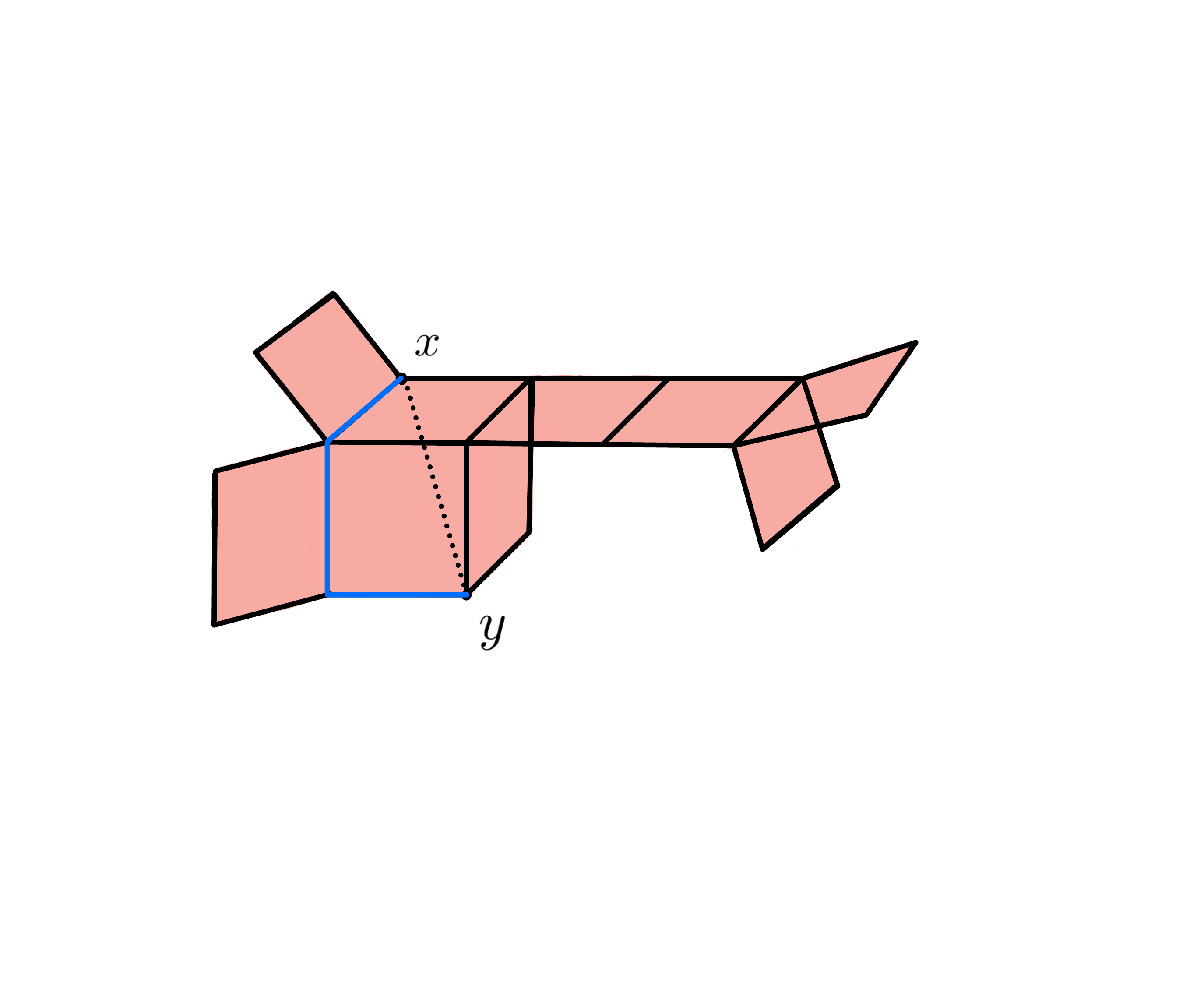}\centering
\caption{The blue edges represent a combinatorial geodesic showing that $\dist(x,y)=3$. On the other hand, $\dist_\infty(x,y)=1.$} \label{fig:infinite_distance}
\end{figure}

Recall that a \emph{chain} is a collection of hyperplanes $c=\{h_1,\cdots h_k\}$ such that $h_{i}$ separate $h_{i-1}$ from $h_{i+1}.$ In particular, elements of $c$ are pair-wise disjoint.

Here is a summary of the main properties the injective distance $\dist_{\infty}$ on a CAT(0) cube complex satisfy:

\vspace{2mm}

\noindent 1) \underline{Forgetting the dimension}: With respect to the combinatorial distance $\dist$, the diameter of a cube $[0,1]^n$ is exactly $n.$ Namely, the points $(0, \cdots, 0), (1,\cdots, 1)$ are at distance $n$ as they are separated by exactly $n$-hyperplanes. On the other hand, and independently of $n$, the injective distance $\dist_{\infty}(x,y)=1$ for any two distinct vertices $x,y$ in the same cube $[0,1]^n.$  Hence, the injective $\dist_{\infty}$-distance on a cube $[0,1]^n$ \emph{forgets} its dimension, or equivalently, it disregards the contribution of intersecting hyperplanes to the total distance.

\vspace{2mm}

\noindent 2) \underline{Chain distance}: Since intersecting hyperplanes aren't recorded by the $\dist_{\infty}$-injective distance on a cube, it follows that for any pair of vertices $x,y$ in a cube complex $X$, we have $$\dist_{\infty}(x,y)= \text{sup}\{|c|\,:\, c \text{ is a chain of hyperplanes separating }x,y
\}.$$ 

Finally, the combinatorial distance $\dist$ on a CAT(0) cube complex $X$ makes $X$ into a \emph{median metric space}. Recall that a metric space is said to be median if for any $x_1,x_2,x_3$, there is a unique point $m=m(x_1,x_2,x_3)$ such that $$\dist(x_i,x_j)=\dist(x_i,m)+\dist(m,x_j),$$ for all $i \neq j.$ In the language of hyperplanes, this median is exactly what results from orienting each hyperplane towards the majority of $\{x_1,x_2,x_3\}$ (choosing the half space that contains at least two points among $\{x_1,x_2,x_3\}$) and taking the intersection of all such half spaces.

\section{Coarse median spaces}\label{sec:intr_coarse_median}

A space is said to be \emph{coarse median} if every three points $x_1,x_2,x_3$ admit a point $\mu$ called the coarse median (or simply, the median) that satisfies certain coarse conditions similar to that of above inequality.

A subset $A$ of a coarse median space is said to be \emph{$K$-median-convex} if for any $x,y \in A,$ and any $z \in X$, the point $\mu(x,y,z)$ lies in the $K$-neighborhood of $A.$ For two coarse median spaces $X_1,X_2$, a map $f:X_1 \rightarrow X_2$ is said to be $K$-median if $\dist (f(\mu(c_1,c_2,c_3)), \mu(f(c_1),f(c_2),f(c_3))) \leq K$. A path $\gamma:[a,b] \rightarrow X$ is said to be a \emph{$K$-median path} if it is a $K$-median map in the aforementioned sense. For a reader who is familiar with HHSes, it is worth noting that $K$-median paths are exactly $K'$-hierarchy paths, for instance, see \cite{petyt:onlarge} or \cite{Durham-Zalloum22}. Finally, a \emph{$K$-median quasi-isometry} $f:X_1 \rightarrow X_2$ is a $(K,K)$-quasi-isometry which is $K$-median.

\section{Equivariant wall structures}\label{sec:Intro_Walls} As we shall thoroughly discuss in the upcoming sections, CAT(0) spaces, hyperbolic and hierarchically hyperbolic spaces admit equivariant wall structures that record many interesting aspects of their geometry. In particular, if $X$ is any of the aforementioned spaces, then there is a collection of $\Aut(X)$-invariant subspaces $\calH=\{h_i\}_{i \in I}$, called \emph{curtains} satisfying the following conditions (and more, but the precise definitions will be given later):

\vspace{2mm}

\noindent 1) \underline{Curtains separate:} Each $h$ is coarsely path-connected and $X-h \subseteq h^+ \sqcup h^+$ where $h^+,h^-$ are disjoint path-connected subspaces of $X.$

 \vspace{2mm}

 \noindent  2) \underline{Chain distance:} There exists a constant $K=K(X)$ such that for any $x,y \in X$, we have 
    
    $$\dist(x,y) \underset{K}{\sim} \text{sup}\{|c|\, :\, c \text{ is a chain of curtains separating }x,y
\}.$$

The notation  $\underset{K}{\sim}$ here means the quantities are equal up to an additive and multiplicative error depending only on $K.$

For instance, in the setting of CAT(0) spaces, a curtain in $X$ is defined to be the preimage of a unit interval $I$ along a geodesic $\alpha$ under the nearest point projection map $\pi_\alpha:X \rightarrow \alpha$. Unlike hyperplanes in finite-dimensional CAT(0) cube complexes, there is no upper bound on the number of pairwise intersecting curtains in any of the aforementioned spaces. However, and even in the context of CAT(0) cube complexes, the above generality of curtains provides a wall structure which is invariant under the \emph{entire} isometry group of the CAT(0) space $X$, a property that cubical hyperplanes fail to enjoy as can be seen from rotations on $\mathbb{R}^2$. In fact, given a CAT(0) space $X$ and $G<\isom(X),$ the space $X$ admits a cube complex structure acted upon by $G$ if and only if there is a $G$-invariant discrete subcollection of curtains.

\section{And their connections with injectivity and cubical approximations}\label{sec:intro_their_connection} As we just mentioned, the wall structures enjoyed by CAT(0) spaces, hyperbolic and hierarchically hyprbolic spaces fail to be \emph{discrete}: an arbitrary pair of points $x,y$ are generally separated by an infinite collection of walls. However, the cardinality of a maximal \emph{chain} of curtains separating $x,y$ is always finite, and in fact, it coarsely agrees with $\dist (x,y).$ This is reminiscent of the fact that the $\dist_{\infty}$-injective distance on a CAT(0) cube complex $X$ forgets its dimension as it disregards intersecting hyperplanes and only counts the maximal chain of hyperplanes separating $x,y \in X$; hinting at some connections between injectivity and these equivariant wall structures.

 An $E$-\emph{hierarchically hyperbolic space} is a quasi-geodesic metric space $(X,\dist)$ that comes with a collection of $E$-hyperbolic spaces $\calU=\{U_i\}_{i \in I}$ and $(E,E)$-coarsely-Lipshitz surjections $\{\pi_{U_i}:X \rightarrow U_i\}_{i \in I}$ satisfying certain conditions, see \cite{HHS2}. The constant $E$ is called the \emph{HHS constant}, it quantifies the ``coarseness" of the space and it bounds the error to which certain statements/inequalities fail to hold. As shown in \cite{HHS2}, HHSes are coarse median spaces.

 As we will describe in Section \ref{subsec:fundemental_HHSes}, the collection of hyperbolic spaces $\calU=\{U_i\}_{i \in I}$ and maps $\pi_{U_i}:X \rightarrow U_i$ describe the entire coarse geometry of an HHS. For instance, the celebrated \emph{distance formula} theorem \cite{masurminsky:geometry:1}, \cite{masurminsky:geometry:2}, \cite{HHS2} states that the distance between a pair of points $x,y \in X$ is roughly the sum of the distances between $\pi_U(x),\pi_U(y)$, where the sum is taken over all $U \in \calU$ where the $\dist(\pi_U(x), \pi_U(y))$ is substantial (at least $E$).

Since HHSes are coarse median spaces, we can talk about median convexity and median paths we have just described. Subsets $A \subseteq X$ which are $K$-median convex with $K=E$, the HHS constant, will simply be referred to as \emph{median convex} sets as the constant is uniform. Similarly, a \emph{median path} is an $E$-median path. Finally, up to increasing $E$ by a uniform amount depending only on $X$, the \emph{median hull} of a set $A$ can be defined to be the intersection of the $E$-neighborhoods all $E$-median-convex sets containing $A.$ It is worth noting that there are various other definitions for median hulls and convex hulls in HHSes and coarse median spaces, however, thanks to \cite{RST18} and \cite{Bowditch2019CONVEXITY}, all such notions are equivalent and one can simply operate with the previous definition.




Now we are ready to state the cubical approximation theorem.

\begin{theorem} [{\cite[Theorem F]{HHS_quasi}}] \label{thm:cubical_approximation}
    Given an HHS $X$, there exists a constant $v$ such that the following holds. For any finite $F\subset X$, there exists a constant $K=K(|F|, X)$ such that $\hull(F)$ is $K$-median-quasiisometric to a CAT(0) cube complex $Q$ with $\dim(Q) \leq v.$
\end{theorem}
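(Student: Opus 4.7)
The plan is to build the cube complex $Q$ by combining a finite collection of walls on $\hull(F)$, one ``family'' of walls per \emph{relevant} hyperbolic space in the HHS structure, and then to dualize via Sageev's construction. The strategy follows Bowditch's tree-like philosophy: in each relevant coordinate $U \in \calU$, approximate the projection of $\hull(F)$ by a tree, turn the edges of that tree into walls of $\hull(F)$, and then glue everything together through the HHS machine.

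First I would isolate, via the distance formula, the set of \emph{relevant} domains
\[
\mathrm{Rel}(F) = \{\,U \in \calU \;:\; \diam(\pi_U(F)) > \theta\,\},
\]
where $\theta$ is a threshold chosen sufficiently large relative to the HHS constant $E$. Standard HHS finiteness (a consequence of the distance formula and the passing-up axiom) gives $|\mathrm{Rel}(F)| \le N(|F|, X)$. On each relevant $U$, which is $E$-hyperbolic, the projection $\pi_U(\hull(F))$ has thin, tree-like hull; by Gromov's tree approximation there is a finite tree $T_U$ with $O_{|F|,E}(1)$ branch-points and a coarse isometry $\pi_U(\hull(F)) \to T_U$. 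I then pick a discrete collection of midpoints of edges of $T_U$, each giving a partition $T_U = T_U^+ \sqcup T_U^-$, and pull back through $\pi_U$ to obtain a wall $h$ of $\hull(F)$ with half-spaces $h^\pm = \pi_U^{-1}(T_U^\pm) \cap \hull(F)$. Taking all such walls over all $U \in \mathrm{Rel}(F)$ produces a finite wallspace $(\hull(F), \calW)$ of cardinality bounded in terms of $|F|$ and $X$.

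Next I would feed $(\hull(F),\calW)$ into Sageev's construction to produce a CAT(0) cube complex $Q = Q(F)$, and define the comparison map $\Phi \colon \hull(F) \to Q$ by sending $x$ to the canonical ``principal'' ultrafilter of half-spaces containing $x$. Two verifications remain. For the quasi-isometry, the chain-length of walls separating $x$ from $y$ equals $\dist_Q(\Phi(x),\Phi(y))$ by construction of $Q$; on the other hand, that chain-length is comparable to $\sum_{U \in \mathrm{Rel}(F)} \dist_U(\pi_U(x), \pi_U(y))$ because in each tree $T_U$ we selected roughly $\dist_U(\pi_U(x), \pi_U(y))$ separating midpoints, which by the distance formula is comparable to $\dist_X(x,y)$. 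The median property of $\Phi$ follows because in each $U$, the tree $T_U$ inherits the median (tree-median) compatible with the Gromov product, and the HHS coarse median of three points in $X$ is computed coordinate-wise (up to $E$) in the $\calU$-coordinates, which matches the coordinate-wise median in $Q$.

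The hard part, and the source of the uniform dimension $v$, is bounding the number of pairwise crossing walls independently of $F$. Two walls $h_U, h_V$ coming from distinct domains can cross in $Q$ only when their half-spaces pairwise meet in $\hull(F)$; using the bounded geodesic image axiom and the behaviour of projections, this essentially forces the domains $U, V$ to be $\perp$-related (or at least not nested in a way that would force a consistent side). Iterating, a collection of pairwise crossing walls yields a collection of pairwise orthogonal (or pairwise transverse in an orthogonal sense) domains, whose size is bounded by the complexity/asymptotic rank of the HHS — a constant $v$ depending only on $X$. Walls from a single $U$ cannot pairwise cross because $T_U$ is a tree. Combining these two points gives $\dim(Q) \le v$. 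I expect this orthogonality-to-dimension bookkeeping, together with showing the constructed walls genuinely form a wallspace (finite separation for any two points), to be the main technical obstacle; all other steps reduce either to the distance formula or to standard Sageev-dualization arguments.
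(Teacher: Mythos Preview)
Your proposal is correct and follows the same high-level strategy sketched in the survey: isolate the finitely many relevant domains $\mathrm{Rel}(F)$, replace each $\hull(\pi_U(F))$ by a finite tree $T_U$ via Gromov approximation, and then combine these trees to build $Q$. The survey (following Behrstock--Hagen--Sisto) phrases the combining step slightly differently: rather than pulling back tree-edge walls to $\hull(F)$ and running Sageev, they describe $Q$ directly as the consistent (realizable) subspace of the product $\prod_{U\in\mathrm{Rel}(F)} T_U$, using the HHS realization/consistency machinery. The two descriptions are dual to one another, but the product-of-trees viewpoint makes the median compatibility essentially automatic (the median on $Q$ is coordinatewise in the $T_U$'s) and lets the dimension bound fall out of the fact that a cube in $\prod T_U$ has dimension at most the number of pairwise-orthogonal domains, which is bounded by the HHS complexity. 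Your Sageev route gets to the same place, but your crossing-walls-to-orthogonality step needs a bit more care: walls from transverse (not orthogonal) domains can cross, and ruling out large families of such crossings requires the passing-up/large-links arguments rather than a direct appeal to orthogonality.
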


A particularly interesting interpretation of the previous theorem is that HHSes (whatever they are) can be thought of as higher dimensional hyperbolic spaces: in the same way median hulls of finite sets of points in hyperbolic spaces are quasi-isometric to trees, median hulls of HHSes are median quasi-isometric to ``higher dimensional trees", i.e. CAT(0) cube complexes. One reason CAT(0) cube complexes can be thought of as higher dimensional trees is that a CAT(0) cube complex is of dimension 1 if and only if it is tree. It is worth noting that in \cite{Durham-Zalloum22}, the authors extend the above theorem to include points from the ``boundary" of $X,$ that is, the finite set $F$ doesn't need to only contain points from $X$, but it can also include median rays.

In \cite{HHP}, Haettel, Hoda and Petyt exploit Theorem \ref{thm:cubical_approximation} to export the $\dist_{\infty}$-injective distance from the approximating cube complexes $Q$ to the underlying HHS $X.$ Since their construction is important for this survey, we shall record their theorem and briefly discuss the proof.

\begin{theorem} [{\cite[Theorem A]{HHP}}]\label{thm:HHP} Let $(X, \dist)$ be an HHS. There exists a coarsely injective metric $\rho_\infty$ on $X$ which is quasi-isometric to $\dist.$
\end{theorem}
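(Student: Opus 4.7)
The plan is to transfer the injective metric $\dist_\infty$ from the cubical approximations provided by Theorem~\ref{thm:cubical_approximation} to the HHS $X$ itself. The key numerical ingredient is that on any CAT(0) cube complex of dimension at most $v$, the combinatorial metric $\dist$ and the injective metric $\dist_\infty$ satisfy $\dist_\infty \leq \dist \leq v\cdot\dist_\infty$, since any antichain of pairwise crossing hyperplanes has size at most $v$, while every cube $[0,1]^n$ with $n\leq v$ has $\ell^1$-diameter $n$ and $\ell^\infty$-diameter $1$. Hence any metric on $X$ built by pulling back $\dist_\infty$ through a cubical approximation of dimension $\leq v$ will automatically be quasi-isometric to $\dist$, so the whole difficulty is to stitch the approximations together into a single coarsely injective metric on all of $X$.

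A clean way to package the construction is through a \emph{coarse Helly} criterion: a geodesic metric space admits a coarsely injective metric quasi-isometric to its original metric precisely when there is a constant $D$ such that any finite family of balls $\{B(x_i,r_i)\}$ satisfying $\dist(x_i,x_j)\leq r_i+r_j+D$ for all $i,j$ admits a point $y$ with $y\in\N_D(B(x_i,r_i))$ for every $i$. One then invokes Lang's injective hull machinery \cite{LANG2013} to produce $\rho_\infty$ from this property. I would therefore reduce Theorem~\ref{thm:HHP} to verifying the coarse Helly property for $(X,\dist)$ with a uniform constant $D$ depending only on the HHS structure.

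To verify coarse Helly, take a finite family $\{B(x_i,r_i)\}$ as above, include the centers $\{x_i\}$ in a finite set $F$, and use a cubical approximation $f_F : \hull(F) \to Q_F$ with $\dim Q_F\leq v$. The points $f_F(x_i)$ carry balls in $(Q_F,\dist)$ that still pairwise coarsely intersect, and after passing to $(Q_F,\dist_\infty)$ via the dimension rescaling above, they are pairwise intersecting balls in an injective metric space. The exact Helly property in $(Q_F,\dist_\infty)$ produces a common point $q\in Q_F$, and pulling $q$ back to $X$ via a coarse inverse of $f_F$ gives the sought coarse common point. The dimension bound $v$ is what keeps this translation between $\dist$-balls and $\dist_\infty$-balls uniform across all $F$.

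The main obstacle is that the quasi-isometry constant $K(|F|)$ in Theorem~\ref{thm:cubical_approximation} a priori depends on $|F|$, so the naive argument only yields a coarse Helly constant depending on the size of the family of balls, whereas we need $D$ uniform in the input. Resolving this is the substantive content of the theorem, and I expect it requires strengthening the cubical approximation step itself: one needs an approximation whose constants depend only on the HHS structure (namely on $E$ and on $v$), not on $|F|$. Such a refinement is available via the median-algebra approach of Bowditch and subsequent work in the HHS literature, relying on the fact that hulls in coarse median spaces of finite rank have uniformly finite ``width'' that controls the approximation constants. With a uniform-constant cubical approximation in hand, the Helly argument above runs with a uniform $D$, and Lang's machinery then delivers the coarsely injective metric $\rho_\infty$ quasi-isometric to $\dist$.
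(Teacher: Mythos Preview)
Your reduction to a coarse Helly property for $(X,\dist)$-balls is legitimate, and you correctly identify the $|F|$-dependence in Theorem~\ref{thm:cubical_approximation} as the genuine obstacle. However, your proposed resolution---invoking a uniform-constant cubical approximation from elsewhere in the literature---defers the entire content of the theorem to an unspecified black box; and this is \emph{not} how the argument in \cite{HHP}, as sketched in the paper, actually proceeds.

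The route taken in the paper sidesteps the $|F|$-obstacle altogether by separating the two ingredients. Rather than pulling back $\dist_\infty$ from approximating cubes, one \emph{defines} $\rho_\infty$ intrinsically on $X$ (following Bowditch \cite{BOWDITCH2018}) by
\[
\rho_\infty(x,y)\;:=\;\sup\big\{\,|f(x)-f(y)|\;:\;f:X\to\mathbb{R}\text{ is a coarse median contraction}\,\big\}.
\]
Cubical approximations are then invoked only to show that $\rho_\infty$ is quasi-isometric to $\dist$; since this is a statement about pairs of points, Theorem~\ref{thm:cubical_approximation} with $|F|=2$ suffices and the constants are uniform. The coarse Helly property for $(X,\rho_\infty)$ is established by an entirely different mechanism: one uses the coarse Helly property already available for metric balls in each hyperbolic space $U\in\calU$ and assembles these across the hierarchy. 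Neither step requires uniformity in $|F|$.

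So your plan and the paper's share the same philosophy---export $\dist_\infty$ from cubes---but the paper's execution is more indirect and precisely for that reason avoids the gap you identified. Your direct approach would work only after proving a uniform cubical approximation theorem, which is itself a substantial result and not a consequence of Theorem~\ref{thm:cubical_approximation} as stated.
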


Recall that a geodesic metric space is said to be \emph{coarsely injective} if there exists a constant $\delta$ such that for any collection of pair-wise intersecting balls $\{B(x_i,r_i)\}_{i \in I}$ there exists a point $x \in \cap_{i \in I} B(x_i, r_i+\delta)$. The primary example to be kept in mind for these are hyperbolic spaces, see Theorem 6.1 in \cite{Breuillard2021} and Proposition 1.1 in \cite{HHP}.

\vspace{2mm}

\subsubsection{A brief summary of the proof of Theorem \ref{thm:HHP}}

Let $(X, \dist)$ be an HHS with $\calU=\{U_i\}_{i \in I}$ as its collection of hyperbolic spaces. Inspired by Bowditch \cite{BOWDITCH2018}, they define $$\rho_\infty(x,y):=\text{sup}\{|f(x)-f(y)|\,:\, f:X \rightarrow \mathbb{R} \text{ is a coarse median contraction} \},$$ where a coarse median contraction is a map $f:X \rightarrow \mathbb{R}$ that coarsely decreases distances and it roughly preserves medians. They then rely on cubical approximations (Theorem \ref{thm:cubical_approximation}) to show that $(X,\rho_\infty)$ is quasi-isometric to $(X,\dist)$, and on the coarse Helly property for metric balls in the hyperbolic spaces $U \in \calU$ to establish an analogous Helly property for metric balls in $(X, \rho_\infty),$ concluding the proof.

The main ingredient of their new distance $\rho_\infty$ is the notion of a coarse median contractions $f:X \rightarrow \mathbb{R}.$ Observe that when $X$ is a CAT(0) cube complex and the maps $f:X \rightarrow \mathbb{R}$ in the definition of $\rho_\infty$ above are taken to be median contractions (i.e., with no coarseness), then we have $\dist_\infty=\rho_\infty$. To see this, recall that $\dist_\infty(x,y)$ on a CAT(0) cube complex is the cardinality of the maximal chain of hyperplanes separating $x,y$, and such hyperplanes are exactly the fibers $\{f^{-1}(m)\}_{m \in \mathbb{Z}}$ of the median contraction $f:X \rightarrow \mathbb{R}$ sending $x,y$ as far as possible to $\mathbb{R}$, see Figure \ref{fig:contractions}.  This remark that hyperplanes in CAT(0) cube complexes can be seen as pre-images of median contractions $f:X \rightarrow \mathbb{R}$ allows us to make sense of a hyperplane in any coarse median space $X$, namely, as a pre-image of a point (or an interval) under a coarse median contractions $f:X \rightarrow \mathbb{R};$ an observation that was made by Petyt, Spriano and the author. In fact, one can make the following more general remark:

\begin{figure}[ht]
    \centering
    \begin{minipage}{0.45\textwidth}
        \centering
        \includegraphics[width=7cm, trim = .01cm 5cm 5cm 5.7cm]{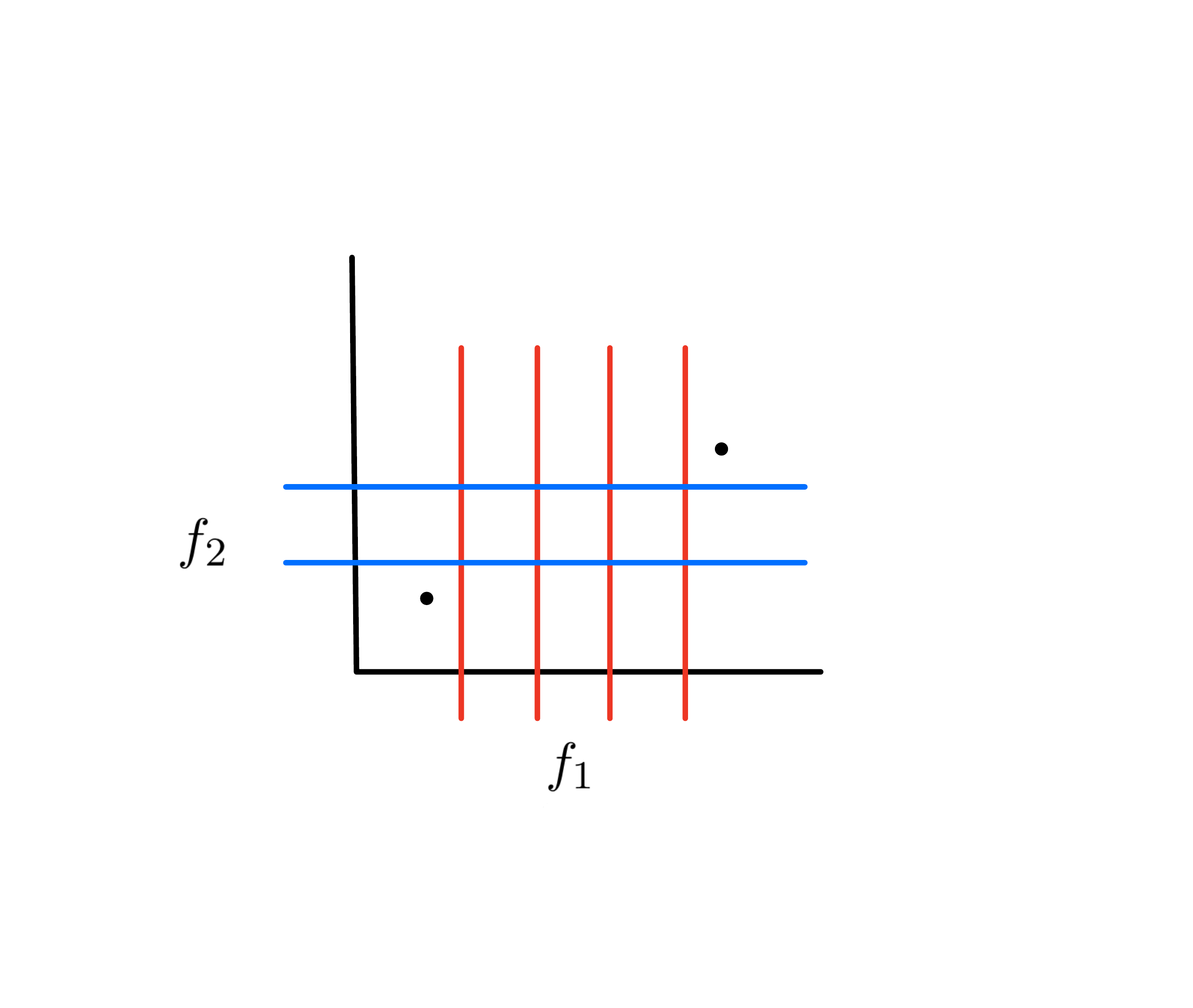} 
        \caption{The two points in each of the pictures are $x=(1,1)$, $y=(5,3)$ which differ on 4 vertical hyperplanes and 2 horizontal ones. The picture on the left shows the two obvious median contractions $f_1,f_2$ which are nearest point projections to the $x$ and $y$ axes; $|f_1(x)-f_1(y)|=4$ while $|f_2(x)-f_2(y)|=2$. The picture on the right shows a third median contraction $f$ which is $f_1$ followed by compressing the $x$-axis so that $|f(x)-f(y)|=1$. Among such three contractions, the optimal one is $f_1$.}\label{fig:contractions}
    \end{minipage}\hfill
    \begin{minipage}{0.45\textwidth}
        \centering

        \includegraphics[width=7cm, trim = .01cm 5cm 5cm 5.7cm]{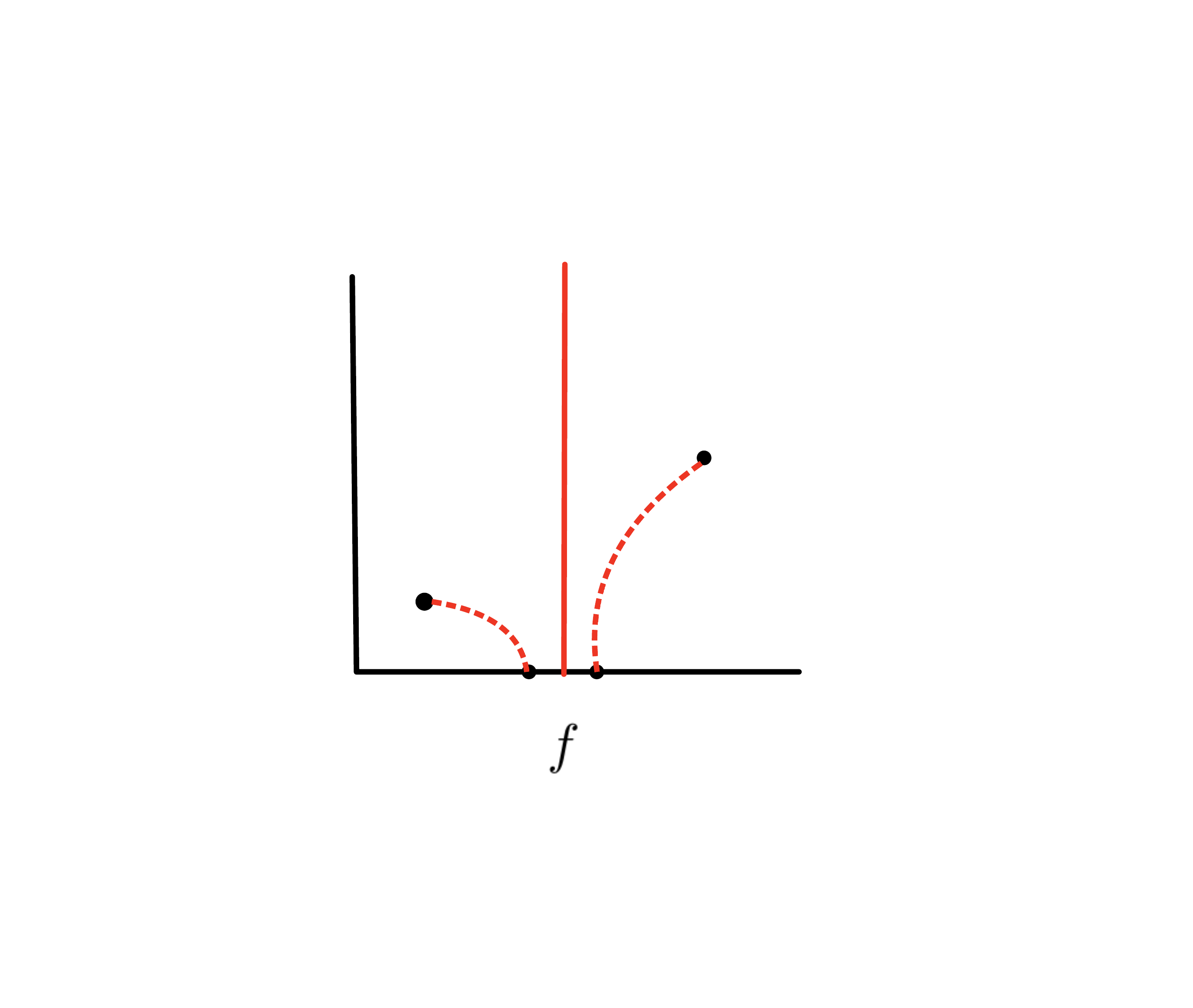} 
    \end{minipage}
\end{figure}

\vspace{2mm}

\subsubsection{Main observation}
Inspired by the above, Petyt, Spriano and the author observed the following. Whenever you have a ``well-behaved" contraction $f$ from an arbitrary space $X$ onto $\mathbb{R}$, or more generally, onto a CAT(0) cube complex $Q$, you can attempt to define ``curtains" or ``hyperplanes" in $X$ as pre-images of hyperplanes in $Q.$ The more ``well-behaved" the surjection $f$ is, the stronger properties these hyperplanes enjoy. Namely, the quality of the surjection $f$ dictates the quality of the resulting fibers. For instance, if $X$ is a CAT(0) cube complex, and $f:X \rightarrow \mathbb{R}$ is a \emph{median} map, then fibers of hyperplanes $f^{-1}(p_i)$ --with $p_i$ a point, i.e., a hyperplane in $\mathbb{R}$-- are themselves convex.

\subsubsection{Examples where this observation applies}\label{subsub:examples_where_observation_applies}
Here are a few examples of the above phenomenal:

\begin{example} \label{ex:First_Example}(Hyperbolic and hierarchically hyperbolic spaces) Motivated by the above, if $X$ is an arbitrary metric space and $f_0:X \rightarrow Y$ is a coarse surjection to a hyperbolic space $Y$, then, one obtains a coarse surjection to a cube complex of dimension 1 as follows: compose $f_0$ with the nearest point projection map $\pi_l:Y \rightarrow l$ to a geodesic line $l$ in $Y$ (doesn't need to be a line, but it's easier to imagine) to obtain $f_l=\pi_l \circ f_0:X \rightarrow l \cong \mathbb{R}$. One can now define the ``$l$-hyperplanes" as the fibers $\{h^l_n=f_l^{-1}([n, n+10\delta])\}_{n \in \mathbb{Z}},$ and ``hyperplanes" are simply $l-$hyperplanes for some geodesic line $l \subset Y.$

\end{example}

The following remark highlights further connections between CAT(0) cube complexes and metric spaces that come with surjections onto hyperbolic spaces.

\begin{remark} (Hyperplanes versus unparameterized quasi-geodesics) Here is a fun and easy to see fact. Continuing with Example \ref{ex:First_Example} above, it is immediate that a path $\alpha \subset X$ projects to an unparameterized quasi-geodesic in $Y$ if and only if $\alpha$ never crosses the same hyperplane in $X$ twice, modulo some constants. Namely, since hyperplanes were defined to be pre-images under $f_l:X \rightarrow l$ of ``thick" intervals $I=[n,n+10\delta] \subset l \subset Y,$ crossing a hyperplane $h^l_n=f_l^{-1}([n, n+10\delta])$ twice (by which we mean starting with a half space $(h^l_n)^+$, entering the other half space $(h^l_n)^-$ and returning to $(h^l_n)^+$) implies backtracking along $l$ for a long time; the converse is slightly trickier but it can be easily established by median considerations, see the proof to item 2 of Lemma \ref{lem:sample_append}. For a reader who is familiar with the language of HHSes, the previous observation will be used to show that a quasi-geodesic in an HHS is a hierarchy path if and only if it never crosses a hyperplane twice (modulo some constants), where hyperplanes will be defined in essentially the same way as in the previous example (we will take every geodesic of every hyperbolic space $U \in \calU$ and define hyperplanes analogously).

\end{remark}

One class where surjections to CAT(0) cube complexes are particularly abundant is the class of CAT(0) spaces.

\begin{example}(CAT(0) spaces) A fundamental and almost characteristic property of a CAT(0) space $X$ is the existence and uniqueness of nearest point projections to convex sets. In particular, for a geodesic $\alpha \subset X$, there is 1-Lipshitz map $\pi_\alpha:X \rightarrow \alpha$ assigning to each $x \in X$ its closest point along $\alpha.$ This provides a natural collection of ``hyperplanes" called \emph{curtains} which are simply fibers $\pi_\alpha^{-1}(I)$ of unit intervals $I \subset \alpha$. 



\end{example}

  A good class of examples to keep in mind where the above observation applies is the collection of spaces admitting a \emph{strongly contracting} geodesic line. 
    
    \begin{definition}\label{def:Strongly_contracting} A geodesic line $l$ is said to be $D$-strongly contracting if the nearest point projection of any ball $B$ disjoint from $l$, denoted $\pi_l(B),$ has diameter at most $D.$ 

        \end{definition}

Essentially by the contra-positive of the above definition, one can easily prove that if $l$ is $D$-strongly contracting and $x, y\in X$ with $\dist(\pi_l(x), \pi_l(y))>5D$, then the subsegment of $l$ lying between $\pi_l(x), \pi_l(y)$ is contained in in the $6D$-neighborhood of any geodesic $\alpha$ connecting $x,y.$

    \begin{example} (Contracting geodesics)
    Given a strongly contracting geodesic line $l$, the nearest point projection provides a coarse surjection to a cube complex $\pi_l:X \rightarrow \mathbb{R}$ whose fibers $\{h_i:=\pi_l^{-1}(n)\}_{n \in 20D\mathbb{Z}}$ can be thought of ``hyperplanes" in $X.$ These ``hyperplanes" dissect the space in an efficient and convex manner, namely, observe that by the remark following Definition \ref{def:Strongly_contracting} above, if $\alpha$ crosses $h_i,h_{i+1}$, then it must spend its time between $h_i,h_{i+1}$ near $l$. In particular, the fact that $\alpha$ is a geodesic implies that $\alpha$ can't go back to cross $h_{i+1}, h_i.$ An alternative way of stating this is: every geodesic $\alpha \subset X$ projects to some $(D',D')$-unparameterized-quasi-geodesic $\pi_l(\alpha)$ in $\mathbb{R},$ where $D'$ depends only on $D.$

\end{example}

\section{Why care about general curtains? A sample lemma}\label{sec:why_care_intro} We have discussed three fundamental examples where curtains exist, namely, spaces with a strongly contracting geodesic, CAT(0) spaces, hyperbolic and hierarchically hyperbolic spaces. In this section, we explore the utility of such curtains in the context of hyperbolic and hierarchically hyperbolic spaces (although, notably, most of our constructions and statements solely involve the coarse median structures on these spaces \cite{Bowditch13}). For CAT(0) spaces, their utility is described in \cite{PSZCAT}, in particular, these curtains and their \emph{curtain model} (a $\delta$-hyperbolic space constructed via curtains) allow for vast extensions of theorems known in mapping class groups and CAT(0) cube complexes to that of CAT(0) spaces including an Ivanov-stlyle rigidity theorem, a dichotomy of a rank-rigidity flavor and the presence of a universal hyperbolic space for rank-one elements; a statement that wasn't known even for proper cocompact cube complexes.

Finally, these curtains also seem to provide some  powerful constructions in the context of spaces containing a strongly contracting geodesic (e.g., injective metric spaces \cite{Sisto-Zalloum-22} and Garside Groups \cite{Garsidestrong}) but the full picture is still being investigated by Petyt, Spriano and the author in some work-in-progress.



In the previous section, we have hinted at how one can define curtains in a geodesic metric space $X$ which come with a surjection to a CAT(0) cube complex $\pi_Y:X \rightarrow Y,$ namely, as preimages of hyperplanes in $Y$ under $\pi_Y$.

We have also discussed how this applies to spaces that come with surjections to hyperbolic spaces such as HHSes. Namely, if $X$ is an HHS with a collection of hyperbolic spaces $\calU=\{U_i\}_{i \in I}$ and maps $\{\pi_{U_i}: X \rightarrow U_i\}_{i \in I}$, then a reasonable definition of a curtain could be $(\pi_U \circ \pi_l)^{-1}(I)$ where $U \in \calU$ is a hyperbolic space, $l \subset U$ is a geodesic line, and $I \subset l$ is a (perhaps thick) interval. Furthermore, if we would like for these ``curtains" to be median convex, one can instead take the median hull of the set $(\pi_U \circ \pi_l)^{-1}(I)$.

The above subsets will be our primary source of curtains, however, in order to keep the definition as compact and flexible as possible, we choose to define HHS curtains as follows.
\begin{definition}(HHS curtains) Let $X$ be an HHS and let $E$ be the HHS constant. An HHS \emph{curtain} is defined to be a median convex set $h$ such that $X-h \subseteq h^+ \sqcup h^-$ where $h^+, h^-$ are disjoint, median-convex sets with $\dist(h^+, h^-) \geq 10E.$ See Figure \ref{fig:curtain}. 
\end{definition}

  \begin{figure}[ht]
   \includegraphics[width=8cm, trim = 1cm 7cm 1cm 8cm]{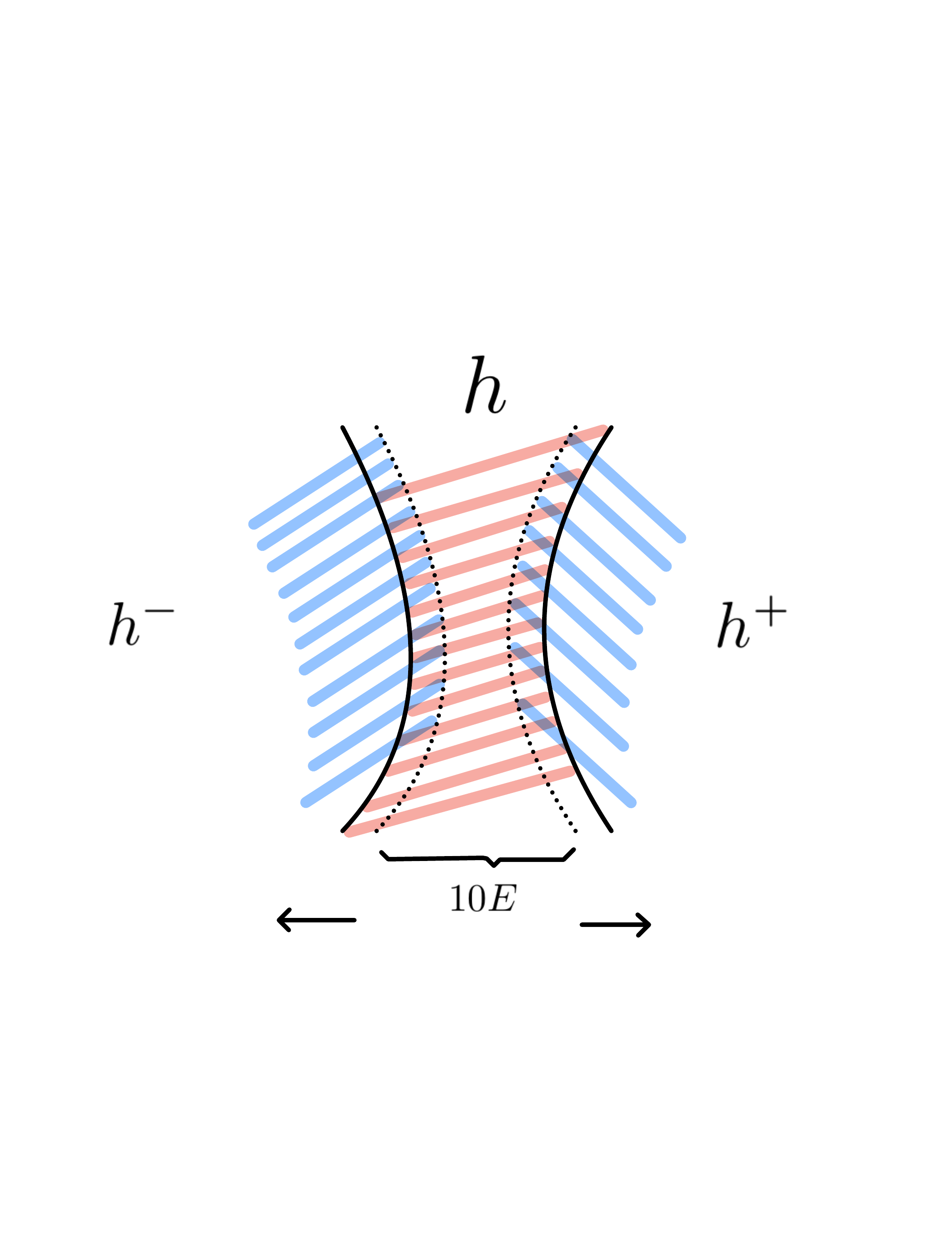}\centering
\caption{A curtain along with its half spaces.} 
\label{fig:curtain}
\end{figure}

It's immediate from the definition that any action of a group $G$ on $X$ preserving medians must take curtains to curtains. The sets $h^+,h^-$ are called \emph{half spaces} for $h$ and a path $\alpha$ is said to \emph{cross} $h$ if the sets $\gamma \cap (h^+-h), \gamma \cap (h^--h)$ are both non-empty. Our primary examples of HHS curtains will come from the maps $\pi_U: X \rightarrow U$ where $U \in \calU$. Namely, it is immediate to check that the set $\hull[(\pi_U \circ \pi_l)^{-1}(I)]$ is an HHS curtain where $U, I, l$ as in the previous paragraph.

Recall that median convex sets in HHSes satisfy the \emph{coarse Helly property} \cite{HHP}. That is, if a family of median convex sets $\{C_i\}_{i \in I}$ satisfy $C_i \cap C_j \neq \emptyset$ where at least one $C_i$ is bounded or $|I|<\infty$, then there is a point $x \in X$ that lives in the intersection $\cap_{i \in I} N(C_i,E)$, where $N(C_i,E)$ denotes the $E$-neighborhood of $C_i$. In particular, this property applies to the above curtains and their half spaces when $I$ is finite. If $I$ is not finite, then the property still holds, but one needs to assure that there is a constant $K$ such that for all $i \neq j$ the diameter of $ C_i \cap C_j$ is bounded above by $K.$

The following lemma is extracted from some forthcoming work of Petyt, Spriano and the author. It is worth noting that our framework there is more general than HHSes and relies only the median structure of the space $X.$ However, for the purpose of completeness and self containment of the survey, in Appendix A, we will provide a different proof for the statement relying on the hierarchical structure of $X.$

\begin{lemma}[Petyt-Spriano-Zalloum]\label{lem:intro_lemma} Let $X$ be an HHS, up to increasing the HHS constant $E$ by a uniform amount, the following hold for any $x,y,z \in X.$

\begin{enumerate}

\item The distance $\dist(x,y)$ coarsely agrees with the size of a maximal chain of curtains separating $x,y.$ That is, $\dist(x,y) \underset{E}{\asymp} \text{max}\{|c|\, :\, c \text{ is a chain of curtains separating }x,y
\}.$

    \item A quasi-geodesic $\alpha$ is a hierarchy path (equivalently, a median path) if and only if it never crosses the same curtain twice.

    \item The median $\mu(x,y,z)$ is coarsely the unique point that lies in the intersection of the $E$-neighborhoods of all half spaces containing the majority of $x,y,z.$

    \item For any set $A$, the median hull of $A$ is coarsely the intersection of the $E$-neighborhoods of all half spaces properly containing $A.$

\end{enumerate}
    
\end{lemma}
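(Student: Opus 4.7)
The overall plan is to reduce each of the four assertions to its cubical counterpart by combining the cubical approximation theorem (Theorem~\ref{thm:cubical_approximation}) with the coarse Helly property for median-convex sets in HHSes. Given any finite $F \subset X$, Theorem~\ref{thm:cubical_approximation} furnishes a $K$-median quasi-isometry $f_F \colon \hull(F) \to Q_F$ onto a CAT(0) cube complex of bounded dimension, with $K$ depending only on $|F|$; halfspaces in $Q_F$ are median-convex and separating, and these properties transport back to $X$ via $f_F$ to produce curtains whose quality depends only on $K$. Throughout we use the statement's allowance to increase $E$ by a uniform amount, which absorbs the coarseness introduced by the quasi-isometry $f_F$.

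For (1), the upper bound is essentially tautological: given a chain $h_1,\ldots,h_n$ separating $x$ and $y$, any geodesic crosses each of the $n-1$ gaps of width at least $10E$, so $\dist(x,y) \geq 10E(n-1)$. For the lower bound, apply Theorem~\ref{thm:cubical_approximation} to $F=\{x,y\}$ (so that $K$ is uniform) to obtain $f \colon \hull(\{x,y\}) \to Q$, pick a maximal chain of hyperplanes $\hat h_1,\ldots,\hat h_n$ in $Q$ separating $f(x)$ and $f(y)$ with $n \asymp \dist(x,y)$, and pull back each halfspace to $X$ by assigning $z \in X$ to the side of $\hat h_i$ containing $f(\mu(x,y,z))$. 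Taking every $\lceil K/E \rceil$-th hyperplane in the chain, the resulting halfspaces in $X$ are $10E$-separated and median-convex (after a uniform thickening), producing a chain of curtains of size comparable to $\dist(x,y)$.

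For (2), the forward direction uses that a hierarchy path $\alpha$ projects to a uniform unparameterized quasi-geodesic in every $U \in \calU$. Our canonical curtains have the form $h=\hull((\pi_U \circ \pi_l)^{-1}(I))$ for a thick interval $I$ along a geodesic $l \subset U$; crossing $h$ twice forces $\pi_U(\alpha)$ to enter, leave, and re-enter a long interval, contradicting the unparameterized quasi-geodesic property once $|I|$ exceeds the hyperbolicity constant. Conversely, if $\alpha$ is not a hierarchy path, the distance formula supplies a $U$ where $\pi_U(\alpha)$ backtracks substantially, and choosing $l$ and $I$ to straddle the backtrack produces an explicit curtain traversed twice by $\alpha$.

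For (3) and (4), combine (1) with coarse Helly. For (3), any halfspace $h^+$ containing the majority of $\{x,y,z\}$ coarsely contains $\mu(x,y,z)$ because $h^+$ is median-convex and the coarse median of three points of a median-convex set lies within $E$ of it; thus $\mu(x,y,z)$ is in the claimed intersection. Coarse uniqueness follows from (1): any point $p$ far from $\mu(x,y,z)$ is separated from it by some curtain $h$, whose halfspace containing $\mu(x,y,z)$ must contain the majority, excluding $p$ from the intersection. Part (4) is analogous: every halfspace properly containing $A$ contains $\hull(A)$ up to an $E$-error by median-convexity, and conversely any $p \notin N(\hull(A),E')$ is separated from a coarse nearest point of $\hull(A)$ by a curtain supplied by (1), whose halfspace containing that nearest point also contains $A$ by median-convexity. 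The main technical obstacle throughout is calibrating the constants so that the curtains produced by pulling back cubical hyperplanes satisfy the $10E$-separation uniformly; this is precisely where the flexibility to enlarge $E$ is crucial.
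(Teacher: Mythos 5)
Your route for item (1) --- pulling a hyperplane chain back from the cubical approximation of $\hull(\{x,y\})$ via $z\mapsto f(\mu(x,y,z))$ --- is genuinely different from the paper's, which builds the chain directly from the hierarchical structure (distance formula plus Behrstock inequality, using curtains of the form $\hull((\pi_b\circ\pi_U)^{-1}(I))$ for geodesics $b$ in the domains). Your version can be made to work, with one step made explicit: in $Q$ you need the \emph{maximal chain} of separating hyperplanes, not $\dist_Q(f(x),f(y))$, and these agree only up to the factor $\dim(Q)\le v$ (two separating hyperplanes either cross or form a $2$-chain, so a bound $k$ on chains bounds the total number of separating hyperplanes by $kv$). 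With that, and with the gate map $z\mapsto\mu(x,y,z)$ being coarsely Lipschitz and coarsely median, your spacing/thickening argument goes through.

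There is, however, a genuine gap in the forward direction of item (2). The statement quantifies over \emph{all} curtains, i.e.\ any median-convex $h$ with disjoint median-convex halfspaces at distance $\ge 10E$; your argument via unparametrized quasi-geodesic projections only rules out double crossings of the canonical curtains $\hull((\pi_l\circ\pi_U)^{-1}(I))$, and nothing reduces a general curtain to that form. The correct (and simpler) argument is purely median, and is the one the paper uses: if $\alpha(t_1),\alpha(t_3)\in h^+$ and $\alpha(t_2)\in h^-$ with $t_1<t_2<t_3$, then median convexity of $h^+$ puts $\mu(\alpha(t_1),\alpha(t_2),\alpha(t_3))$ within $E$ of $h^+$, while the median-path property puts it within $E$ of $\alpha(t_2)\in h^-$, contradicting $\dist(h^+,h^-)\ge 10E$. (Your converse is fine in spirit, but the input is the projection characterization of hierarchy paths together with a tripod argument in the relevant $U$, rather than the distance formula.)

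A second gap is in item (4): the step asserting that the halfspace containing the coarse nearest point $g$ of $\hull(A)$ to $p$ ``also contains $A$ by median-convexity'' does not follow from median convexity alone. A curtain separating $p$ from $g$ may have points of $A$ on $p$'s side or on its wall, and since $A$ may be infinite you cannot discard such curtains by counting. What is needed is the gate characterization of $g=P_{\hull(A)}(p)$, namely $\dist(\mu(p,g,a),g)\le E'$ for all $a\in\hull(A)$ (cited in the paper from Durham--Zalloum): if some $a\in A$ lay in $h^-$ together with $p$, median convexity of $h^-$ would force $g$ to be $(E+E')$-close to $h^-$, contradicting $g\in h^+$ once the constants are calibrated, and a short chain argument handles the boundedly many curtains whose walls meet $A$. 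This is precisely why the paper proves a separate gate lemma before item (4). The same edge case appears, more mildly, in your uniqueness argument for item (3): a chain element may have one of $x,y,z$ on its wall, so one must discard the at most three such curtains before running the majority argument.
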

Perhaps serving as an evidence of the canonical nature of such curtains, the proof of each item of the above relatively long statement doesn't exceed a few lines (see Appendix A for the proof). For a reader who is familiar with HHSes, it is worth noting that item 1 of Lemma \ref{lem:intro_lemma} above is merely a reflection of the distance formula combined with the Behrstock's inequality. Namely, making a large distance in a hyperbolic space $U \in \calU$ is equivalent to crossing many curtains coming from that hyperbolic space, and for two generic (transverse) hyperbolic spaces $U,V \in \calU,$ the Behrstock's inequality controls how curtains coming from $U$ interact with the ones coming from $V,$ namely, it assures that they form a chain.


\section{Marrying mapping class groups, CAT(0) groups and injective spaces}\label{sec:Marrying_intro}

\vspace{3mm}

While the tools we had for studying CAT(0) groups and mapping class groups as of five years ago were quite abundant, they were essentially mutually exclusive. 

\subsubsection{Tools that applied to CAT(0) groups but not to mapping class groups}

Unlike CAT(0) groups, a mapping class groups can't geometrically (properly coboundedly) act on CAT(0) spaces, and the only contractible space with fine (i.e., non-coarse) geometry that was known to admit an action by mapping class groups is Teichmüller space; however, such an action is not co-bounded. In short, and until very recently, we weren't aware of any fine, contractible geometric model (a space on which $G$ act geometrically) for mapping class groups. 

Furthermore, and unlike CAT(0) groups, mapping class groups weren't known to admit a geometric model where Morse (equivalently, pseudo-Anosov \cite{Behrstock_pA_are_Morse}) elements have strongly contracting quasi-axis. Having a strongly contracting quasi-axis is an extremely desirable property that is known to inform a great deal about the acting group, for instance, regarding genericity of pseudo-Anosov elements \cite{Yang2018}, growth tightness of the action \cite{Cashen2019}, acylindrical hyperbolicity, regularity of Morse geodesics \cite{EikeZalloum}, \cite{Morse-local-to-global} and many more. In fact, Rafi and Verberne show that in the obvious geometric models for mapping class groups, i.e., their Cayley graphs, axes of Morse elements can fail to be strongly contracting \cite{Rafi2021}.

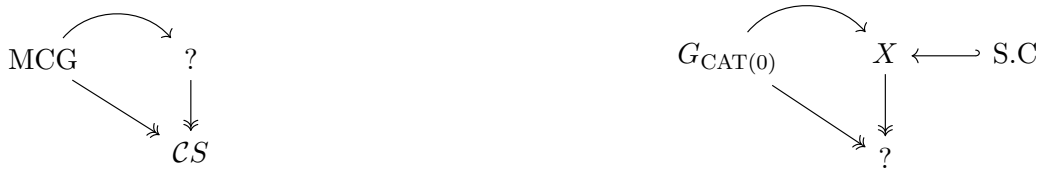
\begin{figure}[ht]
    \centering
\begin{tikzcd}  
\text{MCG} \arrow[r,bend left=50] \arrow[dr, two heads] & ?  \arrow[d, two heads]\\
 & \calC S\\
\end{tikzcd}%
    \qquad \hspace{50mm}
\begin{tikzcd}  
G_{\text{CAT(0)}} \arrow[r,bend left=50] \arrow[dr, two heads] & X\arrow[d, two heads]& \text{S.C}  \arrow[l, hook']\\
 & ?\\
\end{tikzcd}    \caption{No CAT(0)-like space for mapping class groups and no curve graph-like space for CAT(0) spaces}%
    \label{fig:before_marriage}%
\end{figure}

\subsubsection{Tools that applied to mapping class groups but not to CAT(0) groups}
On the other hand, unlike mapping class groups with their curve graphs, CAT(0) groups - including the cubulated ones - weren't known to admit an action on a hyperbolic space which is universal with respect to its Morse (equivalently, rank-one \cite{charneysultan:contracting}, \cite{Bestvina2009}) elements. Cubulated groups were known to act on many hyperbolic spaces with a great effect, including the \emph{contact graph} of Hagen and the \emph{separation space} of Genevois, however, neither of these is universal with respect to \emph{all} Morse elements of $G$, see Figure \ref{fig:before_marriage}.

\subsubsection{Marrying the tools} Over the past few years, it was discovered by \cite{HHP} that mapping class groups (and more generally HHGs) enjoy geometric actions on fine contractible metric spaces which are very close analogues of CAT(0) spaces; namely, injective metric spaces. Their theorem allows for a wealth of statements known in the context of CAT(0) groups to export to mapping class groups (and HHGs) including semi-hyperbolicity, the equivalence between Morse and strongly contracting geodesics \cite{Sisto-Zalloum-22} as well as the presence of only finitely many conjugacy classes of finite subgroup. See Figure \ref{fig:after_marriage}.

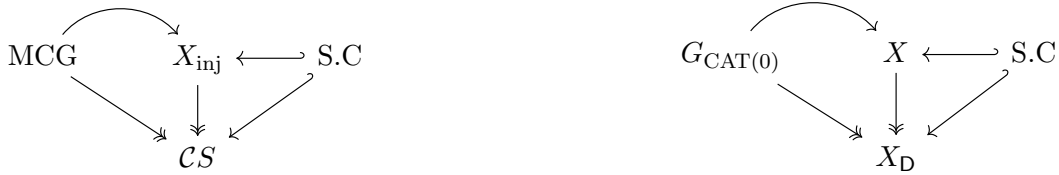
\begin{figure}[ht]
    \centering
\begin{tikzcd}  
\text{MCG} \arrow[r,bend left=50] \arrow[dr, two heads] & X_{\text{inj}}  \arrow[d, two heads]& \text{S.C}  \arrow[l, hook'] \arrow [dl, hook']\\
 & \calC S\\
\end{tikzcd}%
    \qquad \hspace{30mm}
\begin{tikzcd}  
G_{\text{CAT(0)}} \arrow[r,bend left=50] \arrow[dr, two heads] & X\arrow[d, two heads]& \text{S.C}  \arrow[l, hook'] \arrow [dl, hook']\\
 & X_{\Dist} \\
\end{tikzcd}    \caption{Mapping class groups admit a geometric action on an injective space where Morse elements have strongly contracting quasi axes, and CAT(0) groups admit a universal W.P.D action on the curtain model $X_{\Dist}$. In particular, all strongly contracting geodesics quasi-isometrically embed in $\calC S$ and $X_{\Dist}.$ }%
    \label{fig:after_marriage}%
\end{figure}

On the other hand, recent work of Petyt, Spriano and the author shows that each CAT(0) group $G$ admits an action on a very close relative of the curve graph: a $\delta$-hyperbolic space called the curtain model and denoted $\X$. Further, in analogy with mapping class groups actions on their curve graphs, the action of $G$ on the curtain model $\X$ is W.P.D \emph{universal} meaning that all Morse elements of $G$ act loxodromially on $\X$ as W.P.D isometries.

\part{Detailed discussions}



\section{ Injective metric spaces and their exceptionally tolerating geometry} \label{sec:Injective}

\subsection{Their definition and aspects of their accommodating geometry}\label{subsec:their_accomodating_geometry}

An object $X$ in a category $\calC$ is said to be \emph{injective} if if for any morphism $f:Y \rightarrow X$ and any monomorphism $i:Y \rightarrow Z,$ there is a morphism $\Tilde{f}:Z \rightarrow X$ that restricts to $f$ over $Y,$ namely, we have $\Tilde{f}\circ i=f.$ An \emph{injective metric space} is a metric space which is an injective object in the category of metric spaces with 1-Lipshitz maps. More concretely, a metric space $X$ is said to be injective if the following holds: given a metric space $Y$ and a 1-Lipshitz map $f:Y \rightarrow X$, the map $f$ extends to every metric space $Z$ where $Y$ isometrically embeds. See Figure \ref{fig:injective_def}.

\begin{figure}[ht]
    \centering
    \caption{The object $X$ is injective.}
    \label{fig:injective_def}

\begin{tikzpicture}[commutative diagrams/every diagram]
\matrix[matrix of math nodes, name=m, commutative diagrams/every cell] {
Y & Z \\
X &  \\};
\path[commutative diagrams/.cd, every arrow, every label]
(m-1-1) edge[commutative diagrams/hook] node{$i$} (m-1-2)
(m-1-1) edge node[swap] {$f$} (m-2-1)
(m-1-2) edge[commutative diagrams/dashed]node{$\tilde{f}$} (m-2-1);
\end{tikzpicture}

\end{figure}
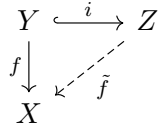

Based on the definition, injective objects of a certain category are objects where it's ``easy to map things into". In particular, for every subspace $Y$ of an injective space $X$, if $Y$ isometrically embeds into another metric space $Z,$ then $Z$ can also be mapped into $X$ via a 1-Lpshitz map. Such a universal property for $X$ allows us to perform certain constructions in an abstract metric space $Z$ containing a certain subspace $Y$ of $X$ and carry them over to our space; this is the main theme in injective spaces.


Said differently, injective metric spaces have plenty of space on the inside making their geometry very tolerating. We demonstrate this by some examples:
\vspace{2mm}

\noindent 1) \textbf{Embedding intervals}: Given two points $x,y$ in an injective metric space $(X, \dist)$, if our claim that $X$ has an abundance of space on the inside is true, there better exist a geodesic connecting $x,y$. Well, there is, and the proof follows immediately from the definition. Let $Y$ be the two points metric space $Y=\{x,y\}$ with $\dist_Y(x,y)=\dist(x,y)$. We know that $Y$ isometrically embeds in both $X$ and the interval $Z=[0, \dist(x,y)]$. Using the definition of injectivity, the isometric embedding $f:\{x,y\} \rightarrow X$ extends to $Z=[0, \dist(x,y)]$ providing a map $\Tilde{f}:[0, \dist(x,y)] \rightarrow X$ that restricts to $f$ on the end points. Since the map $\tilde{f}$ is 1-Lipshitz and restricts to $f$ on the end points $\{0, \dist(x,y)\}$, the triangle inequality implies that $\tilde{f}$ is an isometric embedding.

\vspace{2mm}

    \begin{figure}[ht]
   \includegraphics[width=14cm, trim = 1cm 9cm 1cm 9cm]{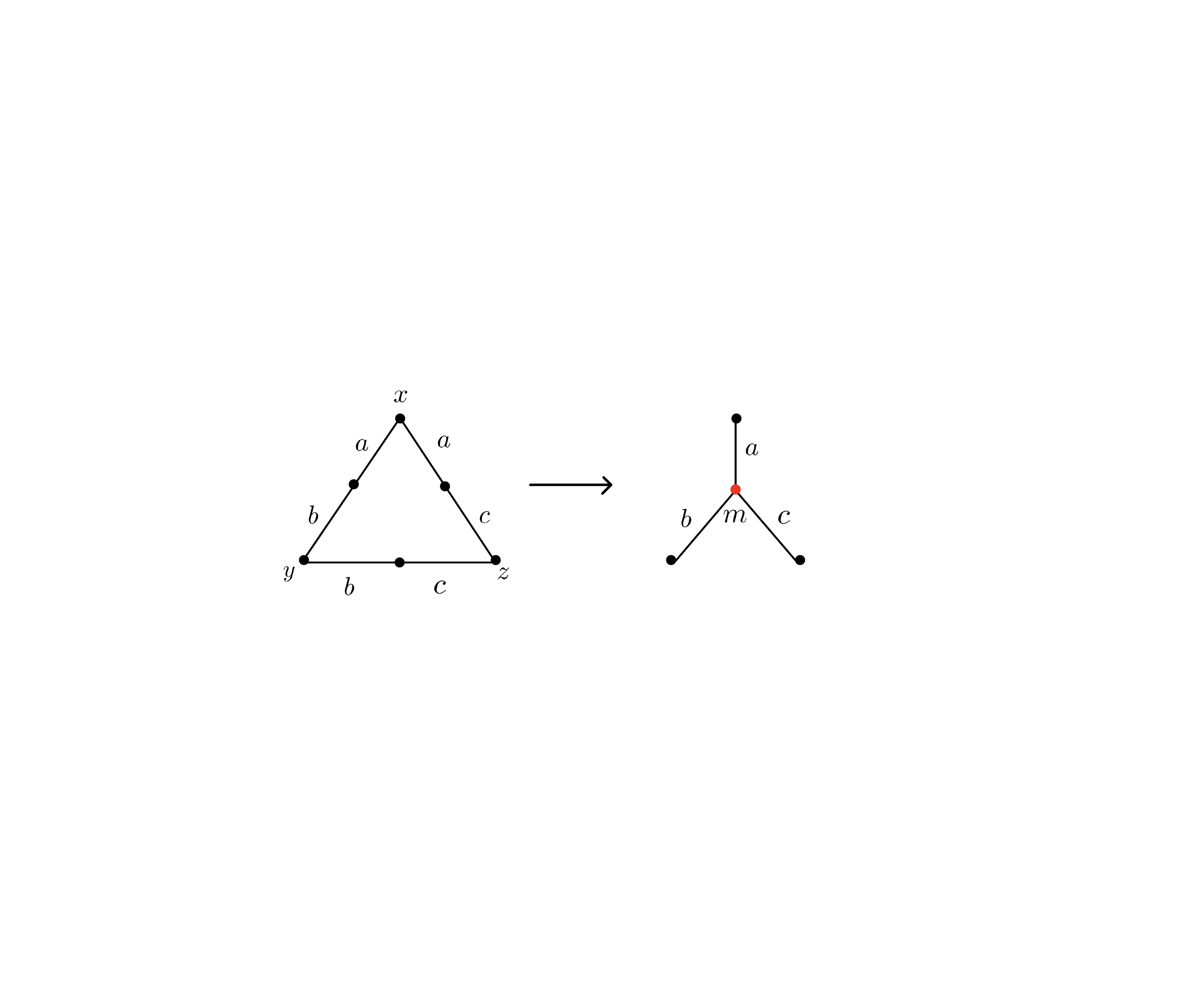}\centering
\caption{Abstract tripod associated to a geodesic triangle in an arbitrary metric space.} \label{fig:Tripod}
\end{figure}
\noindent 2) \textbf{Tripods:} Given a metric space $X$, for simplicity, let's say $X$ is geodesic (but this is not strictly required). For any three points $x,y,z \in X$, one can always find non-negative real numbers $a,b,c$ such that: 

\begin{align*}
\dist(x,y) & = a+b\\
 \dist (x,z)&=a+c \\
 \dist(y,z)&=b+c. 
\end{align*}

Namely, in any geodesic metric space $X$ and three points $x,y,z \in X$, there is a natural map from a given geodesic triangle $\Delta$ connecting $x,y,z$ to a tripod $T$ with vertices $x,y,z,m$ such that $\dist_T(x,m)=a$, $\dist_T(y,m)=b$ and $\dist_T(z,m)=c$ with $a,b,c$ as above, see Figure \ref{fig:Tripod}. However, the tripod $T$ doesn't usually isometrically embed in $X$, it's merely an abstract tripod that is mapped onto by the geodesic triangle $\Delta$. Serving as a further evidence of the tolerating nature of injective metric spaces, if $X$ is injective, then the tripod indeed isometrically embeds in $X$ (for instance, see \cite{LANG2013}). To see this, simply apply the same idea from the previous example, namely, consider the 3-point metric space $Y=\{x,y,z\}$ making the injection $f:Y \rightarrow X$ an isometric embedding, and notice that $Y$ isometrically embeds in a tripod $T$ as above. Injectivity of $X$ assures that the map $f$ extends to 1-Lipshitz map $\tilde{f}:T \rightarrow X$ such that the restriction on $\{x,y,z\}$ is an isometric embedding concluding the proof. This implies that $\tilde{f}$ is also an isometric embedding. Observe that the existence of such a tripod can be thought of as a ``higher dimensional" analogue of the property of being a geodesic metric space. Namely, a metric space $X$ is \emph{geodesic} if for any pair of points $x,y,$ there is an isometrically embedded $I$ with end points $x,y$ in $X.$ On the other hand, a metric space is, let's say \emph{tripodal}, if for any three points $x,y,z$ there is an isometrically embedded tripod $T$ with end points $x,y,z$ in $X.$

Given three points $x,y,z$ in an injective metric space, we have shown that there is an isometrically embedded tripod $T$ with a median-like point $m$ at the center of such a tripod. It is worth noting that such a tripod (and its median point $m$) is not unique, even for the same $x,y,z.$ Namely, it's possible to find two isometrically embedded tripods $T_1,T_2$ with end points $\{x,y,z\}$ whose median points $m_1 \neq m_2$ are different. This is not surprising given the fact that injective metric spaces have plenty of space on the inside making their geometry very tolerant (maybe too tolerant here?).

That being said, as Lang points out in his article  \cite{LANG2013}, the existence of such an isometrically embedded tripod is usually a good first test to whether the metric space in question is injective.

\vspace{2mm}

\noindent 3) \textbf{Centers:} Given a finite collection of points  $\{x_1,\cdots x_n\}$ in a metric space $X$, it's often desirable to associate a ``center" or an ``average of mass" to these points. For instance, if $G$ acts geometrically on a metric space $X$ where centers exist, one can usually use such centers to prove that each finite subgroup $F$ has to fix a point (the center of an $F$-orbit) which in turns implies that there are finitely many conjugacy classes of finite subgroups in $G.$

Recall that for a given metric space $X,$ the space $\mathbb{R}^X$ is simply the vector space consisting of all maps $f:X \rightarrow \mathbb{R}.$ As Petyt \cite{petyt:onlarge} describes in his thesis (building on ideas from \cite{descombes:asymptotic} and \cite{navas:L1}), the vector space structure on $\mathbb{R}^X$ makes it easy to assign centers for a finite collection of points $\{f_1,\cdots f_n\} \subset \mathbb{R}^X$, simply, by adding up points and dividing the sum by their number. That is to say, the \emph{center} of $\{f_1,\cdots f_n\}$ is defined by $$c=c(f_1,\cdots f_n)= \frac{1}{n}(f_1+f_2 \cdots f_n).$$


    \begin{figure}[ht]
    \includegraphics[width=13cm, trim = 3cm 5cm 0cm 5.7cm]{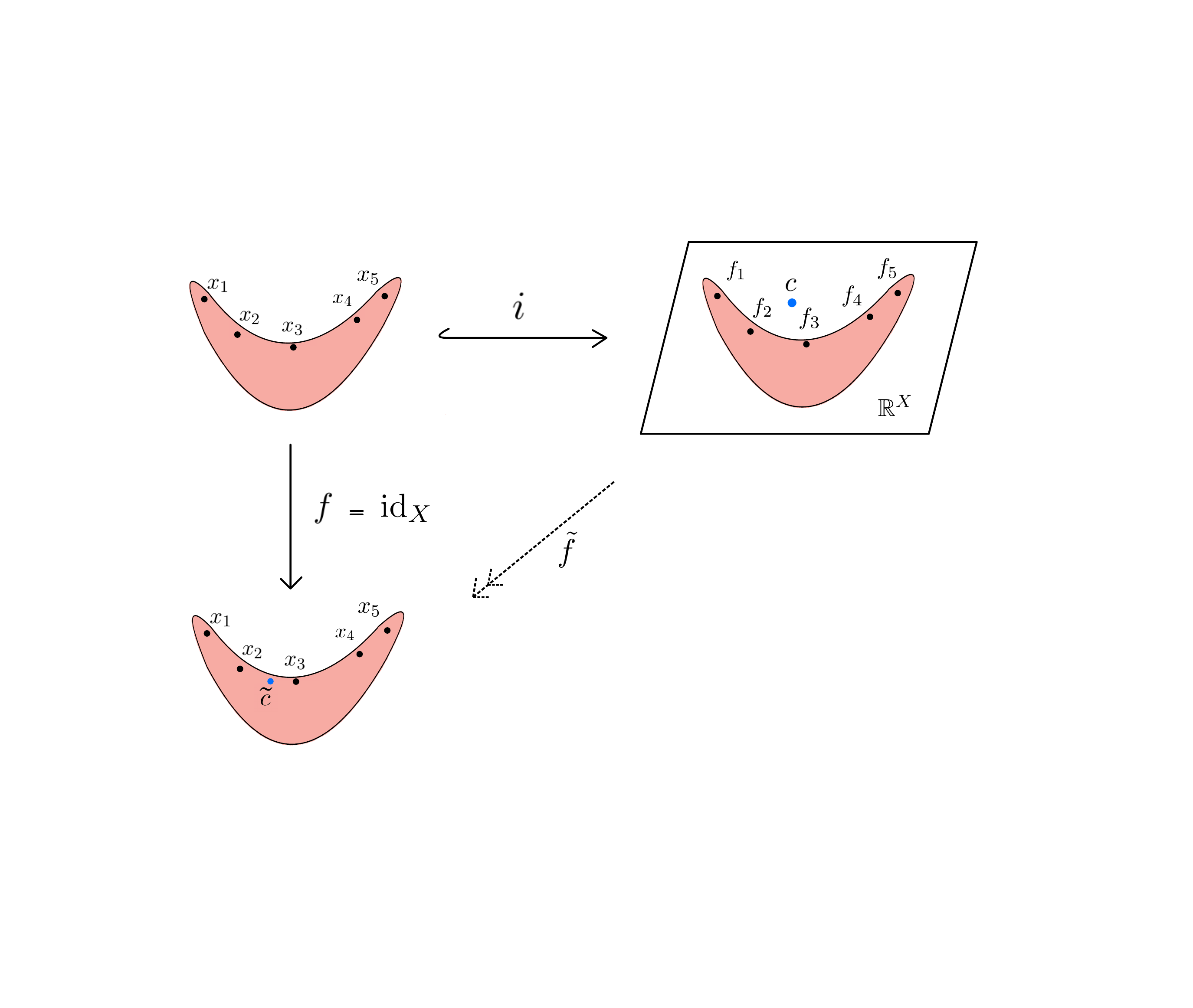}\centering
\caption{For a finite collection of points in $X$, we constructs the center of their canonical image $i(X)$ in $\mathbb{R}^X$ and the universal property lets us push it down to $X.$ } \label{fig:center}
\end{figure}

Given a finite collection of points $\{x_1,\cdots x_n\}$ in an injective metric space $X,$ we will use the fact that centers exist in $\mathbb{R}^X$ to associate a center to $\{x_1, \cdots x_n\},$ namely, by isometrically embedding $\{x_1,\cdots x_n\}$ in $\mathbb{R}^X$, finding a their center in $\mathbb{R}^X$ and pushing it down to $X$ via the universal property.

More precisely, consider the isometric embedding injection $i:X \rightarrow \mathbb{R}^X$ given by $x \mapsto \dist(x,-)$ where $\mathbb{R}^X$ is given with respect to the distance $\dist_\infty(f,g)=\text{sup}\{|f(x)-g(x)| \text{ where } x \in X\}.$ The images of the points $\{x_1,\cdots x_n\}$ to which we would like to assign a center are given by $\{f_1,\cdots f_n\}$ where $f_i:X \rightarrow \mathbb{R}$ is given by $f(z)=\dist(x_i,z)$. As we just discussed, the points $\{f_1,\cdots f_n\}$ admit a center $c \in \mathbb{R}^X.$ It is worth noting that this center $c$ doesn't generally live in $i(X) \subset \mathbb{R}^X,$ see Figure \ref{fig:center}, however, injectivity of $X$ assures that the identity map $f=\id_X:X \rightarrow X$ extends to a map $\tilde{f}:\mathbb{R}^X \rightarrow X,$ in particular, the point $c \in \mathbb{R}^X$ has a natural image in $X.$ Further, the fact that the map $\tilde{f}$ is 1-Lipshitz assures that $\tilde{f}(c)$ indeed behaves like a center for $\{x_1,\cdots x_n\},$ that is, the point $\tilde{f}(c)$ is at least as close to each $x_i$ as $c$ is to $f_i.$

\begin{remark}
    There is an important distinction to be made between the examples given in items 1 and 2 above versus the one given in item 3. In examples 1 and 2, we started with a finite collection of points $Y$ which we wanted to satisfy a certain property, and in order to establish such a property we isometrically embedded $Y$ into into a metric space $Z$ where the property holds. Injectivity of $X$ then kicked in allowing us to push such a property down to $X$ producing the desired statement. Importantly, the existence of such a property for $Z$ relied merely on the metric structure for $Z$ and nothing else.

    In example 3, we have also started with a finite collection of points $Y \subset X$ which we wanted to satisfy a certain property; the existence of a center, and in order to do so, we isometrically embedded $X$ into a metric space where centers exist, namely, the \emph{vector space} $\mathbb{R}^X$. Injectivity of $X$ then allowed us to push this center down to $X.$ Unlike items 1 and 2, the fact that centers exist in $\mathbb{R}^X$ didn't only use the metric structure on $\mathbb{R}^X,$ but it also relied on $\mathbb{R}^X$ being a vector space (addition was used to establish centers in $\mathbb{R}^X$). This shows that constructions that utilize vector space structures can often be exported to injective metric spaces; despite the fact that the latter aren't necessarily vector spaces.

\end{remark}

\begin{figure}[ht]
  \includegraphics[width=13cm, trim = 3cm 5cm 1cm 5.7cm]{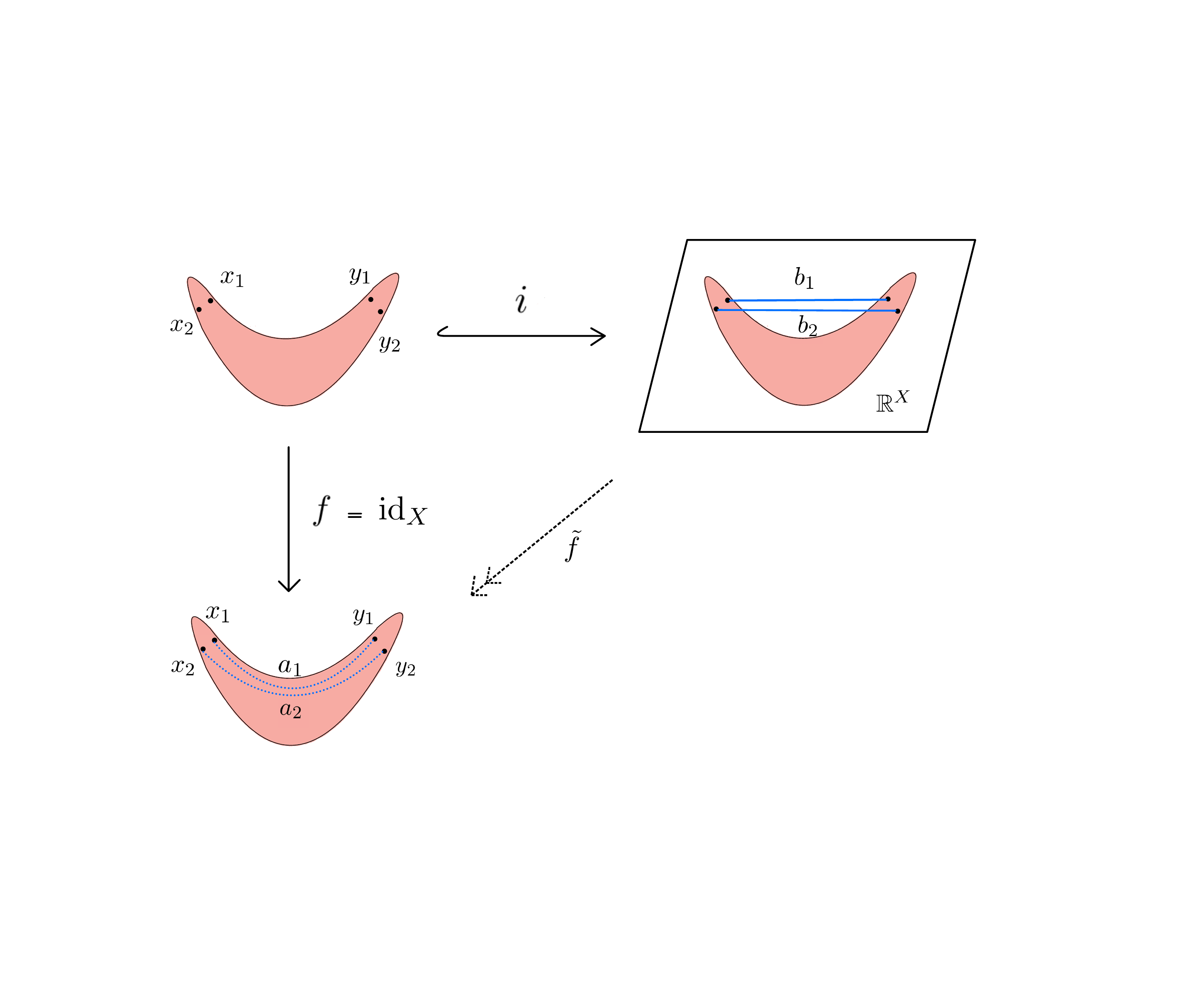}\centering
\caption{For two pairs of points $x_1,x_2,y_1,y_2 \in X$, we connect their canonical image in $i(X)$ by the affine geodesics $b_1,b_2 \subset \mathbb{R}^X$ and map these geodesics down to $X$; relying on the universal property for injectivity.} \label{fig:Combing}
\end{figure}

\noindent 4) \textbf{Fellow-travelling geodesics}: We will show that each injective metric space $X$ admits a collection of geodesics $\calG$ connecting every pair of points which fellow travel each other. As has been the philosophy of our constructions so far, all we need to do is find a metric space $Z$ where our desired property is present, include $X$ (or a finite set of points in $X$) in $Z$, and use injectivity of $X$ to push such a property down to $X$.

We will do just that. Namely, given two pairs of points $\{x_1,y_1\}$, $\{x_2,y_2\}$ where $\dist(x_1,x_2)$ and $\dist (y_1,y_2)$ are small, say they are both equal to  1, we wish to build a pair of geodesics $\{a_j\}_{j=1}^2$ connecting $\{x_j,y_j\}$ which remain within 1 of each other. Again, we isometrically embed $X$ into $\mathbb{R}^X$ via $i:X \rightarrow \mathbb{R}^X$ and consider the affine geodesics connecting $i(x_j),i(y_j)$ given by $b_j=(1-t)x_j+ty_j.$ Since we are in $\mathbb{R}^X,$ these affine geodesics $b_1,b_2$ stay within 1 of each other as they begin and end at points $i(x_j), i(y_j)$ with $\dist(x_1,x_2)=\dist(i(x_1),i(x_2))=1,$ and $\dist(y_1,y_2)=\dist(i(y_1),i(y_2))=1.$  By definition of injectivity, the identity map $f:X \rightarrow X$ extends to $\tilde{f}: \mathbb{R}^X \rightarrow X$ and since $\tilde{f}$ is 1-Lipshitz, the paths $a_j=\tilde{f}(b_j)$ stay within 1 of each other. Finally, the triangle inequality assures that $\{a_j\}_{j=1}^2$ are also geodesics, see Figure \ref{fig:Combing}.

In the study of geometric group theory, one of the most desirable properties of a metric space $X$ is the presence of a \emph{geodesic bi-combing}: an $\isom(X)$-invariant collection of geodesics $\calG$ connecting every pair of points $x,y \in X$ satisfying some fellow-travelling condition similar to what we obtained above. While the previous description provides a collection of fellow-travelling geodesics connecting every pair of points $x,y \in X$, there is no guarantee that such a collection is $\isom(X)$-invariant (thanks to Thomas Haettel for pointing this out to me). That being said, the proof of the existence of an invariant geodesic bi-combing follows very similar lines, but the ``push-down" map must be constructed explicitly to assure $\isom(X)$-invariance, see \cite{LANG2013}. The same comment applies to the center construction in item 3 above, to assure $\isom(X)$-invariance of the center, one should use such a map.

\vspace{2mm}

\noindent 4) \textbf{Helly}: The existence of the tripod in item 3 above is a reflection of the more general \emph{Helly property} for balls which characterizes injective metric spaces. Namely, a metric space is injective if and only if it is geodesic and every collection of pair-wise intersecting closed balls $\{B(x_i,r_i)\}_{i \in I
}$ admit a total intersection. The fact that the Helly property holds for injective spaces rhymes well with the mantra that injective spaces have exceptionally accommodating geometries. Namely, to have a collection of balls pair-wise intersect can be thought of as the minimal pre-requisite for them to have a total intersection. So the Helly property here says, once a collection of metric balls meets the minimal pre-requisite of having a total intersection, the accommodating nature of injective metric spaces (formally seen by its universal property) assures that they do. The proof of the (forward direction of the) statement also follows the mantra of ``find a metric space with the desired property, and push it down to $X$", we will do just that, see Figure \ref{fig:center_and_combing}. Let $\{B(x_i,r_i)\}_{i \in I
}$ be a collection of pair-wise intersecting balls, that is, $\dist(x_i, x_j) \leq r_i+r_j$ for all $i,j \in I$ and let $Y=\{x_i\}_{i \in I}.$ Consider the (possibly infinite) weighted graph $Z$ with vertex set $\{x_i\}_{i \in I} \cup \{\star\}$, each two vertices $x_i,x_j \in \{x_i\}_{i \in I}$ are connected by an edge with weight $\dist(x_i,x_j)$, and $\star$ is connected to each $x_i$ by an edge weighted $r_i.$ It is immediate by the triangle inequality that we have an isometric embedding $i:Y \rightarrow Z$. Since $Y$ also isometrically embeds in $X$ via the inclusion map $f:Y \rightarrow X$,  the universal property of injectivity produces a map $\tilde{f}: Z \rightarrow X$. As $\Tilde{f}$ is 1-Lipshitz, the point $\tilde{f}(\star)$ satisfies $\dist(\tilde{f}(\star), \tilde{f}(x_i)) \leq \dist(\star, x_i)=r_i$ for each $i.$ This concludes the proof.

\begin{figure}[ht]
    \centering
    \begin{minipage}{0.45\textwidth}
        \centering
        \includegraphics[width=7cm, trim = .01cm 5cm 5cm 5.7cm]{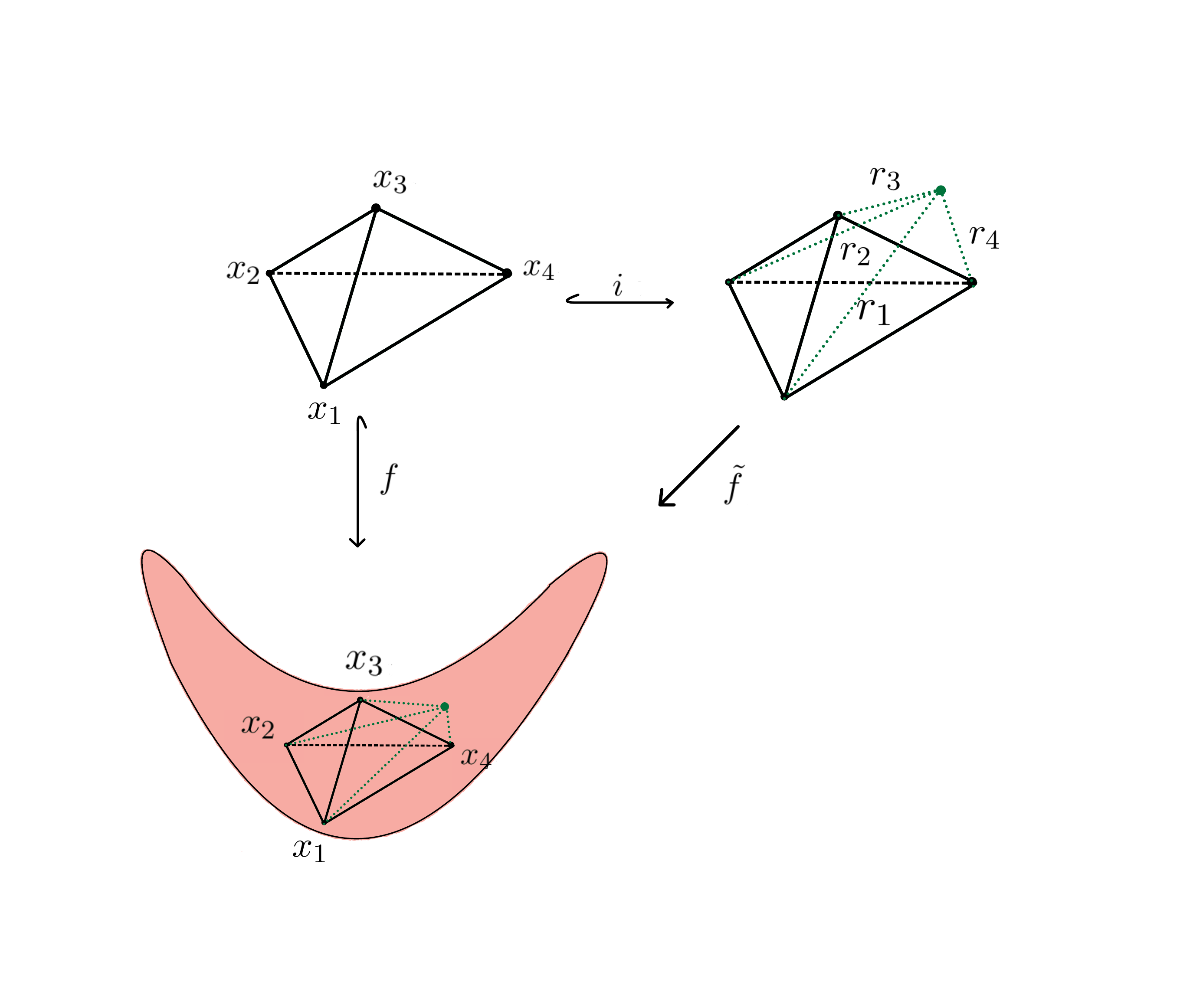} 
        \caption{The picture on the right shows the failure of the Helly property in $\mathbb{R}^2$ with respect to the usual Euclidean metric. The picture on the left shows how injectivity implies the Helly property.}\label{fig:center_and_combing}
    \end{minipage}\hfill
    \begin{minipage}{0.45\textwidth}
        \centering

        \includegraphics[width=8.5cm, trim = .01cm 5cm 5cm 5.7cm]{Not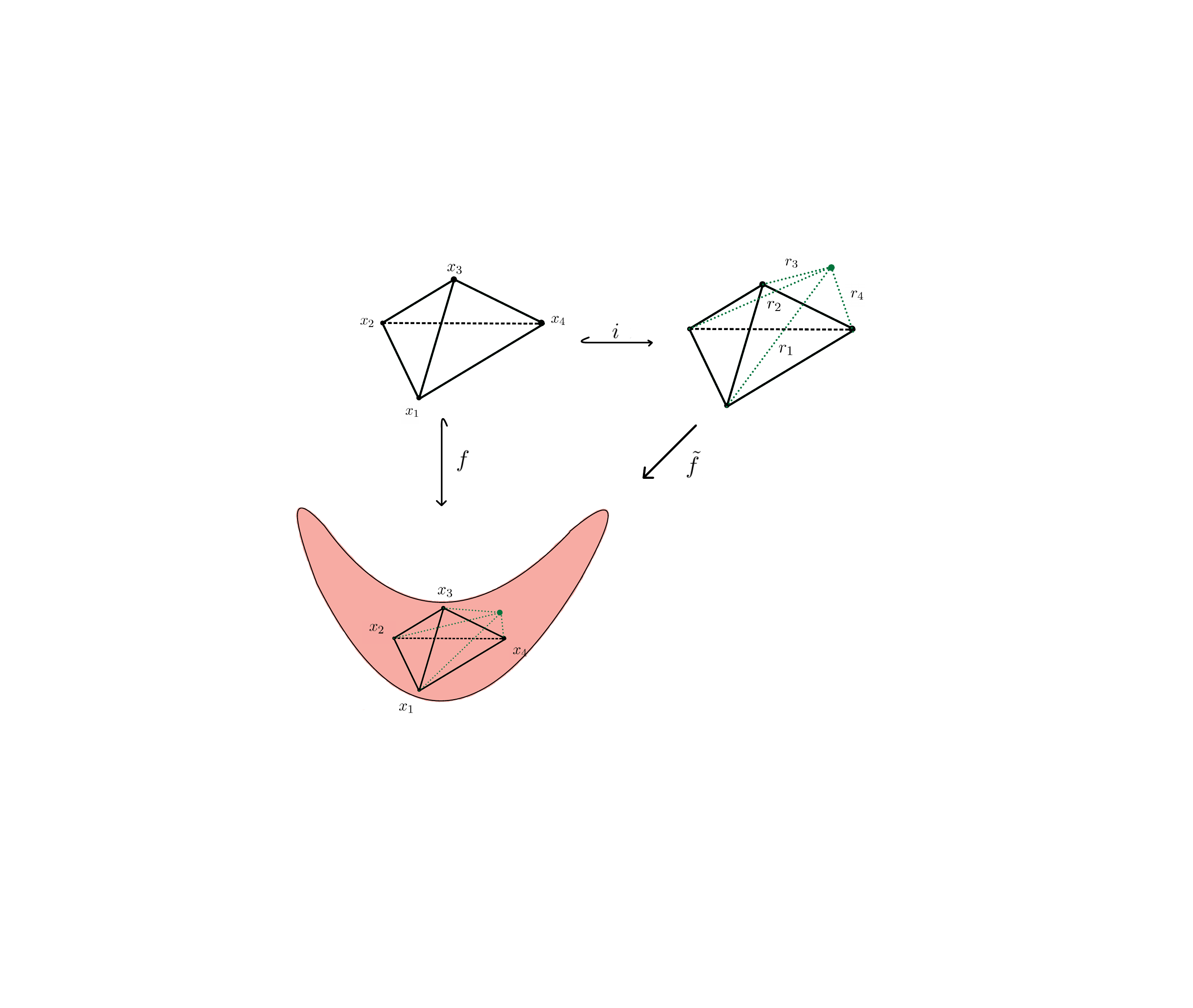} 
    \end{minipage}
\end{figure}

\noindent 5) \textbf{Injective spaces are complete}: This is perhaps a good exercise at this point as its proof is very similar to the ones given in items 1-4 above.




\subsection{The canonical nature of injective spaces:}\label{subsec:the_cannonical_nature} In the introduction, we have discussed examples of injective metric spaces as well as the recent impact of injective metric spaces on the field of geometry group theory. In this section, we mention a few reasons that make injective spaces very canonical objects, both in general metric geometry and particularly in geometric group theory. Their canonical nature should not be surprising, however, given the fact that they are defined via a universal property. We start by noting that every metric space admits an isometric embedding in a ``smallest" injective metric space.

\subsubsection{Injective envelopes}

Suppose that $X$ is a metric space which is not injective, this means that $X$ doesn't have enough points to make its geometry accommodating. A natural thing to try and do is add more points to $X$, as many points as needed to make $Y=X \cup \{\text{extra points}\}$ accommodating enough, i.e., to make it injective. On the other hand, if we desire the resulting bigger space $Y=X \cup \{\text{extra points}\}$ to be reflective of the original geometry of $X$, it's important that we don't add more points than necessary, i.e., we want $Y=X \cup \{\text{extra points}\}$ to be injective $Y-X$ but also as small as possible. The tension between these is what defines the \emph{injective hull} of a metric space $X$.

Given a metric space $X$, consider the set $\Delta:=\{f \in \mathbb{R}^X | f(x)+f(y) \geq \dist(x,y) \,\, \forall x,y \in X\}.$ An element $f \in \Delta$ is said to be of a \emph{metric form}. Define the set $\Delta^1:=\Delta \cap \text{Lip}^{1}(X, \mathbb{R})$ where $\text{Lip}^{1}(X, \mathbb{R})$ denotes the space of all $1$-Lipshitz maps $f:X \rightarrow \mathbb{R}$.  We equip the set $\Delta^1$ with the distance $\dist_\infty(f,g):=\text{sup}\{|f(x)-g(x)| \text{ where } \,\, x \in X\}$.

The space $\Delta$ has a natural poset structure where $f \leq g$ if and only if $f(x) \leq g(x)$ for all $x \in X.$ The \emph{injective hull} (sometimes also called the \emph{injective envelope} or the \emph{tight span}) of a metric space $X$, denoted $E(X)$, is defined to be the collection of all minimal elements in the aforementioned poset on $\Delta$:
$$E(X):=\{f \in \Delta| \,\,\text{if}\,\, g \in \Delta, \text{we have}\,\, g \leq f \implies g=f\}.$$
 
As a metric space, the injective hull is the set $E(X)$ equipped with the $\dist_\infty$-distance. The injective hull was originally introduced by Isbell \cite{Isbell1964SixTA} and was independently rediscovered twice since, first by Dress in \cite{Dress1984TreesTE} and second by Chrobak and Larmore in \cite{Chrobak1994}. We now summarize some properties of $E(X),$ for more details, see Section 3 of \cite{LANG2013}.

\begin{lemma}[{\cite[Section 3]{LANG2013}}]\label{lem:summary} For any metric space $X$, the injective hull $E(X)$ is a geodesic metric space with respect to $\dist_\infty$-distance. Furthermore, we have the following.

\begin{enumerate}
    \item There is an isometric embedding $e:(X,\dist) \hookrightarrow (E(X),\dist_\infty)$ given by $x \mapsto \dist(x,-).$
    \item For any $f \in \Delta^1,$ we have $f(x)=\dist_\infty(e(x),f)$ for all $x \in X.$
    
    \item  For $f \in \mathbb{R}^X$, we have $f \in E(X)$ if and only if  $f \in \Delta$ and for any $\epsilon >0$ and any $x \in X,$ there is some $y \in X$ with $f(x)+f(y) \leq \dist(x,y)+\epsilon.$ 
    
    \item $E(X) \subseteq \Delta^1.$
In fact, the embedding $e:X \hookrightarrow E(X) $ is $\isom(X)$-equivariant.
    \end{enumerate}

\end{lemma}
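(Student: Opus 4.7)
The plan is to establish items (1)--(4) in a logical order that differs from the numbering: first (1), then the characterization (3), which is the technical heart, then (4) together with the $\isom(X)$-equivariance, then (2), and finally the geodesicity of $E(X)$, which is the most substantive piece.

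First I would verify (1). Each function $e(x) = \dist(x,-)$ lies in $\Delta$ by the triangle inequality. To see $e(x)$ is already minimal, suppose $g \in \Delta$ and $g \leq e(x)$. Setting $z = w = x$ in the $\Delta$-condition forces $g(x) \geq 0$, while $g \leq e(x)$ forces $g(x) \leq \dist(x,x) = 0$; hence $g(x) = 0$, and then $g(z) \geq \dist(z,x) - g(x) = \dist(z,x) = e(x)(z)$, so $g = e(x)$. Thus $e(x) \in E(X)$, and $e$ is isometric because $|\dist(x,z) - \dist(y,z)| \leq \dist(x,y)$ with equality at $z = x$.

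Next I would prove (3) by the contrapositive of each direction. For the forward direction, assume $f \in E(X)$ but $\delta := \inf_y(f(x_0) + f(y) - \dist(x_0, y)) > 0$ for some $x_0$; taking $y = x_0$ in the infimum gives $2f(x_0) \geq \delta$, so in particular $f(x_0) > 0$. Set $c := \min(\delta, f(x_0)) > 0$, define $f'(x_0) := f(x_0) - c \geq 0$, and $f'(z) := f(z)$ otherwise. Checking the three cases $(x_0, x_0)$, $(x_0, y)$ with $y \neq x_0$, and $(y, w)$ both $\neq x_0$ of the $\Delta$-condition yields $f' \in \Delta$, while $c > 0$ gives $f' \leq f$ with $f' \neq f$, contradicting minimality of $f$. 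For the reverse direction, suppose $g \in \Delta$ with $g \leq f$ and $g(x_0) < f(x_0)$; then for every $y$, $f(x_0) + f(y) - \dist(x_0, y) \geq (f(x_0) - g(x_0)) + (g(x_0) + g(y) - \dist(x_0, y)) \geq f(x_0) - g(x_0) > 0$, so the $\epsilon$-criterion fails at $x_0$.

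With (3) in hand, (4) follows quickly. For $f \in E(X)$ and $x_1, x_2 \in X$, pick $y$ with $f(x_1) + f(y) \leq \dist(x_1, y) + \epsilon$ and subtract $f(x_2) + f(y) \geq \dist(x_2, y)$ to obtain $f(x_1) - f(x_2) \leq \dist(x_1, y) - \dist(x_2, y) + \epsilon \leq \dist(x_1, x_2) + \epsilon$; letting $\epsilon \to 0$ and swapping the roles of $x_1, x_2$ yields $|f(x_1) - f(x_2)| \leq \dist(x_1, x_2)$. For $\isom(X)$-equivariance, the action $(\phi \cdot f)(z) := f(\phi^{-1}(z))$ preserves both $\Delta$ and the criterion of (3) (since they involve only the $\dist$-distance on $X$), and one computes $(\phi \cdot e(x))(z) = \dist(x, \phi^{-1}(z)) = \dist(\phi(x), z) = e(\phi(x))(z)$. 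Item (2) now drops out: any $f \in \Delta^1$ satisfies both $f(z) - \dist(x,z) \leq f(x)$ (by 1-Lipschitzness) and $\dist(x,z) - f(z) \leq f(x)$ (by $f \in \Delta$), so $|f(z) - \dist(x,z)| \leq f(x)$, with equality at $z = x$; hence $\dist_\infty(e(x), f) = f(x)$.

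The main obstacle is proving that $(E(X), \dist_\infty)$ is geodesic. My preferred route is to first show that $E(X)$ is itself an injective metric space, whereupon the interval-embedding argument from Subsection~\ref{subsec:their_accomodating_geometry} immediately produces geodesics. Verifying injectivity reduces to the Helly property for $\dist_\infty$-balls: given a pairwise-intersecting family $\{B_{\dist_\infty}(f_i, r_i)\}_{i \in I}$ in $E(X)$, I would try to produce a common element by forming an envelope such as $h(x) := \sup_i(f_i(x) - r_i)$, using (2) together with the pairwise estimates $\dist_\infty(f_i, f_j) \leq r_i + r_j$ to show $h \in \Delta$, then invoking a Zorn-style argument to extract a minimal element $\tilde h \leq h$ in $\Delta$, and finally verifying via (3) and (2) that $\tilde h$ lies in every ball. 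The delicate steps here are the Zorn descent, which requires chain bounds under pointwise infima, and checking that the extracted minimal $\tilde h$ actually meets every prescribed ball, both sensitive to the precise choice of envelope.
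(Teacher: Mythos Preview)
The paper does not prove Lemma~\ref{lem:summary}; it is stated as a summary of facts from \cite[Section~3]{LANG2013}, so there is no in-paper argument to compare against. Your arguments for items (1)--(4) are correct and essentially coincide with the standard ones in Lang's paper.

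For geodesicity, your overall strategy---prove that $E(X)$ is itself injective and then invoke the interval-embedding trick---is the right one and is how Lang proceeds. However, the specific envelope you propose, $h(x)=\sup_i(f_i(x)-r_i)$, does not lie in $\Delta$ in general. Already for $X=\{p,q\}$ with $\dist(p,q)=2$ and the single ball centred at $f_1=(1,1)$ of radius $r_1=1$, one gets $h\equiv 0$, which violates $h(p)+h(q)\geq 2$. The envelope that works is the \emph{upper} one,
\[
b(x):=\inf_i\bigl(f_i(x)+r_i\bigr).
\]
The pairwise condition $|f_i(z)-f_j(z)|\leq r_i+r_j$ together with $f_i\in\Delta$ gives $(f_i(x)+r_i)+(f_j(y)+r_j)\geq f_i(x)+f_i(y)\geq\dist(x,y)$, so $b\in\Delta$. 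A Zorn descent (pointwise infima of chains stay in $\Delta$) then produces a minimal $\tilde b\leq b$ in $E(X)$. Finally, since $\tilde b\in\Delta$ and $\tilde b\leq b\leq f_i+r_i$, for every $y$ one has $\tilde b(x)\geq\dist(x,y)-f_i(y)-r_i$; applying the characterisation (3) to $f_i\in E(X)$ supplies a $y$ with $\dist(x,y)-f_i(y)\geq f_i(x)-\epsilon$, whence $\tilde b(x)\geq f_i(x)-r_i$. Thus $\dist_\infty(\tilde b,f_i)\leq r_i$ for all $i$. So your outline is salvageable with this single correction, and you were right to flag the envelope choice as the sensitive point.
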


One perspective on the injective hull of a metric space is that it's a convexifying process in the sense that we are adding points to $X$ that had a ``canonical reason" of being in $X.$ Namely, all of the newly added points $f:X\rightarrow \mathbb{R}$ were of a metric form (i.e., they satisfy $f(x)+f(y) \geq \dist(x,y))$ which essentially makes them indistinguishable from the points $\dist(x,-) \in e(X) \cong X$ as both satisfy the same triangle-like inequality.



\subsubsection{Natural metric on products} Given two objects $X_1,X_2$ in a category $\calC$, the \emph{product} of $X_1$ and $X_2$ is an object $X$, usually denoted as $X_1 \times X_2$, equipped with a pair of morphisms $\{\pi_i: X \rightarrow X_i\}_{i=1}^2$ such that for any object $Y$ and morphisms $\{f_i: Y \rightarrow X_i\}_{i=1}^2$, there is a unique morphism $f:Y \rightarrow X_1 \times X_2$ so that the diagram in Figure \ref{fig:products} commute. For instance, in the category of sets, topological spaces and groups, the universal notion of a product above gives the usual product structures on such objects, in particular, for topological spaces it gives the usual product topology on $X_1 \times X_2$ and for groups, the product is exactly the direct product of the two groups. Interestingly, if we work with the category of metric spaces and 1-Lipshitz maps, then the product of two metric spaces $X_1 \times X_2$ resulted by the above universal property is the metric space $(X_1 \times X_2, \dist_\infty)$, that is, $\dist((x_1,x_2), (x_1',x_2'))= \text{max}\{|x_1-x_1'|, |x_2-x_2'|\}$. In particular, the natural metric to consider on the Euclidean space $\mathbb{R}^n$ is the $\dist_\infty$-distance and not the Euclidean or the combinatorial (taxi-cap) one. Said differently, the ``correct" metric structure on $\mathbb{R}^n$ (when morphisms are chosen to be 1-Lipshitz maps) is precisely the one making it an injective metric space.

\begin{figure}[ht]
    \caption{Products in any Category.}
    \label{fig:products}

\begin{tikzpicture}[commutative diagrams/every diagram]
\matrix[matrix of math nodes, name=m, commutative diagrams/every cell] {
 & Y &  \\
X_1 & X_1 \times X_2 & X_2 \\};
\path[commutative diagrams/.cd, every arrow, every label]
(m-1-2) edge [commutative diagrams/dashed] node {$f$} (m-2-2)
(m-1-2) edge node[swap] {$f_1$} (m-2-1)
(m-2-2) edge node {$\pi_1$} (m-2-1)
(m-2-2) edge node[swap] {$\pi_2$} (m-2-3)

(m-1-2) edge node {$f_2$} (m-2-3);

\end{tikzpicture}
\end{figure}

So far, we have seen two reasons why injective metric spaces are very canonical objects. The first of which is the fact that every metric space naturally comes with its unique injective envelope, or injective hull. Secondly, the canonical metric to impose on a product of two metric spaces is the $\dist_\infty$-metric which is intimately connected with injectivity. Now, we shall discuss reasons that make injective metric spaces appealing from a geometric group theory point of view.

\subsubsection{Coarsely injective metric spaces} A metric space $X$ is said to be \emph{coarsely injective} if there exists a constant $C$ such that every point of $E(X)$ is within $C$ of a point from $e(X),$ where $e$ is the map $e:X \rightarrow E(X)$. Equivalently, a geodesic metric space is coarsely injective if it satisfies the \emph{coarse Helly property} for its balls: there is a constant $C$ such that for every collection of pair-wise intersecting balls $\{B(x_i,r_i)\}_{i \in I}$, there is a point $x \in \cap_{i \in I} B(x_i, r_i+C)$; for equivalent definitions of coarsely injective metric spaces, see Proposition 5.2 in \cite{Urs2022}. The primary example to be kept in mind is that of a Gromov hyperbolic space \cite{LANG2013}. Further, in \cite{HHP}, the authors show that every HHS $(X, \dist)$ admits an injective distance $\rho_\infty$ that is equivariently quasi-isometric to the original $\dist$ on $X.$ Coarsely injective metric spaces can be thought of as spaces that are already very close to being injective so that injectifying them via the map $e:X \rightarrow E(X)$ doesn't amount to adding too many points: every newly added point is within a uniform constant of a point from $X.$ Finally, thinking of the injective hull of $X$ as a convexifying process as we described above, it is not much of a surprise that hyperbolic spaces are coarsely dense in their injective hull, given the well-known strong convexity properties they enjoy.

\subsubsection{Strongly-contracting geodesic} There are various competing way of quantifying what it means for a space $X$ to contain a ``hyperbolic-like" geodesic. These are usually geodesics that enjoy certain convexity properties similar to geodesics in hyperbolic spaces. The strongest of these ``hyperbolic-like" notions is that of \emph{$D$-strongly-contracting} geodesic. A geodesic $\gamma$ is said to be $D$-strongly-contracting if every ball $B$ with $B \cap \gamma=\emptyset$ satisfies that 
$\diam(\pi_\gamma(B)) \leq D,$ where $\pi_\gamma:X \rightarrow \gamma$ is the nearest point projection.

Strongly-contracting geodesics are characterized by having a strong ``bottleneck-like" behaviour. Namely, it is not hard to check that a geodesic $\gamma$ is $D$-strongly-contracting if and only if for any two points $a,b \in X$ that project far along $\gamma$ (more than $6D$), each geodesic connecting $a,b$ must travel $4D$-close to every point $p$ lying between $\pi_\gamma(a), \pi_\gamma(b)$ along $\gamma.$

A geodesic metric space is $\delta$-hyperbolic if and only if there exists a constant $D \geq 0$ such that every geodesic is $D$-strongly contracting; in this sense, these geodesics can be thought of as ``hyperbolic-like"-geodesics. The presence of (infinite) strongly contracting geodesics in a group $G$ or a space $X$ on which $G$ acts has numerous desirable consequences regarding its growth, genericity of certain elements, acylindrical hyperbolicity and many more.

On the other hand, the weakest of these ``hyperbolic-like" notions is that of an \emph{$M$-Morse geodesic}. For a function $M: \mathbb{R}^+ \times \mathbb{R}^+ \rightarrow \mathbb{R}^+ $, a geodesic $\gamma$ is said to be $M$-Morse if every $(q,Q)$-quasi-geodesic $\beta$ with end points on $\gamma$ remains the $M(q,Q)$-neighborhood of $\gamma.$ Such $M$-Morse geodesics also characterize hyperbolicity in the sense that a geodesic metric space is hyperbolic if and only if there is a map $M: \mathbb{R}^+ \times \mathbb{R}^+ \rightarrow \mathbb{R}^+ $ such that every geodesic is $M$-Morse.

For any geodesic metric space, strongly contracting geodesics are Morse, but the converse is far from true (see \cite{Fink}).

In this sense, the presence of Morse geodesics in a space $X$ can be thought of as a very minimal pre-requisite to the existence of strongly contracting geodesics. The exceptionally tolerating nature of injective spaces assures that such a minimal pre-requisite suffices:

\begin{theorem}[{\cite[Theorem A]{Sisto-Zalloum-22}}] A geodesic in an injective metric space is $M$-Morse if and only if it is $D$-strongly contracting where $M$ and $D$ determine each other.
    
\end{theorem}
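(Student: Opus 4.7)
The plan is to treat the two implications separately; only the converse requires injectivity.

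\textbf{Forward direction} ($D$-strongly contracting $\Rightarrow$ $M$-Morse). This direction holds in any geodesic metric space and does not use injectivity. If $\beta$ is a $(q,Q)$-quasi-geodesic with endpoints on $\gamma$ and some $w\in\beta$ has $\dist(w,\gamma)=N$, then the ball $B(w,N/2)$ is disjoint from $\gamma$, so $D$-strong contraction bounds the diameter of $\pi_\gamma(\beta\cap B(w,N/2))$ by $D$. Since $\beta$ must enter and exit this ball en route between endpoints on $\gamma$, a subsegment of length at least $N$ drifts at most $D$ along $\gamma$; pairing this with the $(q,Q)$-QG condition yields a uniform bound $N\leq M(D,q,Q)$.

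\textbf{Reverse direction} ($M$-Morse $\Rightarrow$ $D$-strongly contracting). Here I would use the Isbell conical bicombing $\sigma$ afforded by injectivity (every injective metric space admits such a bicombing, and along combed geodesics the distance function between pairs is convex). The key elementary fact I would lean on is this: for any $x$ with $p=\pi_\gamma(x)$, the distance to $\gamma$ along $\sigma_{p,x}$ grows linearly, i.e.\ $\dist(\sigma_{p,x}(t),\gamma)=t\cdot\dist(p,x)$. The upper bound is immediate from $p\in\gamma$; the lower bound uses $\dist(x,\gamma)=\dist(x,p)$ together with the triangle inequality applied at $\sigma_{p,x}(t)$.

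Now let $B(c,r)$ be disjoint from $\gamma$, take $x,y\in B(c,r)$ with $p=\pi_\gamma(x)$, $q=\pi_\gamma(y)$, and form $\beta=\sigma_{p,x}\cup\sigma_{x,y}\cup\sigma_{y,q}^{-1}$ from $p$ to $q$. The goal is to use convexity of the bicombing to certify that $\beta$ is a $(K,K)$-quasi-geodesic with $K=K(M)$; if so, $M$-Morseness gives $\beta\subset N_{M_0}(\gamma)$, and the linear-growth fact forces $\dist(x,\gamma),\dist(y,\gamma)\leq M_0$, whence $\dist(p,q)\leq 2M_0+\dist(x,y)$. A parallel argument applied to the ``through-centre'' concatenation $[p,c]\cup[c,q]$ forces $\dist(c,\gamma)\leq M_0$, and combined with the hypothesis $\dist(c,\gamma)\geq r$ this bounds $r$ as well; together one extracts $\dist(p,q)\leq D(M)$ for some $D$ depending only on $M$, establishing $D$-strong contraction.

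\textbf{Main obstacle.} The technical crux is verifying that the paths $\beta$ and $[p,c]\cup[c,q]$ are quasi-geodesics with constants depending only on $M$, and not on the radius $r$ or on the heights $\dist(x,\gamma),\dist(y,\gamma)$. Convexity of the Isbell bicombing does most of the work, giving bounded pair-wise deviation of combed geodesics sharing an endpoint, but delicate edge cases --- most notably when $x,y$ are close to $\gamma$ while $c$ is very far from it --- seem to require an additional appeal to the coarse Helly property to localise the offending configuration into a bounded region on which $M$-Morseness applies directly.
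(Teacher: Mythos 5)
Your forward direction is fine: strongly contracting $\Rightarrow$ Morse is the easy implication, holds in every geodesic space, and your sketch is the standard argument (compressed, but repairable). The survey itself gives no proof of this theorem -- it only cites \cite{Sisto-Zalloum-22} -- so the comparison is with that proof, whose entire content is the reverse implication, and there your proposal has a genuine gap at exactly the step you flag as the ``crux''. The claim that $\beta=\sigma_{p,x}\cup\sigma_{x,y}\cup\sigma_{y,q}$ can be certified as a $(K(M),K(M))$-quasi-geodesic by convexity of the bicombing is not just unproved, it is false as stated: the pair of points $p,q$ already violates the quasi-geodesic inequality whenever $\dist(p,q)$ is small compared with $\dist(x,\gamma)$, since the length of $\beta$ is about $\dist(x,\gamma)+\dist(x,y)+\dist(y,\gamma)$, and no conicality/convexity estimate can change a length-versus-endpoint-distance failure. (Your linear-growth fact for $\sigma_{p,x}$ is correct, but it is a general fact about geodesics issuing from a nearest point and does not help here.) The honest quantitative content of your argument is this: the detour $\beta$ is a quasi-geodesic with constants of order $\bigl(\dist(x,\gamma)\bigr)/\dist(p,q)$, so applying $M$-Morseness only rules out projections whose diameter is comparable to the radius $r$ of the ball. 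That yields the statement that Morse geodesics are \emph{sublinearly} contracting -- which is true in every geodesic metric space and uses no injectivity at all -- and not the uniform bound $D=D(M)$, which is the whole theorem. The same objection kills the ``through-centre'' step: $[p,c]\cup[c,q]$ is not a uniform quasi-geodesic when $\dist(p,q)\ll\dist(c,\gamma)$, so nothing forces $\dist(c,\gamma)\le M_0$.

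Relatedly, the problematic regime is not the ``edge case'' you name ($x,y$ near $\gamma$ with $c$ far away); it is the generic regime where the heights $\dist(x,\gamma),\dist(y,\gamma)$ are large compared with $\dist(p,q)$, i.e.\ precisely the regime in which Morse fails to imply strongly contracting in general spaces (Rafi--Verberne-type examples). Any correct proof must use injectivity quantitatively in that regime, not as a patch for special configurations: in \cite{Sisto-Zalloum-22} this is where the Helly property/hyperconvexity is used (one can realize prescribed distances to finitely many points, in the spirit of the tripod construction of Section \ref{sec:Injective}) to manufacture quasi-geodesics with constants independent of $r$ that are forced to travel far from $\gamma$ whenever some ball disjoint from $\gamma$ has projection of diameter larger than the sought-for $D$; Morseness then gives the contradiction. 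A conical bicombing alone cannot be the engine, since CAT(0) spaces and Banach spaces also carry such bicombings and your argument never uses anything more, whereas the theorem's difficulty is exactly to get a bound independent of the scale. So the skeleton (project, detour, apply Morse) is reasonable, but the step that makes the theorem true is missing.
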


If $\alpha$ is a strongly contracting geodesic in $X$, there is no reason to expect that it remains strongly contracting in a space $Y$ where $X$ isometrically embeds, even if $Y$ is injective. For instance, the line $X=\mathbb{R}$ is strongly contracting in $X$, however, it admits an isometric embedding into the injective metric space $(\mathbb{R}^2, \dist_\infty)$ where no geodesic line is strongly contracting. On the other hand, the injective hull $E(X)$ of a metric space $X$ was defined to be the \emph{minimal} injective metric space containing $X$, hence, one would hope that such a minimality assures that the geometries of $X$ and $E(X)$ are similar with respect to strongly contracting geodesics, and indeed, they are.

\begin{theorem}[{\cite[Theorem B]{Sisto-Zalloum-22}}] \label{thm:hull_iff}
A geodesic $\gamma$ in a metric space $X$ is $D$-strongly contracting if and only if its image in $E(X)$ is $D'$-strongly contracting, where $D,D'$ determine each other.
\end{theorem}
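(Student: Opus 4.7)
Let $e \colon X \hookrightarrow E(X)$ be the canonical isometric embedding from Lemma~\ref{lem:summary}(1). Since $e$ is isometric, $e(\gamma)$ is a geodesic in $E(X)$, and by Lemma~\ref{lem:summary}(2) the function $f \in E(X)$ satisfies $f(p) = \dist_\infty(e(p),f)$ for every $p \in X$; in particular the nearest points on $e(\gamma)$ to a given $f \in E(X)$ are the images under $e$ of the minimizers of $f|_\gamma$. The backward direction is routine: if $e(\gamma)$ is $D'$-strongly contracting in $E(X)$ and $B = B_X(z,r)$ is disjoint from $\gamma$, then $e(B) \subset B_{E(X)}(e(z),r)$ is disjoint from $e(\gamma)$, and since $e$ is an isometric embedding, nearest point projections from $X$ to $\gamma$ correspond under $e$ to nearest point projections from $e(X)$ to $e(\gamma)$. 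Hence $\diam_X(\pi_\gamma(B)) = \diam_{E(X)}(\pi_{e(\gamma)}(e(B))) \leq D'$, and $\gamma$ is $D'$-strongly contracting in $X$.

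For the forward direction, assume $\gamma \subset X$ is $D$-strongly contracting. The main tool will be Lemma~\ref{lem:summary}(3), which for each $f \in E(X)$ and each $\epsilon > 0$ produces points $x_f, y_f \in X$ with $f(x_f) + f(y_f) \leq \dist(x_f,y_f) + \epsilon$; by Lemma~\ref{lem:summary}(2) this exhibits $f$ as lying $\epsilon$-close to a geodesic between $e(x_f)$ and $e(y_f)$ in $E(X)$. The plan is as follows: given a ball $B_{E(X)}(f_0, r)$ disjoint from $e(\gamma)$ and points $f_1, f_2 \in B_{E(X)}(f_0, r)$ with near-projections $e(p_1), e(p_2) \in e(\gamma)$, first apply the midpoint lemma to each $f_i$ to obtain $x_i, y_i \in X$. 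Next use the metric-form inequality $f_i(q) + f_i(p) \geq \dist(q,p)$ together with the fact that $p_i$ minimizes $f_i|_\gamma$ to show that $\pi_\gamma(x_i)$ and $\pi_\gamma(y_i)$ are both coarsely contained in a neighborhood of $p_i$ whose radius is controlled by $f_i(p_i), f_i(x_i), f_i(y_i)$. Finally, invoke the $D$-strongly contracting property in $X$ on a suitable ball containing (a geodesic between) $x_1, y_1, x_2, y_2$, whose size is governed by $\dist_\infty(f_1,f_2) \leq 2r$, to bound $\dist(\pi_\gamma(x_1), \pi_\gamma(x_2))$, and thus $\dist(p_1, p_2)$, uniformly in terms of $D$ alone.

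The main obstacle is the third step, namely controlling the nearest point projection of an interior point $f_i \in E(X) \setminus e(X)$ by the nearest point projections of its midpoint representatives $x_i, y_i \in X$. A priori, the disjointness of $B_{E(X)}(f_0,r)$ from $e(\gamma)$ does not force the points $x_i, y_i$ to be far from $\gamma$ in $X$; the ``height'' of $f_i$ above $e(\gamma)$ may be distributed between the vertical distance $f_i(p_i)$ and the horizontal distances $\dist(x_i,\pi_\gamma(x_i))$ and $\dist(y_i,\pi_\gamma(y_i))$ in many different ways. Resolving this requires extracting a uniform trade-off between these quantities from the metric-form inequalities and the 1-Lipschitz property of $f_i$, so that in all regimes one is able to construct an \emph{honest} obstructing ball in $X$ to which the $D$-strongly contracting hypothesis can be applied. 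Once this trade-off is set up cleanly, the constants $D$ and $D'$ can be made to determine one another with a linear dependence.
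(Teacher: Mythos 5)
The survey itself does not prove Theorem \ref{thm:hull_iff} (it is quoted from \cite{Sisto-Zalloum-22}), so your proposal has to stand on its own. Your backward direction is fine and genuinely routine: since $e$ is isometric and $\dist_\infty(e(z),e(p))=\dist(z,p)$, a ball of $X$ disjoint from $\gamma$ maps into a ball of $E(X)$ of the same radius disjoint from $e(\gamma)$, and (near-)nearest points correspond under $e$.

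The forward direction, however, contains a genuine gap, and it is exactly the heart of the theorem. First, the final step as written --- invoking $D$-strong contraction in $X$ on ``a suitable ball containing (a geodesic between) $x_1,y_1,x_2,y_2$, whose size is governed by $\dist_\infty(f_1,f_2)\le 2r$'' --- is unjustified and in general false. The witness pairs supplied by Lemma \ref{lem:summary}(3) are in no way controlled by $r$: for a fixed $f_i$ they are highly non-unique, $x_i$ and $y_i$ can be arbitrarily far from each other and from the witnesses of the other function, and nothing prevents them from lying on, or arbitrarily close to, $\gamma$ itself. Hence a ball containing these four points (or a geodesic between them) is neither of radius comparable to $r$ nor disjoint from $\gamma$, so the contraction hypothesis cannot be applied to it; note also that the strength of Lemma \ref{lem:summary}(3) lies in the fact that it holds for \emph{every} base point $x\in X$, and fixing a single pair per $f_i$ does not determine where $f_i$ ``sits over'' $\gamma$. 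Second, you explicitly defer the decisive estimate (the ``uniform trade-off'' between $f_i(p_i)$, $f_i(x_i)$, $f_i(y_i)$ producing an honest obstructing ball in all regimes); but that trade-off is precisely the content of the theorem, since the whole difficulty is that generic points of $B_{E(X)}(f_0,r)$ have no nearby representative in $e(X)$. So what you have is a plan that names the obstacle rather than a proof that overcomes it. A more workable route, in the spirit of the cited paper, is to exploit that $E(X)$ is itself injective: by the companion statement quoted just above in the survey (Morse is equivalent to strongly contracting in injective spaces), it suffices to promote a weaker, more robust property of $e(\gamma)$ in $E(X)$, such as a Morse/bottleneck property, where the test quasi-geodesics have their endpoints on $e(\gamma)\subset e(X)$, i.e.\ at honest points of $X$; there the tightness property of Lemma \ref{lem:summary}(3), applied at well-chosen base points, can actually be brought to bear, whereas attacking the ball definition of contraction in $E(X)$ head-on forces you to control arbitrary points of $E(X)$, which is where your argument stalls.
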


In principle, the above theorem provides a test to whether a geodesic line $l$ in an arbitrary metric space $X$ is strongly contracting, namely, by considering its image $i(l)$ in the injective hull $E(X)$ and checking whether its strongly contracting there. 

However, in practice, this requires understanding what $E(X)$ looks like in advance, and that's usually pretty hard (precisely because of how accommodating and big injective spaces are). The upshot of the above theorem is indeed the opposite; it sheds a light on the shape of the injective hull $E(X)$ of a geodesic metric space $X$ containing a strongly contracting geodesic $l \subset X$. That is, the fact that $l$ remains strongly contracting in $E(X)$ means that the strong bottleneck property met by $l$ in $X$ persists in its injective hull $E(X),$ namely, if $a,b \in E(X)$ project far along $e(l)$, then every geodesic connecting $a,b$ must pass close to the geodesic $e(l)$. The fact that such a bottleneck property persists means that not too many extra points where added to $X$ \emph{along the geodesic} $l$; which can be viewed as saying, the region around $l$ was already very close to being injective. This is not very surprising given the fact that strongly contracting geodesics characterize hyperbolicity (a space $X$ is hyperbolic if and only if there exists some $D \geq 0$ such that every geodesic is $D$-strongly contracting) and hyperbolic spaces are known to be dense in their injective hulls.

Another interpretation of the above theorem is the following: since strongly contracting geodesics $l \subset X$ already possess very strong convexity-like properties similar to these present in hyperbolic spaces, convexifying the space $X$ (embedding it in its injective hull $E(X))$ doesn't amount to adding too many points along such geodesics. In fact, as pointed out in \cite{Sisto-Zalloum-22}, the above Theorem \ref{thm:hull_iff} recovers Lang's theorem \cite{LANG2013} that hyperbolic spaces are coarsely dense in their injective hull.


Prior to \cite{Sisto-Zalloum-22}, the primary two classes of examples where the notions of Morse and strongly contracting were known to agree are CAT(0) spaces with their CAT(0) distance \cite{charneysultan:contracting}, and CAT(0) cube complexes  with their combinatorial distance \cite{Genevois2020}.

The mere existence of Morse geodesics in a metric space $X$ implies that $X$ is ``hyperbolic-like"; at least in the regions where these geodesics are present. When $X$ is also CAT(0) or injective, the additional non-positive curvature coming from $X$ combine with the Morse property upgrading it to a strongly contracting one.

In particular, both the CAT(0) and injective notions are strong enough to upgrade Morse geodesics to strongly contracting ones; a natural question to ask is, which among the two notions is a stronger notion of non-positive curvature, the CAT(0) notion or the injective one?

\subsubsection{Are injective spaces more or less hyperbolic than CAT(0) spaces? Two contrasting perspectives} If we agree to place Gromov hyperbolic spaces as a reference to how ``negatively-curved" a geodesic metric space is, then one can make the argument that injective metric spaces are more non-positively curved than CAT(0) spaces. Let's demonstrate this with an example. Suppose that $X$ is a CAT(0) cube complex and $c=[0,1]^n \subset X$ is a cube of dimension $n\geq 3$. Such a cube is not $1$-hyperbolic since there are (combinatorial) geodesics $\alpha,\beta$ connecting its end points $(0,0 \cdots, 0), (1,1 \cdots, 1)$ that get quite far from each other, depending on $n$. However, the cube $c$ --and every other cube regardless of its dimension-- is $1$-hyperbolic with respect to the injective $\dist_\infty$-distance, since every pair of vertices in $c$ are at distance $1.$ Hence, the injective $\dist_{\infty}$-distance can be viewed as being more hyperbolic at a local level than the combinatorial distance is.

In fact, suppose that we start with a CAT(0) cube complex $X$ which is not $\delta$-hyperbolic, $\dim(X) \geq 100\delta$ and assume that we wish to cone off certain pieces of $X$ so that the resulting space $Y=X/\sim$ is $\delta$-hyperbolic (one can keep in mind a simple example, for instance, a tree of flats $\mathbb{R}^k$ with $k \geq 100\delta$). This conning off can be viewed as a two-step process, each of which brings $X$ closer to being hyperbolic. The first step is to address the local failure of hyperbolicity by conning off every $n$-cube for all $n \geq 1$, and this can be done by equipping $X$ with its injective $\dist_\infty$-metric. The second step then is to address to global failure of hyperbolicity and this can be done by conning off each flat or each product region. This two-step process description suggests that the $\dist_{\infty}$-distance can be viewed as being ``one step closer" to hyperbolicity compared to the usual combinatorial distance.

On the other hand, one advantage of CAT(0) spaces over injective metric spaces is that CAT(0) spaces come with their \emph{Euclidean} or CAT(0) distance $\dist_{\mathbb{E}}$ that makes $(X,\dist_{\mathbb{E}})$ into a CAT(0) space, in particular CAT(0) cube complexes admit such a distance. Unlike the injective distance on a CAT(0) cube complex $X$, the Euclidean distance produces unique geodesics, and triangles in $(X, \dist_{\mathbb{E}})$ are at most as fat as triangles in $\mathbb{R}^2$, while the ones in 
$(X, \dist_{\infty})$ can be very fat. Finally, hyperplanes in CAT(0) cube complexes are convex with respect to both the CAT(0) and combinatorial distances, but not with respect to the $\dist_\infty$-geodesics. The existence of fat triangles, non-uniqueness of geodesics, failure of convexity with respect to hyperplanes can be viewed as pushing us a step further from hyperbolicity, contrasting the claim in the previous paragraph. We shall leave it to the reader to choose a perspective on this. One interpretation of the aforementioned could be that the universal property of an injective metric space $X$ forces it to include more things than necessary leading to too many geodesics in $X$ which in turns leads to the existence of fat of triangles and lack of convex-type properties.

\begin{remark} After finishing the first draft of this survey, Anthony Genevois pointed out the following phenomenal that makes it clear how injectivity is more hyperbolic at a local level supporting the two-step process described above. First, a \emph{grid} of hyperplanes is the data of two chains $c=\{h_1,\cdots h_m\}, c'=\{k_1, \cdots k_n\}$ such that $h_i \cap h_j \neq \emptyset$ for all $1 \leq i \leq m $ and $1 \leq j \leq n.$ A grid is said to be \emph{$L$-thin} if $\text{min}\{m,n\} \leq L.$ We state two theorems characterizng hyperbolicity of a CAT(0) cube complex in terms of thin grids.

\begin{theorem}[{\cite[Theorem 3.1 and 3.6]{genevois:hyperbolicities}}]  Let $X$ be an arbitrary CAT(0) cube complex and let $\dist, \dist_\infty$ denote the combinatorial and injective metrics, respectively. We have the following:

\begin{enumerate}
    \item $(X,\dist)$ is hyperbolic if and only if its grids are uniformly thin and $X$ is finite dimensional.

 \item $(X,\dist_\infty)$ is hyperbolic if and only if its grids are uniformly thin.
 
 \end{enumerate}
    
\end{theorem}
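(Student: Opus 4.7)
The plan is to prove both biconditionals by analyzing how grids of hyperplanes obstruct hyperbolicity, with the two parts differing precisely because $\dist$ records every hyperplane crossed while $\dist_\infty$ records only chains and thus ``forgets'' cubes.

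For the forward direction of each part, I argue by contradiction. Given an $m \times n$ grid of intersecting chains $c = \{h_1, \ldots, h_m\}$ and $c' = \{k_1, \ldots, k_n\}$ with $\min\{m,n\}$ large, the Helly property for pairwise-intersecting half-spaces/hyperplanes (available since these are convex) carves out an $m \times n$ combinatorial rectangular subcomplex in $X$ on which $\dist$ restricts to $\ell^1$ and $\dist_\infty$ restricts to $\ell^\infty$ (using the chain-distance formula). Since neither $(\mathbb{Z}^2, \ell^1)$ nor $(\mathbb{Z}^2, \ell^\infty)$ is $\delta$-hyperbolic for any fixed $\delta$ as $n \to \infty$, we obtain arbitrarily fat geodesic triangles in whichever metric we care about, contradicting hyperbolicity. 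For part (1) only, I additionally observe that an $n$-cube carries $n$ pairwise intersecting hyperplanes all separating its two antipodal vertices, contributing $n$ to the combinatorial distance; so infinite dimension alone forces arbitrarily fat triangles in $(X, \dist)$. This explains why finite-dimensionality appears in (1) but is redundant in (2): under $\dist_\infty$, a whole $n$-cube has diameter one and contributes nothing to non-hyperbolicity.

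For the converse direction of each part, I proceed by contrapositive: non-hyperbolicity produces a large grid. Suppose $(X, \dist_\star)$, for $\star \in \{\emptyset, \infty\}$, is not hyperbolic. Then there exist arbitrarily fat geodesic triangles, from which I select two ``parallel'' combinatorial geodesics $\gamma_1, \gamma_2$ with comparable, close endpoints that remain far apart at some midpoint. I then extract two chains: the hyperplanes crossed by $\gamma_1$ but not by $\gamma_2$ form a chain $c$ (chains because geodesics in a CAT(0) cube complex cross each hyperplane at most once), and similarly $\gamma_2$ contributes $c'$. Closeness of endpoints, together with a Helly-style argument for the half-spaces on either side of each $h \in c$ and $k \in c'$, forces each $h \in c$ to meet each $k \in c'$, producing a grid whose dimensions grow with the width of the triangle. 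For part (1), the finite-dimensionality assumption is invoked to rule out the alternative scenario that the non-thinness in $(X,\dist)$ comes from a large embedded cube rather than from a genuine grid of transverse chains.

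The main obstacle is the converse, specifically translating a metric failure of hyperbolicity into the purely combinatorial statement that two long chains pairwise intersect. The chain-distance characterization $\dist_\infty(x,y) = \sup\{|c|:c \text{ is a chain separating } x,y\}$ is the key bridge between the injective geometry and hyperplane combinatorics in part (2); for part (1), the analogous bridge is immediate but one must then carefully separate the two possible sources of non-thin grids (large cubes versus genuinely disjoint chains that happen to cross) using the dimension bound, since in the combinatorial world both contribute to fat triangles.
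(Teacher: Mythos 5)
This statement is quoted by the survey from Genevois \cite{genevois:hyperbolicities} (Theorems 3.1 and 3.6) and is not proved in the paper, so your proposal can only be measured against the cited source, where both hard implications are genuine theorems rather than quick consequences of the definitions. Your conceptual framing is right and matches the survey's discussion: $\dist$ counts all separating hyperplanes while $\dist_\infty$ counts only chains, which is exactly why arbitrarily large cubes ruin hyperbolicity of $(X,\dist)$ but are invisible to $(X,\dist_\infty)$, and why finite dimensionality appears only in item (1).

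Both directions of your argument, however, have gaps at precisely the technical heart. Forward direction: from an $m\times n$ grid, the Helly property for halfspaces gives you, for each cell, some vertex in $h_p^+\cap h_{p+1}^-\cap k_q^+\cap k_{q+1}^-$, and the separating grid hyperplanes give \emph{lower} bounds on distances between such vertices; but nothing bounds these distances from above, since hyperplanes outside the grid may separate the chosen vertices arbitrarily much. So the claim that Helly ``carves out'' a rectangular subcomplex on which $\dist$ restricts to $\ell^1$ and $\dist_\infty$ to $\ell^\infty$ is unjustified; extracting a fat flat (or quasi-flat) rectangle, or some other configuration violating $\delta$-thinness, from a grid is the actual content of the cited proof, not a one-line consequence of convexity. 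Converse direction: the chain extraction is wrong as stated. That a combinatorial geodesic crosses each hyperplane at most once does not make the hyperplanes it crosses (or those crossed by $\gamma_1$ but not $\gamma_2$) pairwise disjoint --- the diagonal of an $n$-cube crosses $n$ pairwise-crossing hyperplanes --- so ``the hyperplanes crossed by $\gamma_1$ but not by $\gamma_2$ form a chain'' fails, and with it your construction of the grid. Note also that no single hyperplane can separate a point of one side of a geodesic bigon from the entire other side (it would have to be crossed twice by the first side), so fat triangles do not directly hand you separating hyperplanes that you can organize into two transverse chains; a finer argument is needed, and this is exactly where finite dimensionality (for $\dist$) and the chain description of $\dist_\infty$ do real work in Genevois's proof. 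A smaller issue in item (2): you test hyperbolicity of $(X,\dist_\infty)$ with combinatorial geodesics, but these need not be $\dist_\infty$-geodesics (a two-edge path in a square is an $\ell^1$ geodesic while the $\ell^\infty$ distance between its endpoints is $1$), so the fat-triangle bookkeeping must be carried out in the correct metric or via a quasi-geodesic criterion.
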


\end{remark}

    \section{CAT(0) cube complexes and their hyperplanes}\label{sec:CCC and their hyperplanes}
        In part due to their fundamental role in the resolution of the virtual Haken's conjecture \cite{wise:structure,agol:virtual}, CAT(0) cube complexes have been a central object of study in geometric group theory. When thinking about a CAT(0) cube complex, one is meant to imagine a simply connected space obtained by gluing unit Euclidean cubes $[0,1]^i$ isometrically along faces, with no ``empty cubes", that is: if the faces of a cube are present, then so is the cube. The standard metric a CAT(0) cube complex is typically equipped with is the \emph{combinatorial} or \emph{taxi cap} metric denoted $\dist$: You equip each cube $[0,1]^i \subset X$ with the metric given by $$\dist((x_1,\cdots x_i),(y_1,\cdots y_i))= \underset{1 \leq j \leq i}{\sum}|x_j-y_j|,$$ and extend the metric to the cube complex $X$ in the obvious way. Given three vertices $x_1,x_2,x_3$ in a CAT(0) cube complex, one can assign a fourth point $m=m(x_1,x_2,x_3)$ called the \emph{median} of $x_1,x_2,x_3$ and it's defined to be the (unique) vertex satisfying $\dist(x_i,m)+\dist(m,x_j)=\dist(x_i,x_j)$ for all $i \neq j.$

    A \emph{midcube} is defined by restricting one of the coordinates in a cube $[0,1]^i$ to $\frac{1}{2}$. A \emph{hyperplane} is what you obtain from the following process: given a midcube $c$, if $c$ touches a midcube $c'$ in an adjacent cube, you consider $c \cup c'$ and you continue inductively, that is, if $c'$ touches a midcube $c''$ in another adjacent cube, we consider $c \cup c' \cup c'',$ etc. More formally, a hyperplane $h$ is a subspace of $X$ such that for any (closed) cube $c,$ the intersection $h\cap c$ is either a midcube or it's empty. See Figure \ref{fig:A hyperplane}. Each hyperplane $h$ defines two disjoint path-connected \emph{half spaces} $h^+,h^-$ with $X=h^+ \sqcup h^-$. If $x \in h^+, y \in h^-$, we say that $h$ \emph{separates} $x,y.$ Similarly, $h$ separates $A,B \subset X$ if $A \subseteq h^+, B \subseteq h^-.$ Finally a collection of hyperplanes $\{h_1,\cdots h_n\}$ is said to be a \emph{chain} if each $h_i$ separates $h_{i-1}, h_{i+1}.$

    \subsection{Fundamental geometric notions of a cube complex via hyperplanes}\label{subsec:fundemental_CCC}

    A groundbreaking work of Sageev \cite{sageev:ends,sageev:codimension} shows that the geometry of a CAT(0) cube complex is fully recorded in their hyperplanes and the way they interact with another. For example, although distances, geodesics, convex hulls, medians and projections are all defined purely via geometric terms, they admit the following descriptions which only involve hyperplanes:
    

\vspace{2mm}

\begin{itemize}
    \item \textbf{Distance:} $\dist(x,y)=|\text{hyperplanes that separate }x,y|,$ for any vertices $x,y \in X$.

    \item \textbf{Geodesics:} An edge path is a geodesic if and only if it never crosses a hyperplane twice.

        \item \textbf{Medians:} The median of any $x,y,z \in X$ is the intersection of all half spaces containing the majority of $x,y,z$.

    \item \textbf{Convex hulls:} The convex hull of a set $A$ (which recall is the smallest convex set containing $A)$ is the intersection of all half spaces that properly contain $A$.

    \item \textbf{Projections/gates}: The vertex $P_C(x)$ minimizing the distance from a vertex $x$ to a convex set $C$ is characterized by being the unique vertex of $C$ such that every hyperplane $h$ separating $x,P_C(x)$ separates $x,C.$

    \end{itemize}





\subsection{What makes hyperplanes so appealing?} With the goal of understanding a certain object in math, a very typical approach is to decompose it into smaller simpler pieces, understand these pieces and assemble them back together to get an understanding of the underlying object. The idea is to decompose an object into \emph{simple building blocks} and by understanding these simple building blocks and how they interact with one another, you hope to get a grasp of the original object. The simple building blocks for CAT(0) cube complexes are hyperplanes. They are simple since they are purely combinatorial objects that record binary information: each point $x \in X$ is either in the ``left" half space $h^-$ or the ``right" half space $h^+$ of a hyperplane $h$. We provide various examples of the powerful structures and constructions that yield from hyperplanes and the binary nature of their half spaces.
\vspace{2mm}

\noindent 1) \textbf{Medians:} Given three vertices $x,y,z \in X$, suppose that you wish to find the ``center" of such three points. Since $X=h^+ \cup h^-$, at least two of the three points must live in the same half space. Before we move on, let's emphasize that the conclusion that at least two of $x,y,z$ are in the same half space was only possible due to the binary nature of our simple building blocks. Let's continue, having established that two of our points $x,y,z$ must be in the same half space of $h$, it seems reasonable that, whatever the center is, it should lie in the half space that contains the majority of our three points, say $h^+.$ Hence, in order to obtain the center, we first choose for each hyperplane the half space containing the majority of $x,y,z$ and take the intersection of all such half spaces (it's not trivial that such an intersection is non-empty and unique, but it is). Such a center is called the \emph{median} for $x,y,z$ and is denoted $m(x,y,z).$

\vspace{2mm}

\noindent 2) \textbf{Hyperbolicity.} Gromov hyperbolicity of a geodesic metric space $X$ is also a purely geometric property, and in the context of CAT(0) cube complexes, hyperbolicity admits the following characterization based on hyperplanes. First, recall that a pair of disjoint hyperplanes $h,k$ is said to be \emph{$L$-separated} if each chain of hyperplanes $c$ meeting both $h,k$ is of cardinality at most $L.$ This property can be thought of as saying that there is a bottleneck of width $ \leq L$ between the two hyperplanes $h,k$. A finite-dimensional CAT(0) cube complex is $\delta$-hyperbolic if and only if every pair of hyperplanes $h,k$ with $\dist(h,k)> \delta'$ is $L$-separated, where the constants $L,\delta, \delta'$ determine each other. Although this is an easy good exercise, the proof of the statement is given in Lemma 6.51 of \cite{genevois:hyperbolicities}.
    
\vspace{2mm}

 \noindent 3) \textbf{Distances via colored walls.} If you fix any subcollection $\calC$ of hyperplanes in a CAT(0) cube complex $X$, then, you can define a new distance $\dist_\calC$ on $X$ by declaring $$\dist_\calC(a,b)=1+|\text{ hyperplanes in }\calC \text{ that separate } a,b|,$$ for any distinct vertices $a,b \in X$ and $\dist_\calC(x,x)=0$ for any vertex $x \in X.$ Namely, instead of counting all the hyperplanes separating $x,y$--which would give their usual combinatorial distance-- we count ``colored" hyperplanes separating $x,y,$ that is, we only count the hyperplanes coming from our special collection $\calC.$

The condition that is usually trickiest to check when establishing that a certain function defines a metric is the triangle inequality, however, the binary nature of our hyperplanes provides a very simple proof of the triangle inequality for $\dist_\calC$. Let $x,y,z$ be vertices of $X$ and let $h \in \calC$ be a hyperplane separating $x,z$, in particular, $h$ contributes to $\dist_\calC(x,z)$. The third vertex $y$ must live on exactly one side of $h$, the side containing $x$ or the one containing $z$. In other words, this hyperplane must either separate $x,y$, in which which case it contributes to $\dist_\calC (x,y)$ or it separates $y,z$ in which case it contributes to $\dist_\calC (y,z)$; this establishes the triangle inequality. The reason for adding the plus $1$ here is that two points $x,y$ may not be separated by any hyperplane from our colored collection $\calC$.


This idea was exploited in work of Genevois \cite{genevois:hyperbolicities} where, for any CAT(0) cube complex $X,$ the author defines a distance $\dist_L$ by $$\dist_L(x,y)=\text{max}\{|c| \,:\, c \text{ is a chain whose elements are pair-wise } L-\text{separated}\}.$$

 The metric space $(X,\dist_L)$ is hyperbolic with constants depending only on $L,$ and for each rank-one element (a semi-simple isometry none of whose axes bounds a half flat) $g$, there exists an $L$ such that $g$ acts loxodromically on $X_L.$ The utility of this construction is that it provides an ``increasing" collection of hyperbolic spaces $(X,d_L)$ that collectively captures the hyperbolic aspects of the underlying CAT(0) cube complex. The term increasing comes form the fact that the identity map $i:(X,\dist_{L+1}) \rightarrow (X,\dist_L)$ is 1-Lipshitz and if $g$ acts loxodromically on $X_L$, then it acts loxodromaiclly on all $X_{L'}$ with $L' \geq L$. 

\vspace{2mm}

 \noindent 4) \textbf{Orientations.} Our claim that hyperplanes describe the entire geometry of a CAT(0) is perhaps most evident in this example. First, observe that a vertex $x$ of a CAT(0) cube complex is determined uniquely by orienting each hyperplane towards the half space that contains that vertex $x$, said differently, a vertex $x \in X$ is exactly the intersection of all half spaces that contain that vertex.

     \begin{figure}[ht]
   \includegraphics[width=14cm, trim = 1cm 12cm 6cm 3cm]{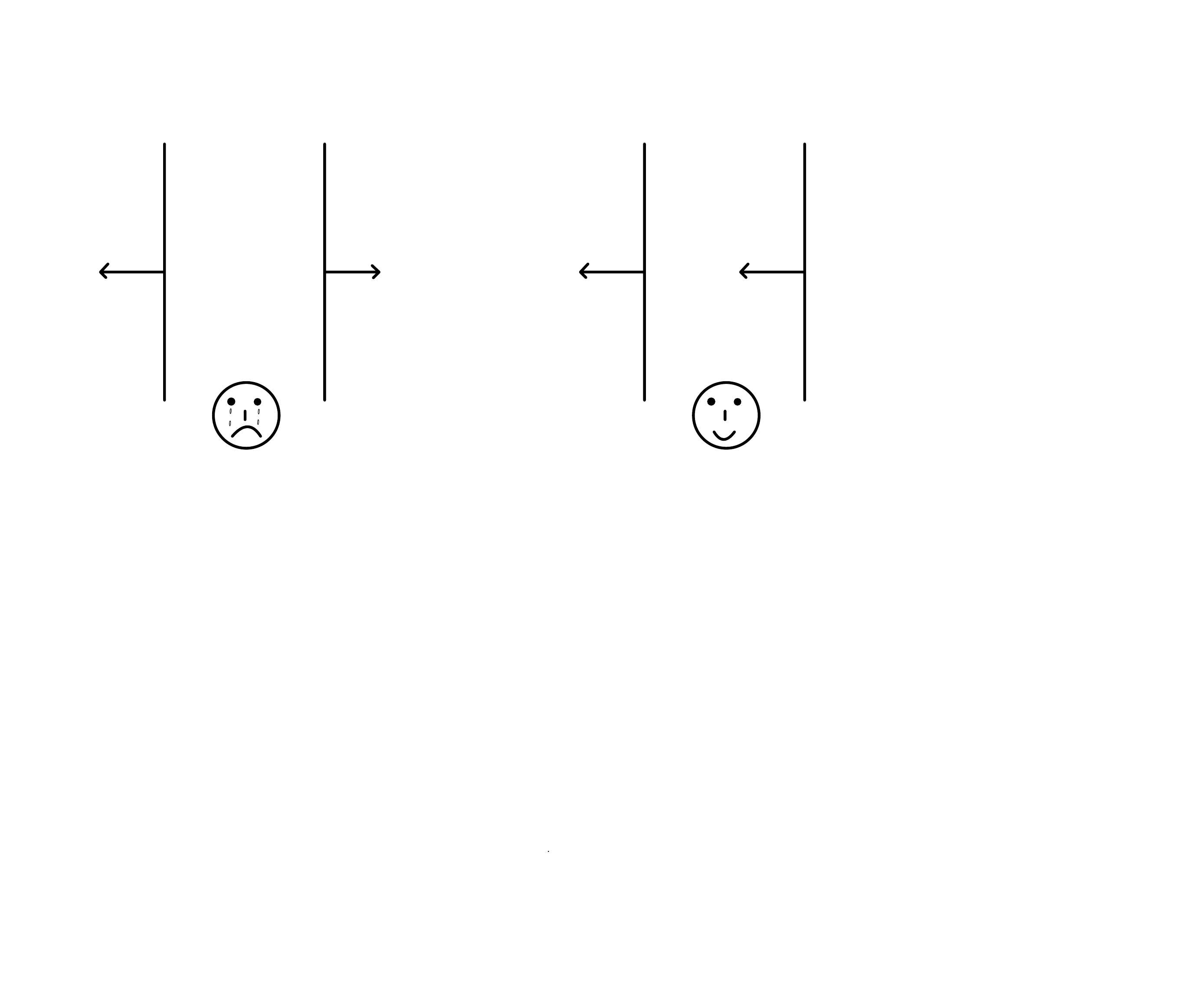}\centering
\caption{The two half spaces chosen in the left picture can't be part of a coherent orientation, while the ones on the right can.} \label{fig:Happy}
\end{figure}

Now, if $\calH$ is the collection of all hyperplanes in $X$, then each vertex $x \in X$ determines a unique point in $\{0,1\}^\calH$, by choosing for each hyperplane $h$ the half space containing $x$ and denoting $\{h^+,h^-\}$ by $\{0,1\}.$ It is natural to ask whether every point of $\{0,1\}^\calH$ defines a vertex in $X$, but it's immediate that the answer to this is negative, for instance, if $h,k$ are two disjoint hyperplanes pointing at opposite directions, then they can't be pointing at the same vertex, in other words, they can't define a vertex $x \in X,$ see Figure \ref{fig:Happy}.

This motivates the definition of a \emph{coherent orientation} which is a special point in $\{0,1\}^\calH$. More precisely, a coherent orientation is a collection of half spaces $\calC$ such that:

\noindent 1) \underline{Exhaustion:} For each hyperplane $h \in \calH$, exactly one half space of $\{h^+,h^-\}$ is in $\calC$.
\vspace{1mm}

 \noindent 2) \underline{Upwards closure:} If $h^+ \in \calC$ and $h^+ \subset k^+,$ then $k^+ \in \calC$.
\vspace{1mm}

 \noindent 3) \underline{Termination:} Every descending chain $\cdots h_1^+ \supseteq h_2^+ \supseteq \cdots $ of $\calC$ has a minimal element.

 \vspace{1mm}

It is immediate that there is a 1-1 correspondence between coherent orientations on hyperplanes $\calH$ and vertices of a finite dimensional CAT(0) cube complex $X.$ Further, an edge $e$ in a cube complex corresponds to two coherent orientations $\calC_1, \calC_2$ differing on exactly one hyperplane.

\vspace{2mm}

 \noindent 5) \textbf{Sageev's construction.} The above example shows the intimate connection between a CAT(0) cube complex and its set of hyperplanes, but the connection is indeed deeper. Choose a collection of bi-partitions $\calH$ of $\mathbb{N}$, that is, each element of $\calH$ is a set $W=\{W^+,W^-\}$ with $\mathbb{N}=W^+ \cup W^-$ and $W^+,W^-$ both non-empty. Suppose that $\calH$ satisfies the property that for any distinct $x,y \in \mathbb{N}$, there are only finitely many elements $W=\{W^+,W^-\}$ of $\calH$ with $x \in W^+$ and $y \in W^-.$ Coherent orientations are defined exactly as in the previous item 6, with $\mathbb{N}$ replacing $X$. Now, we can define a CAT(0) cube complex $X$ via the collection $\calH$ as follows:

\vspace{2mm}

\noindent 1) \underline{Vertices:} The vertex set of $X$ is defined to be all possible coherent orientations on $\calH$.

    \vspace{2mm}
    
   \noindent 2) \underline{Edges:} Two coherent orientations are connected by an edge if they differ on exactly one element of $\calH.$

   \vspace{2mm}

    \noindent 3) \underline{Cubes:} We glue a cube in whenever the faces of the cube are present.

    \vspace{2mm}

It is not immediately clear that the object you get from the above construction is a CAT(0) cube complex, but it is, see \cite{CHATTERJI2005} for an excellent description of the construction.

In short, starting with a collection of bi-partitions $\calH$ on $\mathbb{N}$ satisfying the consistency condition that each pair of points in $\mathbb{N}$ differ on only finitely many $W =\{W^+,W^-\} \in \calH$, there is canonical CAT(0) cube complex associated to $\calH.$ In fact, every CAT(0) cube complex $X$ arises in such a fashion, first, by fixing a bijection $f:\mathbb{N} \rightarrow X^0$ (or a surjection if $X^0$ is finite) between the vertex set of $X$ and the natural numbers $\mathbb{N}$, the collection of $X$-hyperplanes naturally define a family of bi-partitions on $\mathbb{N}$ via the map $f.$ Applying Sageev's construction to such a family of bi-partitions recovers the original CAT(0) cube complex $X.$ This exact fact is what I find most appealing about CAT(0) cube complexes: each canonically arises from a collection of bi-partitions of $\mathbb{N}$ satisfying a simple consistency condition.

\subsection{The median perspective} In addition to hyperplanes, the median structure on a CAT(0) cube complex also records various interesting aspects of its geometry. Namely, half spaces, geodesics, convex hulls and combinatorial projections in CAT(0) cube complexes also admit the following median descriptions.

\vspace{2mm}

\noindent 1) \underline{Half spaces via medians:} For an edge $e$ connecting two vertices $a_1,a_2 \in X$, if $h$ is a hyperplane dual to $e$ (meaning that $h$ separates $a_1,a_2)$, the two half spaces corresponding to $h$ are exactly the sets $\{x \in X| m(a_1,a_2,x)=a_1\}$ and $\{x \in X| m(a_1,a_2,x)=a_2\}$.
\vspace{2mm}

\noindent 2) \underline{Geodesics via medians:} The collection of geodesics connecting a pair of vertices $x,y \in X$ coincides with their \emph{median interval} defined as $$[x,y]=\{m(x,y,z)| z \in X\}.$$

\vspace{2mm}

\noindent 3) \underline{Convex hulls via medians:} Let $A^0=A,$ $A^1=\cup_{x,y \in A}[x,y]$ and define $A^i= (A^{i-1})^1$. It is well-known that (for instance, by \cite{Bowditch2019CONVEXITY}) if the CAT(0) cube complex $X$ is of dimension $n$, then we have $A^n=A^{n+1}$ and in particular, $\hull(A)=A^n$.

\vspace{2mm}

\noindent 4) \underline{Combinatorial projections:} For any (combinatorially) convex set $Y \subset X,$ and any vertex $x \in X,$ there is a unique vertex $P_Y(x) \in X$, called the (combinatorial) \emph{projection} or \emph{gate}, which realizes the distance $\dist(x,Y)$, that is, $\dist(x, P_Y(x))=\dist(x,Y).$ This gate is characterized by hyperplanes as we stated above, but it also admits a description via medians, namely, we have $$P_Y(x)=\cap_{y \in Y}[x,y].$$ One reference for this statement is \cite{Durham-Zalloum22}, but the statement and proof were certainly well-known and likely appear in other places, see \cite{Haglund2007}.
\vspace{2mm}

\noindent 5) \underline{CAT(0) cube complexes via medians:} Recall that a graph $\Gamma$ is said to be a \emph{median graph} if for any three vertices $x_1,x_2,x_3 \in \Gamma$, there is a unique vertex $m=m(x_1,x_2,x_3)$ called the \emph{median} of $x_1,x_2,x_3$ satisfying $\dist(x_i,m)+\dist(m,x_j)=\dist(x_i,x_j)$ for all $i \neq j.$ The intimate connection between CAT(0) cube complex and their medians is perhaps most apparent by the following theorem.

\begin{theorem}[{\cite{Chepoi2000GraphsOS}}] A graph $\Gamma$ is median if and only if it is the 1-skeleton of a (uniquely determined) CAT(0) cube complex.
\end{theorem}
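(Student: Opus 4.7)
The plan is to prove the two directions separately, relying heavily on the hyperplane/Sageev perspective described in Section \ref{sec:CCC and their hyperplanes}.

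For the easy direction ($\Leftarrow$), suppose $\Gamma$ is the $1$-skeleton of a CAT(0) cube complex $X$ with combinatorial metric $\dist$. Given vertices $x_1, x_2, x_3$, I would orient each hyperplane $h$ of $X$ towards the half-space containing at least two of $\{x_1,x_2,x_3\}$; the intersection of these chosen half-spaces is a single vertex $m$, as discussed in the median-via-hyperplanes bullet. For each pair $i\neq j$, every hyperplane separating $x_i$ from $x_j$ either separates $x_i$ from $m$ or separates $m$ from $x_j$ (exactly one of these, by the binary nature of half-spaces), which gives the additive identity $\dist(x_i,x_j)=\dist(x_i,m)+\dist(m,x_j)$ using the distance-via-hyperplanes formula. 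Uniqueness follows because any candidate median must lie in every half-space containing a majority of $\{x_1,x_2,x_3\}$.

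For the harder direction ($\Rightarrow$), the strategy is to manufacture a wall space from $\Gamma$ and feed it into Sageev's construction. First I would define an equivalence relation $\sim$ on edges of $\Gamma$ generated by declaring two edges equivalent if they are opposite sides of a $4$-cycle (a square), and define a \emph{wall} to be an equivalence class. The key technical step is to show that each wall $W$ separates $\Gamma$ into exactly two connected pieces $W^+, W^-$, so that $W$ becomes a bipartition of the vertex set. This is where the median axiom really enters: using the uniqueness and additivity of medians, one shows that any edge-path crossing a wall an odd number of times cannot close up, and that one cannot leave and return to the same side without recrossing the wall. The pairwise-finiteness condition needed for Sageev's construction (finitely many walls separating any two vertices) follows from the fact that a shortest path between $x,y$ crosses each wall at most once and $\Gamma$ is locally finite around such paths.

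Once the bipartitions are in hand, I would apply Sageev's construction as described in the excerpt to produce a CAT(0) cube complex $X(\Gamma)$ whose vertices are the coherent orientations of the wall set. There is a natural map $\Gamma \to X(\Gamma)^{(1)}$ sending each vertex $v$ to the orientation ``point every wall at $v$''; I need to check this is a bijection on vertices (injectivity is obvious, surjectivity uses the termination axiom combined with the median structure to realize every coherent orientation as a vertex of $\Gamma$) and that it is an isomorphism of graphs (adjacent vertices in $\Gamma$ correspond exactly to orientations differing on one wall, namely the wall containing the connecting edge). Uniqueness of the CAT(0) cube complex follows because any CAT(0) cube complex is reconstructed from its hyperplane bipartition data via Sageev's construction, and those bipartitions are intrinsic to the $1$-skeleton through the square-equivalence relation.

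I expect the main obstacle to be the separation step: proving that each edge-parallelism class $W$ really induces a bipartition of $\Gamma$, i.e.\ that $\Gamma \setminus W$ has exactly two components. This requires a careful inductive argument using the median operation to ``fold'' any would-be violating cycle through a sequence of squares, ultimately reducing to an identity in the median algebra. Everything else — verifying upwards closure, termination, and that Sageev's output has $\Gamma$ as its $1$-skeleton — should be a relatively mechanical translation between the median algebra on $\Gamma$ and the combinatorics of coherent orientations.
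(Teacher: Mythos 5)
The survey does not actually prove this statement: it is imported from Chepoi, whose argument fills in cubes on the graph and verifies Gromov's criterion (simple connectivity plus flag links), so your wall-space/Sageev route is a genuinely different one. Your ($\Leftarrow$) direction is fine. In the ($\Rightarrow$) direction, however, there is a concrete step that fails as stated: the claim that every coherent orientation is realized by a vertex of $\Gamma$. The termination (descending chain) axiom does not force an orientation to be principal. Take $\Gamma$ to be the median graph of finitely supported $0$--$1$ sequences, i.e.\ the $1$-skeleton of the infinite-dimensional cube; its walls are the coordinate bipartitions $\{x_i=0\},\{x_i=1\}$, any two distinct walls cross, so upward closure and termination are vacuous, and the orientation choosing $x_i=1$ for every $i$ is coherent yet realized by no vertex (it even differs from every principal orientation on infinitely many walls). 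This is precisely why the survey's asserted bijection between coherent orientations and vertices carries the hypothesis ``finite dimensional'', a hypothesis a median graph need not satisfy. The standard repair is to restrict to the component of the dual complex containing the principal orientations (equivalently, orientations differing from some principal one on finitely many walls), and then prove by induction --- flipping a minimal disagreeing half-space, and using that half-spaces are convex and gated so that a minimal disagreement is dual to an edge at the current vertex --- that this component consists exactly of the principal orientations.

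Relatedly, the step you yourself flag as the main obstacle is where essentially all the content lies, and your sketch undersells what is needed: it is not enough that each edge-parallelism class separates $\Gamma$ into exactly two components; you need the two sides to be convex (indeed gated) subsets, since convexity is what makes pairwise-consistent choices of sides behave like a Helly family and is what the flipping argument above uses. This is a Djokovi\'c-type theorem for median graphs and requires a genuine inductive argument with the median operation; until it and the principal-component correction are written out, the proposal is a plan rather than a proof, although with those two ingredients the remaining verifications (edges correspond to orientations differing on one wall, cubes to families of pairwise-crossing walls, and uniqueness of the cube complex) do go through as you indicate.
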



\subsection{Summary}\label{subsec:Summary_CCC} If $\{X_i\}_{i \in I}$ is the collection of all countable CAT(0) cube complexes, and $\{\calH_i\}_{i \in I} $ is the collection of all bi-partitions of $\mathbb{N}$ satisfying certain consistency conditions, then we have the following 1-1 correspondence:

    $$\{\calH_i\}_{i \in I} \longleftrightarrow \{X_i\}_{i \in I},$$ and the geometry of an underlying CAT(0) cube complex $X$ is fully described by its collection of hyperplanes: $$\calH \longleftrightarrow X.$$

    As we mentioned, CAT(0) cube complexes can also seen via their medians, but hyperplanes are what's most relevant for the purpose of this survey.

\section{Hierarchical hyperbolicity from a median perspective}\label{sec:HHSes}

\subsection{What is an HHS?}\label{subsec:the_Def_HHS}   Motivated by the influential Masur-Minsky hierarchical machinery \cite{masurminsky:geometry:1}, \cite{masurminsky:geometry:2}, Behrstock, Hagen and Sisto introduced the notion of a \emph{hierarchically hyperbolic space}, abbreviated to HHS, as a general framework for studying spaces and groups that can be \textbf{disassembled} into a \textbf{coherent} collection of hyperbolic spaces \cite{HHS1}, \cite{HHS2}. Each HHS $X$ comes with a constant $E$ called the \emph{HHS constant} that describes the ``coarseness" of the space, that is, it bounds the error to which certain inequalities fail to hold. The following two items summarize the essence of an HHS:


        \vspace{2mm}

\noindent (1) \underline{Disassembling:} Each HHS $X$ comes with a (typically infinite) family of $(E,E)$-Lipshitz surjections $\{\pi_{U_i}:X \rightarrow U_i\}_{i \in I}$ onto a collection of $E$-hyperbolic spaces $\calU=\{U_i\}_{i\in I}$ called \emph{domains}, see Example \ref{ex:HHS_example} and Figure \ref{fig:First_HHS_Example}.


        \vspace{2mm}

\noindent (2) \underline{Coherence:} The maps $\{\pi_{U_i}: X \rightarrow U_i\}_{i \in I}$ interact with one another in a very specific and prescribed manner satisfying certain ``consistency" conditions. For instance, although the family of hyperbolic spaces $\calU=\{U_i\}_{i\in I}$ is typically infinite, it satisfies the following finiteness conditions:
\vspace{2mm}

    \begin{itemize}
\item For any $x,y \in X$ and any $K \geq E$, only finitely many domains $U \in \calU$ satisfy that $\dist(\pi_U(x), \pi_U(y))>K$. In words, $x,y$ have large projections only to finitely many domains, however, such a finite number is not uniform as it depends on the choices of $x,y \in X.$ In fact, if $\text{Rel}_{K}(x,y)$ denotes the collection of domains $U$ with $\dist(\pi_U(x), \pi_U(y))>K$, then the quantities $\dist(x,y)$ and $\sum_{U \in \text{Rel}_{K}(x,y)}\dist(\pi_U(x), \pi_U(y))$ are the same, up to an additive and multiplicative error depending only $K.$ See the second picture of Figure \ref{fig:First_HHS_Example}.

\item If $\alpha:[a,b] \rightarrow X$ is a coarsely continuous path in $X$ connecting $x,y$, then, the number of domains $U \in \calU$ where the projection of $\alpha(t)$ is simultaneously changing is uniformly bounded, see Example \ref{ex:HHS_example} and the picture at the bottom of Figure \ref{fig:First_HHS_Example}. Namely, as $\alpha(t)$ is moving in $X,$ if its projection is substantially changing in a domain $U$, then it must be coarsely still in all but uniformly finitely many domains $V \in \calU.$
    \end{itemize}

For an excellent overview of the theory of an HHS, see Section 3.1 in \cite{russell2020convexity} by Russell, Example 1.2.2 in \cite{SprianoTHesis} by Spriano and \cite{SistoWhatIs} by Sisto.




    \begin{figure}[ht]
   \includegraphics[width=8cm, trim = .001cm 3cm 4cm 3cm]{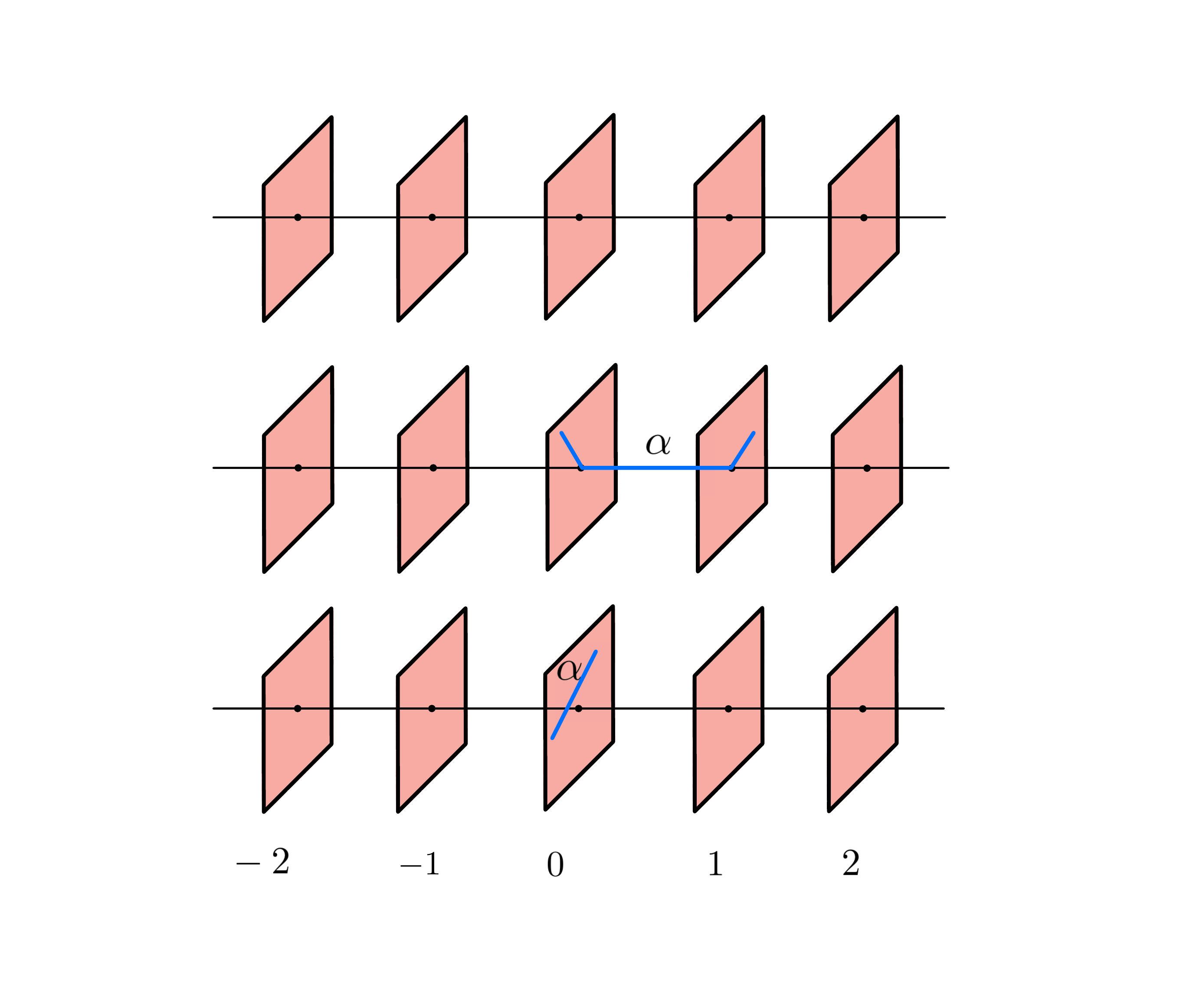}\centering
\caption{The picture on top is an infinite HHS consisting of a line $\calC S=\mathbb{R}$ and a copy of $\mathbb{R}^2$ glued at each integer point as shown. The path $\alpha$ in the second picture has non-trivial projection to only 5 domains, namely, the horizontal lines $U^h_0, U^h_1$, the vertical lines $U^v_0, U^v_1$ and the line $S.$ Finally, the picture in the bottom shows how the projections of a path $\alpha$ can be simultaneously (non-trivially) increasing along only uniformly finitely many domains; the number in this example is 2; the number 2 here is uniform and independent of the chosen path $\alpha.$} \label{fig:First_HHS_Example}
\end{figure}

\begin{example}\label{ex:HHS_example}
    Consider the space $X$ given in Figure \ref{fig:First_HHS_Example} which consists of a line $\mathbb{R}$ and a copy of $\mathbb{R}^2$ glued at each integer point $n \in \mathbb{Z}.$ The collection of hyperbolic spaces associated to $X$ is as follows. First, there is the line $\mathbb{R}$, denoted $\calC S$, where copies of $\mathbb{R}^2$ are all glued. Second, for each $\mathbb{R}^2$ glued at an integer $n \in \mathbb{Z}$, we associate two hyperbolic spaces which are the obvious the horizontal and vertical copies of $\mathbb{R}$ that define $\mathbb{R}^2$, we denote these by $U_n^h, U_n^v$; see Figure \ref{fig:First_HHS_Example}. The maps $\pi_U:X \rightarrow U$ are the usual nearest point projections.
\end{example}

 Each HHS is a \emph{coarse median space} \cite{HHS2}. These can be thought of as metric spaces where for each three points $x_1,x_2,x_3 \in X$, one can assign a point $\mu(x_1,x_2,x_3)$ called the \emph{coarse median}, or the \emph{median} for short, which behaves like a coarse center for $x_1,x_2,x_3$, see \cite{Bowditch13} for more details.
 
 A good example of a coarse median space to keep in mind is an $E$-Gromov hyperbolic space $X$. Namely, given three points $x_1,x_2,x_3 \in X$, as geodesics connecting $x_i,x_j$ all meet a ball $B$ of radius $2E,$ one can choose any point in $B$ and declare that point to be $\mu(x_1,x_2,x_3).$ Clearly, the choice of this point is not canonical (although, it's coarsely canonical).

 A subset $A$ of a coarse median space, an HHS for instance, is said to be \emph{$K$-median-convex} if for any $x,y \in A,$ and any $z \in X$, the point $\mu(x,y,z)$ lies in the $K$-neighborhood of $A.$ For two coarse median spaces $X_1,X_2$, a map $f:X_1 \rightarrow X_2$ is said to be $K$-median if $\dist (f(\mu(c_1,c_2,c_3)), \mu(f(c_1),f(c_2),f(c_3))) \leq K$. A path $\gamma:[a,b] \rightarrow X$ is said to be a \emph{$K$-median path} if it is a $K$-median map in the aforementioned sense. A subset $A$ of an HHS which is $K$-median convex with $K=E$, the HHS constant, will simply be referred to as a \emph{median convex} set, as  the constant is uniform. Similarly, a \emph{median path} is an $E$-median path. Every pair of points $x,y$ in an HHS can be connected by a collection of especially well-behaved paths called \emph{median paths} or \emph{hierarchy paths}. A path $\alpha:[a,b] \rightarrow X$ is said to be a median path if for any $\{t_i\}_{i=1}^3$ with $t_1<t_2<t_3$, we have $\dist(\alpha(t_2), \mu(\alpha(t_1), \alpha(t_2), \alpha(t_3)) \leq E,$ where $E$ is the HHS constant. These are the ``canonical" paths in an HHS, for instance, if a CAT(0) cube complex $X$ is also an HHS, then the collection of $0$-median paths connecting $x,y \in X$ is exactly the set of all geodesics that connect them.

Up to increasing $E$ by a uniform amount depending only on $X$, the \emph{median hull} of a set $A$ can be defined to be the intersection of all $E$-median-convex sets containing $A.$

 Finally, if $Y$ is a median convex set in an HHS, then each point $x \in X$ determines a coarsely unique point in $Y$ called the \emph{gate} of $x$ to $Y$, denoted by $P_Y(x)$ which is coarsely the unique point with $\dist(P_Y(x), \mu(x,y,P_Y(x)))<E$ for all $y \in Y$ (see Lemma 2.57 in \cite{Durham-Zalloum22}). This provides a coarse map $P_Y: X \rightarrow Y$ called the \emph{gate map}. The existence of the gate was originally established by Behrsotck-Hagen-Sisto \cite{HHS2} and was studied further in \cite{RST18} by Russell, Spriano and Tran.

    \subsection{Fundamental geometric notions of an HHS via domains}\label{subsec:fundemental_HHSes}

In Subsection \ref{subsec:fundemental_CCC}, we saw that the fundamental geometric notions of CAT(0) cube complexes including distances, medians, projections and convex hulls can all be described purely in terms of hyperplanes; the CAT(0) cube complex building blocks. Similarly, in HHSes, the fundamental geometric notions including distances, medians, gates and median hulls are all recorded via the associated collection of hyperbolic spaces $\calU=\{U_i\}_{i \in I}$; the HHS building blocks. Namely, we have the following.

 \begin{itemize}
    \item \textbf{Distance:} For each $K \geq E,$ the quantities $\dist(x,y)$ and $\sum_{U \in \text{Rel}_K(x,y)}\dist(\pi_U(x), \pi_U(y))$ coarsely agree up an additive and multiplicative constants depending only on $K$.

    \item \textbf{Median paths:} A path $\gamma \subset X$ is $K_1$-median if and only if $\pi_U(\gamma)$ is $K_2$-median for all $U \in \calU$, where $K_1,K_2$ determine each other.
\item \textbf{Medians:} The median of any $x,y,z \in X$ is coarsely the unique point that maps to the median of $\pi_U(x), \pi_U(y), \pi_U(z)$ for each $U \in \calU$.

    \item \textbf{Median hulls:} A point $x$ lies in the median hull of $A$ in $X$ if and only if $\pi_U(x)$ coarsely lies in the median hull of $\pi_U(A)$ for each $U \in \calU$.

    \item \textbf{Projections/gates}: The gate of a point $x$ to a median convex set $Y$ is coarsely the unique point $z \in X$ such that $\pi_U(z)$ coarsely agrees with the nearest point projection of $\pi_U(x)$ to $\pi_U(Y)$.

    \end{itemize}

For items 1,2,3 and 5 above, see \cite{HHS2} and for item 4 see \cite{RST18}.
 

    \subsection{What makes HHSes nice to work with?}\label{subsec:What_makes_them_nice} As we just mentioned, hyperbolic spaces are the simple building blocks for HHSes. They are simple as Gromov hyperbolicity is relatively well understood, and we are familiar with numerous powerful constructions and statements for these. For instance, we know that hyperbolic spaces are coarse median, their median hulls are quasi-isometric to trees, their median-convex sets satisfy a coarse Helly property, their Gromov boundary is particularly well-behaved and their quasi-geodesics satisfy local-to-global properties yielding regular languages for their geodesics. As we shall describe below, each of the aforementioned property for Gromov hyperbolic spaces will provide us with an analogous powerful statement for HHSes, and although the actual constructions are more involved, the core idea that lets us obtain such statements for HHSes is simple: import such a property from our building blocks $\calU=\{U_i\}_{i \in I}$ to the ambient HHS $X$ via the maps $\pi_{U_i}: X \rightarrow U_i$. The goal of the next two subsections is to discuss examples of such constructions and how one obtains them from the collection of Gromov hyperbolic spaces $\calU=\{U_i\}_{i \in I}.$

 The examples we provide come in two types, the first set of examples are constructions that utilize the \textbf{entire} collection of hyperbolic spaces $\calU=\{U_i\}_{i \in I}$ while the second only relies on an \textbf{exceptionally useful} hyperbolic space called the \emph{omniscient}, see \cite{ABD}. The reason this later hyperbolic space is of a particular power is that it simultaneously records all of the hyperbolicity of the ambient HHS, this will be discussed in more depth in the respective Subsection \ref{subsec:omniscient}.

\begin{warning} Although we haven't discussed this, but an HHS can often be equipped with different \emph{HHS structures}, namely, it's possible to have two different collections of hyperbolic spaces $\calU=\{U_i\}_{i \in I}, \calV=\{V_i\}_{i \in I}$ so that $X$ is an HHS with respect to either collection. We will be working with a particularly well-behaved HHS structure $\calU=\{U_i\}_{i \in I}$ introduced by Abbott, Behrstock and Durham in \cite{ABD}. The existence of such a structure requires the HHS to satisfy an extra condition called the \emph{bounded domain dichotomy}, see \cite{ABD}. This assumption is very minor and excludes none of the major examples, in fact, it's satisfied for all \emph{hierarchically hyperbolic groups} which are HHSes that come with an appropriate action of a group $G$ satisfying certain conditions. The name \emph{omniscient} refers to a specific hyperbolic space $\calC S \in \calU$ in their structure, more details regarding the reasons behind its exceptional status will be given in Subsection \ref{subsec:omniscient}.
\end{warning}

\subsection{Constructions that utilize all of their building blocks}\label{subsec:constructions_using_all} In this section, we discuss three fundamental constructions and statements regarding HHSes that utilize all of their building blocks $\calU=\{U_i\}_{i \in I}$. These are the \emph{Helly property} for HHSes, the \emph{boundary} of an HHS as well as the cubical approximation theorem (which we have already discussed, but we shall shed more light on the construction). We start with the Helly property for HHSes.

\vspace{2mm}

\noindent (1) \textbf{The Helly property for HHSes.} Median-convex sets in $E$-hyperbolic spaces satisfy a coarse \emph{Helly property}, that is, if $\{C_i\}_{i \in I}$ is a collection of pairwise intersecting $K$-median-convex sets with $|I|< \infty$ or $\diam(C_i)< \infty$ for some $i \in I,$ then there is a point that is $K'$-close to all of them, for some $K'=K'(K, \delta)$, for instance, see Theorem 6.1 in \cite{Breuillard2021}. Such a Helly property for the HHS building blocks can be imported to the ambient HHS and it results an identical Helly property for median convex sets. Namely, in \cite{HHP}, the authors show the following. Let $X$ be an HHS with constant $E$. If $\{C_i\}_{i \in I}$ is a collection of pair-wise intersecting $K$-median-convex sets satisfying either that $|I|<\infty$ or $\diam(C_i)<\infty$ for some $i$, then there exists a point $x \in \cap_{i \in I} N(C_i,E')$, for some $E'$ depending only on $E$ and $K.$




\vspace{2mm}
\noindent (2) \textbf{From hyperbolic trees to hierarchical cubes}. Although the original motivation for introducing HHSes was the discovery that the influential hierarchical machinery for mapping class groups \cite{masurminsky:geometry:1}, \cite{masurminsky:geometry:2} also applies to numerous CAT(0) cube complexes \cite{HHS1}, studying both mapping class groups and cube complexes from the same HHS perspective led Behrstock-Hagen-Sisto to realize that the connection between these objects is much deeper: not only can one export mapping class groups techniques to study CAT(0) cube complexes, but one can also import cubical techniques to study mapping class groups and more generally HHSes. Namely, using the fact that median hulls in hyperbolic spaces are quasi-isometric to trees, Behrstock, Hagen and Sisto proved the following \cite{HHS_quasi}. For any finite set of points in an HHS $F \subset X$, there exists a constant $K$ depending only on $|F|$ such that $\hull(F)$ is $K$-median $(K,K)$-quasi-isometric to a CAT(0) cube complex $Q$. As mentioned in Subsection~\ref{subsec:fundemental_HHSes}, the median hull of a finite set $F$ of an HHS is characterized by being the collection of all points $x \in X$ whose image $\pi_U(x)$ coarsely lies in $\hull(\pi_U(F)) \subset U$ for each $U \in \calU.$ The idea then is that, since median hulls of hyperbolic spaces are known to be quasi-isometric to trees, each  $\hull(\pi_U(F))$ is quasi-isometric to a tree $T_U.$ The CAT(0) cube complex $Q$ in the above theorem is then obtained as a certain (typically not convex) subspace of $\prod_{U \in \text{Rel}(F)} T_U$. Here, $\text{Rel}(F)$ is the collection of $U \in \calU$ where $\pi_U(F)$ has a large enough diameter, depending only on $E.$

   \vspace{2mm}

\noindent (3) \textbf{The HHS boundary}. Utilizing the well-studied Gromov boundaries of hyperbolic spaces, in \cite{Durham2017-ce}, Durham, Hagen and Sisto show that each HHS $X$ admits a boundary notion $\partial X$ that provides a compactification for $X$. They also show that $\partial X$ enjoy many desirable properties similar to that of the Gromov boundary of a hyperbolic space.

\subsection{Constructions that utilize the omniscient}\label{subsec:omniscient} Let $X$ be an HHS and let $\calU=\{U_i\}_{i \in I}$ be the associated collection of hyperbolic spaces (once more, here, we are working with the specific HHS structure given in \cite{ABD} which requires $X$ to satisfy some very minor extra assumptions). Among elements of $\calU$, one hyperbolic space, denoted $\calC S$, is of a particular importance. Up to quasi-isometry, the space $\calC S$ is exactly what results when collapsing all the non-trivial ``product-regions" of $X$. We shall refer to $\calC S$ as \emph{the omniscient}. The special status of $\calC S$ come from the fact that it simultaneously records  \textbf{the entirety} of the hyperbolic aspects of the ambient HHS, which makes it exceptionally useful. For example, the \emph{Morse} and \emph{sublinearly Morse} boundaries -- which are topological spaces that record the ``hyperbolic aspects" of a proper geodesic metric space \cite{charneysultan:contracting}, \cite{cordes:morse}, \cite{Cashen2019}, \cite{QRT19}, \cite{QRT20} -- both continuously inject in the Gromov boundary of $\calC S$ as shown in \cite{ABD} for the Morse boundaries, and in \cite{Durham-Zalloum22} for the sublinearly Morse boundaries. Additionally, in the context of mapping class groups of finite type surfaces with genus $g \geq 3$, the isometry group of $\calC S$ is isomorphic to the underlying mapping class group; serving as another evidence of the special significance held by the omniscient $\calC S$.

In the previous section, we discussed how one can import various constructions from the associated hyperbolic spaces to $X$. This section is a continuation of this philosophy, but the constructions and statements we present here shall only make use on the omniscient $\calC S \in \calU$. Such examples also serve as another justification of its special status.

\vspace{2mm}

 \noindent (1) \textbf{Morse local-to-global}. Gromov hyperbolic spaces with constant $E$ are characterized by the property that sufficiently long $(\lambda, \epsilon)$-quasi-geodesics are global $(\lambda',\epsilon')$-quasi-geodesics where $\lambda', \epsilon'$ depend only on $\lambda, \epsilon$ and $E.$ They are also characterized by the property that every quasi-geodesic is \emph{$M$-Morse}; recall that a quasi-geodesic $\alpha$ is said to be $M$-Morse if there is a map $M: \mathbb{R}^+ \times \mathbb{R}^+ \rightarrow \mathbb{R}^+$ such that every $(\lambda, \epsilon)$-quasi-geodesic $\beta$ with end points on $\alpha$ remains in the $M(\lambda, \epsilon)$-neighborhood of $\alpha.$ Since Morse geodesics characterize hyperbolic spaces, their presence in other geodesics metric spaces can be viewed as a sign of hyperbolicity, and it's reasonable to question whether one can export statements from Gromov hyperbolic spaces to geodesic metric spaces containing such Morse geodesics. This perspective turns out to be indeed fruitful, and many properties of Gromov hyperbolic spaces extend to spaces that contain infinite Morse geodesics, see \cite{Cordes2017ASO}. However, various properties of Gromov hyperbolic spaces do not only rely on Morseness of their geodesics, but also on the local-to-global property met by such geodesics, for instance, regarding the regularity of the geodesic language and rationality of the growth function \cite{Cannon1984}. This motivated Russell, Spriano and Tran \cite{Morse-local-to-global} to introduce the class of \emph{Morse local-to-global} spaces which is a class of spaces where Morse quasi-geodesics enjoy not only the Morse property, but also a similar local-to-global property to the one present in Gromov hyperbolic spaces. They have shown that numerous examples of geodesic metric spaces have this property. In particular, they have shown that if the collection of all Morse quasi-geodesics in a metric space $X$ quasi-isometrically embeds in a certain Gromov hyperbolic space $Y$ (with appropriate quantifiers, see \cite{Morse-local-to-global} for more details), then $X$ has the Morse local-to-global property. The idea of the proof of this theorem is to simply import the local-to-global property from the hyperbolic space $Y$ where Morse quasi-geodesics embed back to the original space $X.$ Since Morse quasi-geodesics in HHSes all quasi-isometrically embed in the omniscient $\calC S$ (again, with certain quantifiers), all HHSes have the Morse local-to-global property. Spaces and Cayley graphs of groups satisfying the Morse local-to-global property have been shown to enjoy various desirable properties similar to those in hyperbolic spaces and groups, for instance, regarding the presence of infinite order elements \cite{Morse-local-to-global}, regularity of Morse geodesics and rationality of the growth of \emph{stable} subgroups, see \cite{Cordes2022} and \cite{Huges2022}.

\vspace{2mm}

\noindent (2) \textbf{Establishing acylindrical hyperbolicity}. Motivated by work of Bowditch \cite{bowditch:tight}, in \cite{osin:acylindrically}, Osin introduced the class of \emph{acylindrically hyperbolic groups} and showed that such a class enjoy various useful algebraic and geometric properties reminiscent to that of Gromov hyperbolic groups. The defining terms of such a class are not important at this point, what is important is that the property of acylindrical hyperbolicity is a very desirable with numerous powerful consequences. In the class of HHGs (groups admitting appropriate actions on HHSes), acylindrical hyperbolicity is characterized purely by the omniscient, namely, an HHG is acylindrically  hyperbolic if and only if the omniscient $\calC S$ has an infinite diameter.

\vspace{2mm}

\noindent (3) \textbf{Characterizing Morse elements and stable subgroups}. An element $g \in G$ is said to be Morse if it admits quasi-geodesic orbit $\langle g \rangle . p$ that is $M$-Morse for some $M$. More generally, a subgroup $H<G$ is said to be $(M,K)$-stable \cite{DurTay15} if every pair of points $x,y \in \text{Cay}(G,A) \cap H$ are connected by an $M$-Morse geodesic that remains $K$-close to $H$ in $\text{Cay}(G,A)$, see also \cite{tran:onstrongly}. It's easy to see that a cyclic subgroup is stable if and only if it is generated by a Morse element $g \in G.$ Stable subgroups in HHGs are precisely subgroups $H$ whose orbit maps in $X$ quasi-isometrically embed in $\calC S.$

\vspace{2mm}
\noindent (4) \textbf{Seeing the entire Morse boundaries}. As we mentioned, both the Morse \cite{ABD} and sublinearly Morse boundaries \cite{Durham-Zalloum22} admit continuous injections into the Gromov boundary of the omniscient. This allows one to export certain desirable topological properties from the Gromov boundary of the omniscient back to the Morse and sublinearly Morse boundaries of HHSes. This philosophy was recently successfully implemented by Abbott and Incerti-Medici \cite{Abbott-Medici-23} as we shall describe below.

Let $G$ be a group admitting a geometric action on a CAT(0) cube complex $X$. There are various topologies that the Morse and sublinearly Morse boundaries of $G$ can be equipped with, each of which has its pros and cones. The first and most natural choice that comes to mind when trying to equip these boundaries with a topology is the subspace topology of the \emph{visual boundary} of $X$ (recall that the visual boundary of a CAT(0) space consists of all geodesic rays emanating from a fixed base point $p$ and two rays are close in this topology if they fellow travel for a long time). However, there are examples where such a topology is not invariant under quasi-isometries \cite{Cashen2016}. On the other hand, while the direct limit topology given in \cite{charneysultan:contracting} by Charney-Sultan is a quasi-isometry invariant, it is not generally metrizable \cite{murray:topology}. Finally, the topologies given in \cite{Cashen2019} by Cashen-Mackay and in \cite{QRT19} by Qing-Rafi-Tiozzo (which agree on the Morse boundary by work of He in \cite{He2022}) are metrizable quasi-isometry invariant topologies; however, understanding when two elements are close with respect to this topology is generally challenging. If the CAT(0) cube complex $X$ on which $G$ acts geometrically is also an HHS, Abbott and Incerti-Medici \cite{Abbott-Medici-23} recently showed that our first natural choice of the subspace topology of the visual boundary does the job. Namely, they show that this topology is metrizable, independent of the space $X$ on which $G$ acts geometrically, and finally, understanding when two elements $\alpha, \beta$ in such a boundary are close is quite easy. To establish their theorem, the authors rely on the fact that both Morse and sublinearly Morse boundaries of HHSes continuously inject in the Gromov boundary of the omniscient $\calC S$, namely, they export the topology of the Gromov boundary via such an injection and they show that it coincides with the subspace topology of the visual boundary of the cube complex $X.$

\vspace{2mm}

\noindent (5) \textbf{Ivanov's Theorem} The primary and motivating examples for the class of HHSes is the class of mapping class groups. The omniscient hyperbolic space in this case is the \emph{curve graph} $\calC S$ of the ambient surface. In this context, the exceptional utility of the omniscient is evident by the celebrated \emph{Ivanov's theorem} stating the following. For any finite type surface of genus at least $3$, the mapping class group of that surface is isomorphic to the isometry group of the curve graph. In other words, the omniscient here sees all there is to see about the underlying mapping class group of that surface.

    \vspace{2mm}

  \subsection{Examples of hierarchically hyperbolic spaces} The class of HHSes is very broad and it includes mapping class groups \cite{MM99,MM00, HHS1} and Teichm\"uller spaces of finite-type surfaces \cite{Raf07, Dur16}, CAT(0) cube complexes with factor systems \cite{HagenSusse2020}, extra-large type Artin groups \cite{HMS21}, the genus two handlebody group \cite{miller2020stable}, surface group extensions of lattice Veech groups \cite{DDLS1,DDLS2} and multicurve stabilizers \cite{russell2021extensions}, and the fundamental groups of 3–manifolds without Nil or Sol components \cite{HHS2}, as well as various combinations and quotients of these objects \cite{BHMScombo, BR_combo}.

    \subsection{Summary} We now summarize the main points of this section and compare it to the situation of CAT(0) cube complexes.

    \subsubsection{Cube complexes via their hyperplanes}

    As we discussed in Subsection \ref{subsec:Summary_CCC},  if $\{X_i\}_{i \in I}$ is the collection of all countable CAT(0) cube complexes, and $\{\calH_i\}_{i \in I} $ is the collection of all bi-partitions of $\mathbb{N}$ satisfying certain consistency conditions, then we have the following 1-1 correspondence:

    $$\{\calH_i\}_{i \in I} \longleftrightarrow \{X_i\}_{i \in I},$$ and the geometry of an underlying CAT(0) cube complex $X$ is fully described by its simple building blocks which are the associated collection of hyperplanes: $$\calH \longleftrightarrow X.$$

    \subsubsection{HHSes via their hyperbolic spaces}

 Let $\calB$ denote the collection of all hyperbolic spaces. It turns out that there is a similar 1-1 correspondence between subsets $\calU \subset \calB$ satisfying certain consistency conditions and HHSes \cite{HHS2}. 

    $$\{\calU_i\}_{i \in I} \longleftrightarrow \{X_i\}_{i \in I},$$ and the coarse geometry of a given HHS $X$ is completely captured by the associated collection of hyperbolic spaces $$\calU \longleftrightarrow  X.$$

The goal of the next section is to show that CAT(0) cube complexes building blocks --hyperplanes-- exist in HHSes and capture many elements of its geometry.

\section{From HHSes to curtains}\label{sec:from_HHS_to_curtains} What motivated the introduction and formalism of the class of HHSes is Behrstock-Hagen-Sisto's discovery that a broad class of CAT(0) cube complexes enjoy a rich hierarchical structure similar to that of mapping class groups. Namely, there are standard hyperbolic spaces that one can associate to a given CAT(0) cube complex, and since hyperplanes encode the entire geometry of a CAT(0) cube complex, it should not be surprising that such hyperbolic spaces are themselves built via hyperplanes. In a large class of CAT(0) cube complexes, such hyperbolic spaces do satisfy the HHS consistency conditions mentioned above turning the underlying CAT(0) cube complex $X$ into an HHS $X$. See the diagram in Figure \ref{fig:Cube_complex_to_HHS}.

\begin{figure}
    \centering
   $$ X_{\text{CAT(0) cube complex }} \rightarrow	 \calH_{\text{hyperplanes }} \rightarrow 
\calU_{\text{hyperbolic spaces }} \rightarrow X_{\text{ HHS }}.$$    \caption{CAT(0) cube complexes $X$ come with their hyperplanes $\calH$, and these hyperplanes are used to build hyperbolic spaces $\calU.$ For a broad class of CAT(0) cube complexes, the hyperbolic spaces $\calU$ satisfy the consistency conditions turning the cube complex $X$ into an HHS.}
    \label{fig:Cube_complex_to_HHS}
\end{figure}


\vspace{2mm}

A natural question to ask at this point is the converse, namely, to which extent can the arrows of the diagram in Figure \ref{fig:Cube_complex_to_HHS} be reversed? That is, can the hyperbolic collection $\calU$ associated to an HHS always be interpreted as a hyperbolic collection resulting from a set of hyperplanes coming from the underlying HHS? Surprisingly, the answer turns out to be positive, and in a very strong and equivariant sense. However, before we proceed, we shall remark that the hyperplane structure enjoyed by an HHS does \textbf{not} satisfy the same consistency conditions enjoyed by hyperplanes in a CAT(0) cube complex, as otherwise, we would be showing that every HHG is cubulated which we know is incorrect \cite{Kapovich1996}. We now recall the definition of an HHS \emph{curtain} from the introduction.

\begin{definition}(curtains) Let $X$ be an HHS and let $E$ be the HHS constant. A \emph{curtain}, is defined to be a median convex set $h$ such that $X-h \subseteq h^+ \sqcup h^-$ where $h^+, h^-$ are disjoint, median-convex sets with $\dist(h^+, h^-) \geq 10E.$ The sets $h^+,h^-$ are called \emph{half spaces} for $h.$
\end{definition}

A priori, it's possible that the above definition is vacuous in HHSes, but as we shall see, the building blocks of an HHS, i.e. the collection of hyperbolic spaces $\calU=\{U_i\}_{i \in I}$ shall provide us with an abundance of curtains. Let $\calU$ be the collection of hyperbolic spaces  associated to an HHS $X$ and let $U \in \calU$ be an $E$-hyperbolic space. Fix an interval $I$ of length $10E$ along geodesic $b \subset U$. There is a natural map from $X$ to $b$ defined by first mapping $X$ to the hyperbolic space $U$ containing $b$ via $\pi_U:X \rightarrow U$, and then applying the nearest point projection map $\pi_b:U \rightarrow b$. Under the composition of the above two maps, each point $x \in X$ must map either inside $I$, to the left or right part of $I$. Hence, the set $\pi_U^{-1}\pi_b^{-1}(I)$ splits $X$ into a left, middle and right part in the obvious way, and in order to assure that the set is median convex, one considers $\hull(\pi_U^{-1}\pi_b^{-1}(I))$. Namely, its immediate to check that for each $U \in \calU$ a geodesic $b \subset U$ and an interval $I \subset b$, the set $$h_{U,b,I}:=\hull(\pi_U^{-1}\pi_b^{-1}(I))$$ is indeed a curtain. In fact, as a reflection of the fact that $U=\cal\{U_i\}_{i \in I}$ describe the entire coarse geometry of an HHS, these curtains shall do the same.

For instance, notice that if $\alpha$ is a path making a large distance in a hyperbolic space $U$ along a geodesic $\alpha_U \subset U$, then $\alpha$ must be crossing many curtains of the form $h_{U,\alpha_U,I}$. Similarly, backtracking along a geodesic $\alpha_U \subset U$ is the same as crossing and then uncrossing many hyperplanes of the form $h_{U,{\alpha_U},I}$. Now let $\calH$ denotes the collection of \textbf{all} curtains and let $$\calH^\calU:=\{h_{U,b,I}| U \in \calU, b \subset U \text{ is a geodesic and } I \subset b \text{ is an interval of length }10E\}.$$ By definition, we have $\calH^\calU \subset \calH$ and in fact, $\calH^\calU$ is an invariant sub-collection (under $\Aut(X)$ which we haven't defined, but that's not important for now). To be clear, an arbitrary curtain is merely an element $h$ of $\calH$ which might and might not be in $\calH^\calU \subset \calH$. The primary use of the collection $\calH^\calU$ is to establish that $\calH$ is a sufficiently rich collection; this in turns will imply that the collection $\calH$ captures many non-trivial elements of the coarse geometry of $X$.

\begin{figure}
    \centering
$$ \calH_{\text{curtains}} \leftarrow  \calU_{\text{hyperbolic spaces }}\leftarrow X_{\text{HHS} }.$$ 
    \caption{An HHS $X$ comes with its collection of hyperbolic spaces $\calU=\{U_i\}_{i \in I}$, and the latter allow us to show that curtains in $X$ are abundant. Namely, by consider geodesics $b \subset U$, intervals $I \subset b$ and analyzing where a given point $x \in X$ project on $b$ with respect to $I.$}
    \label{fig:reversing_HHS_Arrows}
\end{figure}

\vspace{2mm}


\begin{lemma}[Petyt-Spriano-Zalloum]\label{lem:sample_body} Let $X$ be an HHS. Up to increasing the HHS constant $E$ by a uniform amount, the following hold.

\begin{enumerate}

    \item $d(x,y)$ coarsely coincides with the cardinality of a maximal chain of curtains separating $x,y.$

    \item A quasi-geodesic $\alpha$ is a median path (equivalently, a hierarchy path) if and only if it never crosses the same curtain twice.

    \item For any three points $x,y,z \in X$, their coarse median $\mu(x,y,z)$ coarsely agrees with the intersection of the $E$-neighborhoods of all half spaces $h^+$ containing the majority of $x,y,z.$

    \item Let $Y$ be a median convex set in $X$ and let $x \in X$. The gate $P_Y(x)$ is coarsely the unique point in $Y$ with the property that whenever $c$ is a chain of curtains separating $x,P_Y(x),$ then all but $E$-many curtains of $c$ separate $x,Y.$

    \item For any set $A$, the median hull of $A$ is coarsely the intersection of the $E$-neighborhood of all half spaces properly containing $A.$
\end{enumerate}
    
\end{lemma}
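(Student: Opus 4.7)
The overall strategy is to reduce every statement to a corresponding statement about the hyperbolic spaces $U \in \calU$, exploiting the fact that the curtains $h_{U,b,I} = \hull(\pi_U^{-1}\pi_b^{-1}(I))$ form an abundant subcollection $\calH^\calU$ of $\calH$, and that the distance formula together with Behrstock's inequality let us transfer information between $X$ and its domains. A useful preliminary observation, which I would state and use throughout, is that a path $\alpha$ in $X$ crosses such a curtain $h_{U,b,I}$ (essentially) if and only if $\pi_b \circ \pi_U(\alpha)$ crosses the interval $I$, so chains of curtains correspond to chains of intervals inside hyperbolic domains.

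For item (1) the upper bound is essentially tautological: each curtain has half spaces at distance at least $10E$, so a chain $c = \{h_1, \ldots, h_n\}$ separating $x$ from $y$ forces $d(x,y) \gtrsim 10E \cdot n$ by summing the distances between consecutive half spaces along any geodesic, with only $O(1)$ loss per transition. For the lower bound I would invoke the distance formula to select domains $U_1, \ldots, U_k$ whose contributions to $d(\pi_U(x), \pi_U(y))$ sum to roughly $d(x,y)$; in each $U_j$ a geodesic $b_j$ from $\pi_{U_j}(x)$ to $\pi_{U_j}(y)$ can be chopped into disjoint intervals $I$ of length $10E$ producing many disjoint curtains $h_{U_j,b_j,I}$ separating $x$ from $y$. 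Behrstock's inequality is what lets me linearly order all of these curtains, coming from different domains, into a single chain whose length coarsely matches the total distance.

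Item (2) follows from the classical fact that a quasi-geodesic is a hierarchy path if and only if its projection to every $U \in \calU$ is an unparameterized quasi-geodesic. Crossing $h_{U,b,I}$ twice forces $\pi_U(\alpha)$ to backtrack along $b$ by at least $10E$, and conversely a backtrack in some $\pi_U(\alpha)$ lets me pick an interval $I$ that $\alpha$ re-enters; it then suffices to verify that if no $\calH^\calU$-curtain is crossed twice then no curtain is, which reduces to item (1) again. For item (3) I would apply the Helly property of median-convex sets from \cite{HHP}: any two half spaces each containing the majority of $\{x,y,z\}$ pairwise intersect (they share at least one of $x,y,z$), so the family admits a common coarse point; I then check that the coarse median $\mu(x,y,z)$ lies in the $E$-neighborhood of every majority half space, using the fact that in each $U$ its projection lies on the majority side by the hyperbolic case, and that any two points with this property are coarsely equal by item (1) applied to chains of majority-separating curtains.

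Items (4) and (5) follow the same template: the coarse characterizations of gates and median hulls by projections to every $U$, recorded in Subsection \ref{subsec:fundemental_HHSes}, translate into curtain characterizations once one observes that large coarse convex hulls in a hyperbolic $U$ are precisely intersections of half spaces determined by fat intervals along geodesics. The main obstacle I anticipate is item (1), specifically producing a single chain of curtains of length comparable to $d(x,y)$: curtains coming from different domains are genuinely transverse in general, and Behrstock's inequality has to be iterated carefully to thread them into one chain. Controlling constants uniformly across this reduction --- so that the final estimates depend only on $E$ after a single uniform enlargement --- will be the most delicate piece of bookkeeping.
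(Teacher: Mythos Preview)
Your approach to item (1) is essentially the paper's: distance formula plus Behrstock's inequality to concatenate the chains coming from different domains, and you correctly identify this as the delicate step. The paper makes the concatenation cleaner by first invoking the partition of $\text{Rel}_\theta(x,y)$ into boundedly many families of pairwise transverse domains (from \cite{RST18}) and then only using the single family with the largest contribution; this avoids having to thread together chains from non-transverse domains.

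For items (2)--(5), however, you take a genuinely more roundabout route than the paper. Your philosophy is to reduce everything to the subcollection $\calH^\calU$ and to the hyperbolic projections $\pi_U$. The paper instead works directly with general curtains, using only that half spaces are $E$-median-convex. For instance, in the forward direction of (2) the paper argues in one line: if $\alpha(t_1),\alpha(t_3)\in h^+$ and $\alpha(t_2)\in h^-$, then median convexity of $h^+$ forces $\mu(\alpha(t_1),\alpha(t_2),\alpha(t_3))$ to lie $E$-close to $h^+$, while the median-path condition forces it $E$-close to $\alpha(t_2)\in h^-$, contradicting $\dist(h^+,h^-)\ge 10E$. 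No projection to any $U$ is needed, and the argument applies to an arbitrary curtain, not just one in $\calH^\calU$. Your proposed reduction ``if no $\calH^\calU$-curtain is crossed twice then no curtain is, which reduces to item (1)'' can be made to work, but it is not as immediate as you suggest: item (1) gives a chain separating $\alpha(t_2)$ from a \emph{point}, and you would still need to argue that $\alpha$ recrosses one of those curtains, which takes a little care.

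Similarly, in (3) the paper shows $\mu(x,y,z)\in N(h^+,E)$ for every majority half space $h^+$ directly from median convexity of $h^+$, and in (4) it uses only the median characterisation of the gate $P_Y(x)$ (Lemma 2.57 of \cite{Durham-Zalloum22}) rather than domain-by-domain projections. What your approach buys is a more explicit connection to the HHS structure, which may feel more concrete; what the paper's approach buys is shorter proofs that apply uniformly to all curtains from the outset and that visibly depend only on the coarse median structure, in line with the paper's remark that these arguments really live at the median level.
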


Combining the above lemma with the discussion in Subsection \ref{subsec:fundemental_HHSes}, we see that the fundamental geometric notions of an HHS which were described via the hyperbolic collection $\calU=\{U_i\}_{i \in I}$ can also be characterized in terms of curtains.

Further, notice that the curtain's descriptions in the above lemma are exactly the ``coarsening" of the respective hyperplane statements in CAT(0) cube complexes, see Subsection \ref{subsec:fundemental_CCC}.

\subsection{The omniscient's curtains} As we discussed in Subsection \ref{subsec:omniscient}, each HHS comes with a special hyperbolic space $\calC S \in \calU$ that we called the omniscient. The collection of curtains $\calH^\calU$ consists of curtains that come from geodesics $b \subset U$ with $U \in \calU$. Given the special status of the omniscient $\calC S \in \calU$, it is natural to wonder how curtains coming from geodesics $b$ living in the omniscient $\calC S$ differ from other curtains in $\calH^\calU.$ The following lemma addresses this question (for those who are familiar with HHSes, the following lemma is a characterization in the context of the ABD structure given in \cite{ABD}, but that's not relevant for now). First, following \cite{PSZCAT}, a pair of HHS curtains $h_1,h_2$ are said to be \emph{$L$-separated} if every chain of curtains $c$ meeting both has cardinality $|c| \leq L.$

\begin{lemma}[Petyt-Spriano-Zalloum]\label{lem:omniscients_curtains}
 For $i \in \{1,2\}$, let $U=\calC S$, $b_i \subset U$ be a geodesic, $I_i\subset b_i$ be an interval and let $h_i:=h_{U,b_i, I_i}\subset X$. There are constants $E', L$, depending only on $E,$ such that if $\dist(\pi_U(h_1),\pi_U(h_2))>E',$ then $h_1,h_2$ are $L$-separated.
\end{lemma}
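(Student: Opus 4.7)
The plan is to use the hyperbolicity of the omniscient $U = \calC S$ to force every curtain in the chain through a common bottleneck, and then to bound the chain length by combining a distance-formula/Bounded Geodesic Image analysis with the coarse Helly property.

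First, I would set up as follows. Suppose $c = \{k_i\}_{i=1}^n$ is a chain of curtains with each $k_i$ meeting both $h_1$ and $h_2$, where $\dist_U(\pi_U(h_1), \pi_U(h_2)) > E'$ for a constant $E'$ to be chosen in terms of $E$. For each $i$, I pick $p_i \in k_i \cap h_1$ and $q_i \in k_i \cap h_2$, and let $\gamma_i$ be a hierarchy path from $p_i$ to $q_i$. Median convexity of $k_i$ forces $\gamma_i$ to lie in a uniform $C_1(E)$-neighborhood of $k_i$, and $\pi_U \circ \gamma_i$ is then an unparametrized quasi-geodesic in the $E$-hyperbolic space $U$ from $N_E(I_1)$ to $N_E(I_2)$, with constants depending only on $E$.

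Next, I extract a bottleneck from the hyperbolicity of $U$. Fixing a geodesic $\beta \subseteq U$ realizing $\dist_U(I_1, I_2)$ and letting $m$ be its midpoint, the Morse lemma in $U$ guarantees that each $\pi_U(\gamma_i)$ passes within a uniform $C_2 = C_2(E)$ of $m$, provided $E'$ is sufficiently large. I may thus select $z_i \in \gamma_i$ with $\pi_U(z_i) \in B_U(m, C_2)$; the point $z_i$ lies in an $O(E)$-neighborhood of $k_i$. For any $i < j$, the sub-chain $\{k_l : i < l < j\}$ separates $z_i$ from $z_j$ up to $O(E)$ slack, so item 1 of Lemma \ref{lem:sample_body} yields $\dist_X(z_i, z_j) \geq C_3^{-1}(j - i) - C_3$ for a uniform $C_3 = C_3(E)$.

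Finally, I bound $n$. The points $z_i$ cluster in $B_U(m, C_2) \subseteq U$ yet have pairwise $X$-distances growing linearly in the index gap. The distance formula and the Bounded Geodesic Image axiom pin every proper nested domain $V \pnest S$ contributing to $\dist_X(z_i, z_j)$ to satisfy $\dist_U(\rho_V^S, m) \leq C_4(E, C_2)$. I would then apply Theorem \ref{thm:cubical_approximation} to cubulate $\hull(\{p_1, q_1, p_n, q_n\})$, yielding a CAT(0) cube complex $Q$ of dimension $v = v(E)$ in which the $k_i$ descend to pairwise disjoint convex walls all crossing the image of $\hull(\pi_U^{-1}(B_U(m, C_2)))$; a coarse Helly argument combined with the uniform dimension bound on $Q$ and the pinning of $\rho_V^S$ near $m$ then gives $n \leq L(E)$.

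The hard part will be the final step. Converting ``bounded $U$-projection plus linearly-growing $X$-distance'' into a uniform bound on the chain length is delicate, because the cubulation diameter of $\hull(\{p_1, q_1, p_n, q_n\})$ grows with $n$ even though the cubulation dimension stays uniformly bounded in $v(E)$. An alternative route, which I expect to be the actual argument, is to work directly with the omniscient's role in the ABD structure and establish $L$-separation in $U$ between the thick-interval-preimages corresponding to $h_1$ and $h_2$, then lift the resulting bound to $X$ using that each $\pi_U(k_i)$ is forced by the meeting condition to span an unparametrized quasi-geodesic from $N_E(I_1)$ to $N_E(I_2)$.
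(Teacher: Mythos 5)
Your reduction to a bottleneck in $U=\calC S$ is fine as far as it goes (each $k_i$ in the chain must contain a point $z_i$ lying, up to $O(E)$, over the midpoint $m$, and by item~1 of Lemma \ref{lem:sample_body} the $z_i$ spread out linearly in $X$), but the final step is a genuine gap, and it is the whole content of the lemma. Having $n$ points that are linearly spread out in $X$ while all projecting into $B_U(m,C_2)$ is not in itself contradictory: product regions in an HHS do exactly this, and knowing via BGI that the contributing domains $V$ have $\rho_V^S$ pinned near $m$ does not bound their number or their diameters. The cubulation step cannot rescue this: the walls induced by the $k_i$ in the approximating cube complex are pairwise \emph{disjoint}, and the dimension bound $v$ only controls pairwise \emph{crossing} hyperplanes, so it says nothing about the length of a chain; nor do you specify a pairwise-intersecting family to which coarse Helly could be applied to produce a contradiction. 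A sharper way to see the gap: up to this point your argument uses only hyperbolicity of $U$ and the distance formula, so it would apply verbatim to any domain $U\in\calU$; but the statement is false for a non-omniscient domain with an unbounded orthogonal complement (in $\mathbb{R}^2=U\times V$, horizontal curtains form arbitrarily long chains meeting two far-apart vertical curtains). So the special property of the omniscient must enter essentially, and it never does in your outline.

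What is missing is exactly the ingredient the paper uses: Lemma 7.10 (and Corollary 7.11) of \cite{Durham-Zalloum22}, which says that if two sets have far-apart projections to $\calC S$ (in the ABD structure), then the gate of the median hull of one onto the other is \emph{uniformly bounded in $X$} -- i.e., the region over your bottleneck through which every spanning median-convex set must pass is bounded, not just bounded in its $U$-projection. The paper's proof is then two lines: far $\calC S$-projections give $\diam(P_{h_2}(h_1))\le L'$ by Durham--Zalloum, and Lemma \ref{lem:separated_iff_gate} (bounded gate diameter is equivalent to $L$-separation, itself a quick consequence of item~4 of Lemma \ref{lem:sample_append}) converts this into $L$-separation. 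Your closing guess that ``the actual argument'' works directly with the omniscient's role in the ABD structure is pointing in the right direction, but as written the proposal neither proves nor invokes the bounded-gate statement, and without it the chain length cannot be bounded.
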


The key ingredient for the proof of the above lemma is Lemma 7.10 in \cite{Durham-Zalloum22} which states that for any two sets $A,B$ projecting far in $\calC S$, the gate map of the median hull to the other is uniformly bounded, see Appendix A for the detailed proof. While Lemma \ref{lem:omniscients_curtains} concerns the omniscient's curtains, the following lemma applies to any pair of curtains, the proof will also be provided in Appendix A.

\begin{lemma} [Petyt-Spriano-Zalloum]\label{lem:points_versus_their_gates} Let $h_1,h_2$ be two disjoint curtains and let $x,y \in h_1$. If $c$ is a chain of curtains separating $x,y$, then all but at most $E'$-many curtains of $c$ separate $P_{h_2}(x), P_{h_2}(y)$, where $E'$ is a constant depending only on $E.$
    
\end{lemma}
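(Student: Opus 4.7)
The plan is to classify each $k_i$ in $c$ according to the positions of the gates $p = P_{h_2}(x)$ and $q = P_{h_2}(y)$. Writing $c = \{k_1, \ldots, k_n\}$ and choosing the half-space convention so that $x \in k_i^+$ and $y \in k_i^-$ for every $i$, I would split $c$ into four sub-families: $c_{\mathrm{sep}}$ consisting of the $k_i$ with $p \in k_i^+$ and $q \in k_i^-$ (precisely the curtains that separate $p$ from $q$ in the ``correct'' orientation), $c_{\mathrm{rev}}$ consisting of those with $p \in k_i^-$ and $q \in k_i^+$, $c_x$ consisting of those with both $p, q \in k_i^+$, and $c_y$ consisting of those with both $p, q \in k_i^-$. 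It suffices to bound $|c_{\mathrm{rev}}| + |c_x| + |c_y|$ by a constant depending only on $E$.

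First I would eliminate $c_{\mathrm{rev}}$. For any $k \in c_{\mathrm{rev}}$, the gate-median characterization of $p$ gives $\mu(x, q, p) \approx p$, because $q \in h_2$ and for every $z \in h_2$ one has $\mu(x, z, p) \approx p$. By item 3 of Lemma \ref{lem:sample_body}, this median lies in the $E$-neighborhood of every half-space containing at least two of $\{x, q, p\}$; since $x$ and $q$ both lie in $k^+$, this forces $p \in N(k^+, E)$, contradicting $p \in k^-$ together with $\dist(k^+, k^-) \geq 10E$. Hence $c_{\mathrm{rev}}$ is empty up to a bounded error.

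Next, the sub-families $c_x$ and $c_y$ are themselves chains (being subsets of $c$), and by construction each element of $c_y$ separates $x$ from $p$, while each element of $c_x$ separates $y$ from $q$. Applying item 4 of Lemma \ref{lem:sample_body} to $c_y$ viewed as a chain separating $x$ from $P_{h_2}(x) = p$, all but at most $E$ of its members also separate $x$ from $h_2$, meaning $h_2 \subseteq k^-$. The symmetric application to $c_x$ shows that all but $E$ of its members satisfy $h_2 \subseteq k^+$. What remains is to bound the number of chain curtains $k \in c$ with $h_2$ contained entirely in a single half-space of $k$.

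The main obstacle is this final step. The nested chain structure $k_1^+ \subseteq \cdots \subseteq k_n^+$ forces $\{i : h_2 \subseteq k_i^-\}$ to be an initial segment of chain indices and $\{i : h_2 \subseteq k_i^+\}$ to be a terminal segment, with the curtains properly crossed by $h_2$ sitting between them. To bound the lengths of these two tails by a constant depending only on $E$, I would combine the thickness of the half-spaces of $h_2$ itself (distance $\geq 10E$) with the hypothesis that $h_1$ and $h_2$ are disjoint, so that one of them lies in a half-space of the other. A long tail would produce many chain curtains that simultaneously cross $h_1$ (since $x, y$ lie on opposite sides) while confining $h_2$ to a fixed half-space, and this should contradict the median-hull characterization (item 5) applied to a suitable set containing both $h_1$ and $h_2$. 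Quantifying this heuristic is the principal technical content of the proof.
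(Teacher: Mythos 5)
Your own flag at the end is the real issue, and it is not a technicality that more work will fix: the family you could not bound --- the curtains of $c$ that contain $h_2$ entirely in one half-space --- genuinely cannot be bounded, because the statement in the orientation you are proving is false. Concretely, let $X$ be a hyperbolic space regarded as an HHS (say $\mathbb{H}^2$, with $\calC S=X$), let $l$ be a geodesic, and set $h_1=\hull\bigl(\pi_l^{-1}([0,10E])\bigr)$ and $h_2=\hull\bigl(\pi_l^{-1}([100E,110E])\bigr)$. Choosing $x,y\in h_1$ far from $l$ (e.g.\ far out along the two perpendiculars through $l(0)$ and $l(10E)$), the distance $\dist(x,y)$ is arbitrarily large, so by item 1 of Lemma \ref{lem:sample_body} there is an arbitrarily long chain $c$ separating $x,y$; but $P_{h_2}(x)$ and $P_{h_2}(y)$ lie within a uniformly bounded distance of each other (this is exactly the bounded-gate situation of Lemma \ref{lem:separated_iff_gate}), so only boundedly many curtains of any chain can separate the two gates. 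Almost every curtain of $c$ here is of the type you were left with, so no combination of the disjointness of $h_1,h_2$ with item 5 can close the gap. For what it is worth, your partial steps are sound: eliminating $c_{\mathrm{rev}}$ via the gate--median characterization is correct, as is the item-4 reduction of $c_x\cup c_y$ to curtains with $h_2$ in a single half-space; the obstruction is the statement itself, not your bookkeeping.

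What the paper actually proves (and what it uses in the proof of Lemma \ref{lem:separated_iff_gate}) is the statement with the two pairs of points exchanged --- this is how the lemma is restated in the appendix as Lemma \ref{lem:two_points_versus_their_gates_append}: if $c$ is a chain separating $P_{h_2}(x),P_{h_2}(y)$, then all but $E'$-many curtains of $c$ separate $x,y$. In that orientation the proof is two lines and is essentially the step you already have: a curtain of $c$ that fails to separate $x,y$ has both points on one side, hence separates (say) $x$ from $P_{h_2}(x)$, while it cannot separate $x$ from $h_2$ because it already separates two points of $h_2$; item 4 of Lemma \ref{lem:sample_body} (after a pigeonhole between $x$ and $y$) then bounds the number of such curtains by a constant depending only on $E$. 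So if you swap the roles of $\{x,y\}$ and their gates, your item-4 argument becomes the paper's proof; as stated in the body of the survey, however, the lemma needs this correction.
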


The following lemma follows immediately by combining Lemma 7.10 and Corollary 7.11 in \cite{Durham-Zalloum22} (also, compare the statement with Proposition 14 in \cite{Genevois16} and Lemma 7.12 in \cite{Durham-Zalloum22}), see Appendix A for more details.

\begin{lemma}[Petyt-Spriano-Zalloum]\label{lem:separated_iff_gate} Let $X$ be an HHS. Two disjoint curtains $h_1,h_2 \subset X$ are $L$-separated if and only if $\diam (P_{h_1}(h_2)) \leq L'$, where $L,L'$ determine each other.
\end{lemma}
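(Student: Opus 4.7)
The plan is to exploit two tools established earlier, namely the chain distance formula of Lemma~\ref{lem:intro_lemma}(1) and the chain-transfer statement of Lemma~\ref{lem:points_versus_their_gates}, which together allow one to trade freely between ``distance inside a curtain'' and ``chains of curtains separating two points of that curtain''. The key geometric observation, used on both sides, is that since curtain half-spaces are path-connected, any curtain $k$ that separates two points of a curtain $h$ must itself meet $h$: a path from one half-space of $k$ to the other inside $h$ cannot avoid $k$.

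For the forward direction, assume $h_1,h_2$ are $L$-separated and pick $x_0,y_0\in P_{h_1}(h_2)$ with $\dist(x_0,y_0)\geq \diam P_{h_1}(h_2)-1$. By Lemma~\ref{lem:intro_lemma}(1), there is a chain $c$ of curtains separating $x_0$ and $y_0$ with $|c|$ coarsely equal to $\dist(x_0,y_0)$, and each element of $c$ meets $h_1$ by the observation above. Applying Lemma~\ref{lem:points_versus_their_gates}, all but at most $E'$ curtains of $c$ also separate $P_{h_2}(x_0)$ from $P_{h_2}(y_0)$; since these points lie in $h_2$, those curtains also meet $h_2$. The resulting sub-chain meets both $h_1$ and $h_2$, so by hypothesis has cardinality at most $L$, and unwinding the distance formula yields $\diam P_{h_1}(h_2) \leq L'(L,E)$.

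For the converse, suppose $\diam P_{h_1}(h_2) \leq L'$ and let $c=\{k_1,\dots,k_n\}$ be a chain meeting both $h_1$ and $h_2$. For each $i$ pick $b_i\in k_i\cap h_2$ and set $a_i:=P_{h_1}(b_i)\in P_{h_1}(h_2)$. A short median argument, comparing $a_i$ with the median of $b_i$ and an arbitrary point of the nonempty set $k_i\cap h_1$, together with the median-convexity of $k_i$, places $a_i$ within a uniform constant of $k_i$. Since $a_1$ lies coarsely on $k_1$ and $a_n$ coarsely on $k_n$, and consecutive curtains in a chain have half-spaces at distance at least $10E$, the intermediate $k_i$ honestly separate $a_1$ from $a_n$ after discarding a bounded number of curtains near the endpoints. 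Lemma~\ref{lem:intro_lemma}(1) then gives $n$ coarsely bounded by $\dist(a_1,a_n) \leq \diam P_{h_1}(h_2) \leq L'$, so $n\leq L(L',E)$.

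The main technical obstacle is the bookkeeping in the converse step: specifically verifying that $P_{h_1}(b_i)$ lies uniformly close to $k_i$, and that the bounded slack absorbed at the two endpoints of the chain does not dominate its length. Once these are controlled, the argument is essentially a direct assembly of Lemma~\ref{lem:intro_lemma}(1) with the path-connectedness of curtain half-spaces, while the forward direction is an equally direct assembly using Lemma~\ref{lem:points_versus_their_gates} in place of the gate-locality claim.
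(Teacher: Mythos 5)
Your converse direction is correct, and its key step does work: for $b_i\in k_i\cap h_2$ and $z_i\in k_i\cap h_1$, the median characterization of gates puts $\mu(b_i,a_i,z_i)$ uniformly close to $a_i=P_{h_1}(b_i)$, while median convexity of $k_i$ puts that same median within $E$ of $k_i$, so each $a_i\in P_{h_1}(h_2)$ is uniformly close to $k_i$; since consecutive curtains of a chain have half-spaces at distance at least $10E$, discarding a bounded number of curtains at the two ends makes the remaining ones honestly separate $a_1$ from $a_n$, and item~1 of Lemma~\ref{lem:intro_lemma} bounds $n$ by $\diam P_{h_1}(h_2)$ up to constants. This is only mildly different from the paper's converse, which instead uses item~4 of Lemma~\ref{lem:sample_append} to show that all but boundedly many elements of the chain meet the bounded set $P_{h_1}(h_2)$ and then invokes the spacing of a chain; your version trades that for one extra application of the chain-distance formula, which is equally fine.

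The forward direction, however, has a gap as written: it leans on Lemma~\ref{lem:points_versus_their_gates} in the form stated in the body, namely that a chain separating $x,y\in h_1$ has all but $E'$ of its elements separating $P_{h_2}(x),P_{h_2}(y)$. That transfer direction is false in this generality, and it is not what Appendix~A proves: Lemma~\ref{lem:two_points_versus_their_gates_append}, which is the statement the paper's own proof uses, goes the other way, from a chain separating the gates to a chain separating the original points. For a counterexample to the forward transfer, take the square complex obtained from a long strip $[0,n]\times[0,1]$ by attaching a single square $[0,1]\times[1,2]$ at one end, let $h_1$ be (a thickened copy of) the long horizontal hyperplane and $h_2$ the horizontal hyperplane of the attached square: the $n$ vertical curtains separate the two far ends of $h_1$, but their gates on $h_2$ lie at uniformly bounded distance, so only boundedly many of them separate those gates. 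Your choice of $x_0,y_0$ inside the gate image $P_{h_1}(h_2)$ in fact makes the transfer true, but that requires a coarse bridge-type argument you do not supply, so citing the body statement does not close the step. The efficient repair is exactly the paper's forward argument: write $x_0=P_{h_1}(b)$, $y_0=P_{h_1}(b')$ with $b,b'\in h_2$, take a maximal chain $c$ separating $x_0,y_0$ via item~1, and apply Lemma~\ref{lem:two_points_versus_their_gates_append} with the roles of the two curtains exchanged; then all but $E'$ elements of $c$ separate $b,b'\in h_2$ and hence meet $h_2$, while every element of $c$ separates two points of $h_1$ and hence meets $h_1$, so $|c|\le L+E'$ and item~1 bounds $\diam P_{h_1}(h_2)$.
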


   \section{CAT(0) spaces, their curtains and curtain model}\label{sec:CAT(0)_curtains}

   \subsection{A comparison with HHSes}\label{subsec:a_comparsion_with_HHSes}

One similarity between CAT(0) spaces and HHSes is that in the same way an HHS comes with a collection of hyperbolic spaces $\calU=\{U_i\}_{i \in I}$ and projections to these spaces $\{\pi_U: X \rightarrow U_i\}_{i \in I}$, CAT(0) spaces do also come with projections to hyperbolic spaces. Namely, an almost characteristic property of CAT(0) spaces is the existence and uniqueness of \emph{nearest point projections} to closed convex sets, and particularly to geodesics. Hence, if $\calL$ denotes the collection of all geodesic lines in $X,$ then, we have the nearest point projection maps $\pi_l: X \rightarrow l \cong \mathbb{R}$ for each $l \in \calL$ (assuming $X$ has the geodesic extension property, but that's a very technical point and it's not strictly needed).  See Figure \ref{fig:First_comparison} below.

     \begin{figure}[ht]
    \centering
\begin{tikzcd}  
 X_{\text{HHS}} \rightarrow \calU_{\text{hyperbolic spaces}}
\end{tikzcd}%
    \qquad \hspace{10mm}
\begin{tikzcd}  
 X_{\text{CAT(0)}} \rightarrow \calL_{\text{hyperbolic lines}}
\end{tikzcd}    \caption{HHSes come with a collection of hyperbolic spaces $\calU=\{U_i\}_{i \in I}$ and projections to these hyperbolic spaces $\pi_U: X \rightarrow U$, and CAT(0) spaces come with a collection of hyperbolic lines $\calL$ and projections to these hyperbolic lines $\pi_l: X \rightarrow l$.}%
    \label{fig:First_comparison}%
\end{figure}


   


   A major distinction between the two settings, however, is the set of rules dictating how the hyperbolic spaces in $\calL$ versus those in $\calU$ interact with one another. Namely, in the HHS settings, the hyperbolic spaces $\calU=\{U_i\}_{i \in I}$ and maps $\pi_{U_i}: X \rightarrow U_i$ satisfy various \textbf{finiteness} conditions that are completely absent in the CAT(0) case. For instance, each pair of points $x,y$ in an HHS have large projections only to finitely many hyperbolic spaces $U \in \calU,$ however, and as can be seen from simple CAT(0) spaces like $X=\mathbb{R}^2$, a pair of points $x,y$ can have large projections on uncountably many lines $l \in \calL.$

\subsection{Too many curtains, a drawback or an advantage?}
   
   The CAT(0) example of $\mathbb{R}^2$ is indeed an HHS, but not with respect to the collection $\calL$ described above. To see that $\mathbb{R}^2$ is an HHS, one considers the two obvious horizontal and vertical lines (the $x$-axis and $y$-axis), denoted $l_h,l_v$ with the nearest point projection maps $\pi_h:\mathbb{R}^2 \rightarrow l_h$, $\pi_h:\mathbb{R}^2 \rightarrow l_v.$ These two lines $\calU=\{l_h, l_v\}$ suffice for understanding the coarse geometry of $\mathbb{R}^2$; for instance, notice that the Euclidean distance between a pair of points $x,y \in \mathbb{R}^2$ coarsely agrees with the sum $\dist (\pi_h(x), \pi_h(y))+\dist(\pi_v(x),\pi_v(y)).$


   Importantly, however, the hyperbolic spaces $\calU=\{l_v, l_h\}$ are not invariant under $\isom(\mathbb{R}^2)$ and neither are the HHS curtains they define (all of which are either parallel to $l_v$ or $l_h$). Since we will make use of this observation later, we shall record it as a separate remark (in fact, the remark below is slightly more general that what we just discussed, but the proof follows similar lines).

   \begin{remark}\label{rmk:no_HHS_structure} The CAT(0) space $\mathbb{R}^2$, with respect to the Euclidean distance, admits no HHS structure which is invariant under $\isom(\mathbb{R}^2)$.
   \end{remark}

   Similarly, if we put aside the HHS perspective and only consider $\mathbb{R}^2$ as a CAT(0) cube complex with its usual collection of hyperplanes $\calH$, the collection $\calH$ is also not invariant under the entire isometry group $\isom(\mathbb{R}^2)$, for instance, the rotation by $\frac{\pi}{4}$ fixing the origin does not map $\calH$ to $\calH.$ Therefore, if one is interested in studying the entire isometry group $\isom(\mathbb{R}^2)$ from a hyperplane perspective, it's necessary to consider a larger class of hyperplanes. This is what Petyt, Spriano and the author do in \cite{PSZCAT}. Namely:

   \begin{definition}[CAT(0) curtains] Let $X$ be a CAT(0) space. A CAT(0) \emph{curtain} --or simply, a curtain-- is defined to be $\pi_l^{-1}(I)$ where $l$ is a geodesic, $I \subset l$ is an interval of length one, and $\pi_l:X \rightarrow l$ is the nearest point projection map.


      \end{definition}

   It is immediate from the definition that the collection of all curtains $\calH$ is invariant under the entire isometry group of a CAT(0) space $X.$ In fact, the following statement shows that curtains are rather a canonical extension of the notion of a hyperplane.

\vspace{2mm}

\noindent \underline{Observation}: Let $G$ be a group acting on a CAT(0) space $X$ by isometries and let $\calH$ denote the collection of all curtains in $X$. The space $X$ admits a $G$-invariant a cubical structure if and only if there is a discrete $G$-invariant subcollection of curtains $\calH' \subset \calH.$ 
\vspace{2mm}

To avoid confusion, we shall quickly remind the reader of how we defined curtains in the HHS setting compared to the CAT(0) setting. In the HHS setting, a curtain was merely an $E$-median-convex set $h \subset X$ such that $X-h \subset h^+ \sqcup h^-$ where $h^+,h^-$ are disjoint $E$-median-convex sets, and we denoted the collection of all such curtains $\calH.$ The main source of curtains in this setting came from the maps $\pi_U:X \rightarrow U$; namely, by fixing a geodesic line $l \subset U$, an interval $I \subset l$ of length $10E$ and considering the median hull of the fiber $(\pi_l \circ \pi_U)^{-1}(I).$ For $U \in \calU$, let's denote the collection of curtains coming from lines in $U$ by $\calH^U$. Recall that $\calH^\calU$ was defined as $\calH^\calU:= \cup_{U \in \calU}\calH^U.$ In particular, the collection of all HHS curtains $\calH$ includes an especially well-understood sub-collection $\calH^\calU$, nonetheless, a ``curtain" is merely an element of $\calH$ and not necessarily $\calH^\calU.$ Finally, we defined two curtains $h_1,h_2$ to be $L$-separated if every chain of curtains meeting both has cardinality at most $L.$

   \subsection{Towards an analogue of the HHS omniscient}

   Coming back to the diagram in Figure \ref{fig:First_comparison}, the main similarity between HHSes and CAT(0) spaces is that each comes with a collection of projections to hyperbolic spaces; the collection $\calU$ for HHSes, which doesn't generally consist of lines, and the collection of lines $\calL$ for CAT(0) spaces.

   As we discussed in Section \ref{sec:from_HHS_to_curtains}, for each $U \in \calU$, the projection $\pi_U: X \rightarrow U$ can be ``broken apart" to a collection of projections to geodesic lines $l \subset U$ via nearest point projections. That is, each map $\pi_U: X \rightarrow U$ induces a (usually infinite) collection of maps $\{\pi_l\circ \pi_U: X \rightarrow l \mid l \text{ is a geodesic line in } U\}_{l \subset U}.$ Let's denote by $\calL^U$ the collection of all geodesic lines in $U,$ and in particular, $\calL^{\calC S}$ is the collection of geodesic lines where the hyperbolic space $U$ is the omniscient $U=\calC S$.

   This provides a slightly stronger analogy between HHSes and CAT(0) spaces: each comes with a collection of projections to geodesic lines; the collection of all lines $\calL$ in the CAT(0) case, and the collection $\calL^\calU:=\cup_{U \in \calU} \calL^U$ in the HHS case. The main distinction between the two situations, however, is that unlike the CAT(0) case, the collection of lines $\calL^\calU$ in the HHS case are grouped into a ``coherent" collection of hyperbolic spaces $\{U_i
   \}_{i \in I}$ satisfying some strong finiteness conditions. A natural question to ask at this point is the following, which is described in Figure \ref{fig:Try} below.

\vspace{4mm}

\noindent\underline{Question 1:} Is it possible to ``organize", or ``assemble" the lines $\calL$ coming from the CAT(0) space $X$ into a collection of hyperbolic spaces $\calU$ so that $(X, \calU)$ is an $\isom(X)$-invariant HHS? 

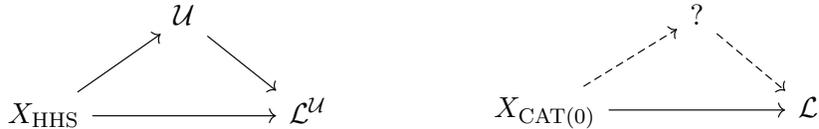
\begin{figure}[ht]
    \centering
 \begin{tikzcd}
& \calU \arrow{rd}& \\
  X_{\text{HHS}} \arrow{rr}\arrow{ur} &                         & \calL^\calU
\end{tikzcd}%
    \qquad \hspace{10mm}
 \begin{tikzcd}
& ? \arrow[rd, dashrightarrow] & \\
  X_{\text{CAT(0)}} \arrow{rr}\arrow[ur, dashrightarrow] &                         & \calL
\end{tikzcd}    \caption{Each HHS admits projection maps to geodesic lines which factor through their projections to hyperbolic spaces. Can we factor CAT(0) projections to their geodesic lines $\pi_l:X \rightarrow l$ through some hyperbolic spaces?}
    \label{fig:Try}
\end{figure}

\vspace{2mm}

 The answer to the above question is simply no, especially if the goal is to obtain a space that $\isom(X)$-invariant as discussed in the previous section. A natural follow-up question then is,  how about the omniscient? Namely:

\noindent\underline{Question 2:} Given the exceptional utility of the HHS omniscient $\calC S$, since we are unable to assemble all the CAT(0) lines in $\calL$ into a collection of hyperbolic spaces $\calU$ so that $(X,\calU)$ is an $\isom(X)$-invariant HHS, can we at least assemble \textbf{some} of the lines $\calL' \subsetneq \calL$ to obtain a single $\isom(X)$-invariant hyperbolic space that is similar to the HHS omniscient? See Figure \ref{fig:How_about_omniscient}.

 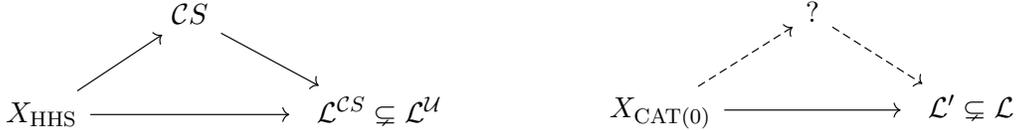
\begin{figure}[ht]
    \centering
 \begin{tikzcd}
& \calC S \arrow{rd}& \\
  X_{\text{HHS}} \arrow{rr}\arrow{ur} &                         & \,\,\,\calL^{\calC S}\subsetneq \calL^\calU
\end{tikzcd}%
    \qquad \hspace{10mm}
 \begin{tikzcd}
& ? \arrow[rd, dashrightarrow] & \\
  X_{\text{CAT(0)}} \arrow{rr}\arrow[ur, dashrightarrow] &                         & \,\,\,\calL'\subsetneq \calL
\end{tikzcd}    \caption{Can the projections to a subfamily of CAT(0) lines $\calL' \subset \calL$ factor through a hyperbolic space similar to the HHS omniscient?}
    \label{fig:How_about_omniscient}
\end{figure}

\vspace{2mm}

If we are to use a subcollection of lines from $\calL$ in a CAT(0) space $X$ to build an analogue of the omniscient, we first need to pint point that subcollection. In order to do that, it's reasonable to start with an HHS $X$, analyze the subcollection of lines $l$ that come from the omniscient $\calC S$, identify the properties they satisfy and look for lines $l \in \calL$ in the CAT(0) space $X$ with similar properties. Let  $\calL^{\calC S} \subset \calL^\calU$ denote the collection of such lines. Each such $l \subset \calC S$ comes with the projection map $\pi_l \circ \pi_{\calC S}:X \rightarrow l$, and by Lemma \ref{lem:omniscients_curtains}, after taking the median hulls, fibers of $\pi_l\circ \pi_{\calC S}$ are $L$-separated (provided they project to sufficiently far sets in $\calC S$, but that is technical) for some $L$ depending only on $X$. Hence, in order to identify lines $l \in \calL$ in a CAT(0) space $X$ that can be assembled together to obtain an analogue of the omniscient, it's wise to look for lines $l \in  \calL$ in our CAT(0) space $X$ where fibers of $\pi_l:X \rightarrow l$ are $L$-separated. This idea indeed does the trick. Namely, our approach will be to define a distance $\dist_L$ such that $\dist_L(x,y)$ is the cardinality of a maximal chain of $L$-separated curtains crossed by $[x,y]$. More precisely, define $X_L$ to be $X$ as a set with a distance 
$$\dist_L(x,y)=1+ \sup\{|c| \text{ where } c \text{ is a chain whose elements are pair-wise $L$-separated}\}.$$

Each space $(X_L,\dist_L)$ is $\delta$-hyperbolic, with $\delta$ at the order of $L^2$, and it's immediate from the definition that $\dist_L \leq \dist_{L+1} \leq 1+\dist$ for each $L$. Further, as shown in \cite{PSZCAT}, the collection $\{(X_L, \dist_L)\}_{L \geq 0}$ witnesses the entire hyperbolic aspects of the CAT(0) space $X$, for instance, every rank-one element acts loxodromically on some $X_L$. 

Finally, an infinite geodesic line $l \subset X$ has an infinite projection in $X_L$ if and only if infinitely many curtains coming from $l$ are pair-wise $L$-separated \cite{PSZCAT}, let's denote the collection of such lines by $\calL^L \subset \calL$. In particular, the space $X_L$ can be thought of as the space obtained by assembling the lines $\calL^L$. This provides a --somewhat unsatisfactory-- answer to Question 2 above, namely, the question mark and the collection $\calL'$ in Figure \ref{fig:How_about_omniscient} can be filled in as follows:

 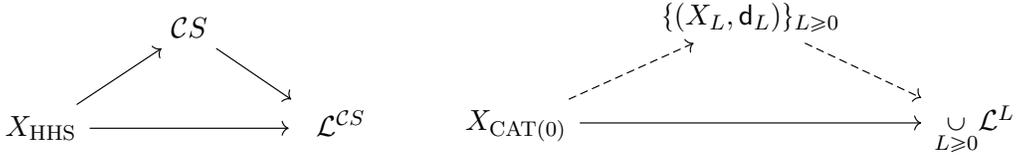
\begin{figure}[ht]
    \centering
 \begin{tikzcd}
& \calC S \arrow{rd}& \\
  X_{\text{HHS}} \arrow{rr}\arrow{ur} &                         & \,\,\,\calL^{\calC S}
\end{tikzcd}%
    \qquad \hspace{1mm}
 \begin{tikzcd}
& \{(X_L, \dist_L)\}_{L \geq 0} \arrow[rd, dashrightarrow] & \\
  X_{\text{CAT(0)}} \arrow{rr}\arrow[ur, dashrightarrow] &                         &  \underset{L \geq 0}{\cup} \calL^L
\end{tikzcd}    \caption{Filling out the previous figure}
    \label{fig:First_step}
\end{figure}


The reason the answer provided by Figure \ref{fig:First_step} is not fully satisfactory is simply the fact that in the HHS setting, there is a single hyperbolic space $\calC S$ witnessing the entire hyperbolic aspects of the underlying HHS. In the above context however, one needs to consider an infinite family of hyperbolic spaces in order to capture all the hyperbolic aspects of the CAT(0) space. The goal of the next subsection is to combine the hyperbolic spaces $\{(X_L, \dist_L)\}_{L \geq 0}$ into a singleton; the \emph{curtain model}.

  \begin{figure}[ht]
   \includegraphics[width=12cm, trim = 1cm 8cm 1cm 4cm]{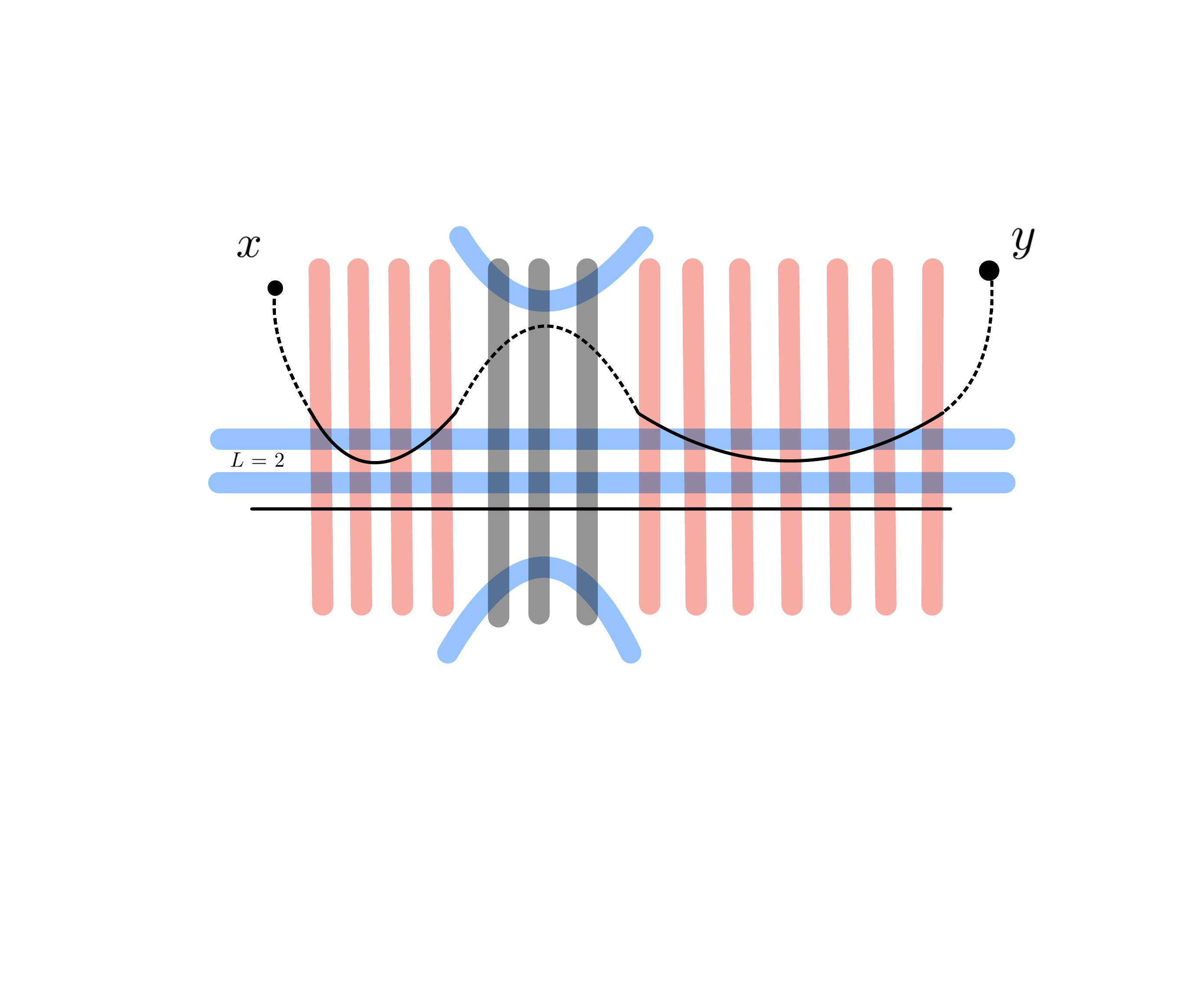}\centering
\caption{Time spent in an $L$-bottleneck} 
\label{fig:uniform_botleneck_new}
\end{figure}

\subsection{The curtain model; a counter-part of the HHS omniscient} Given the hyperbolic space $(X_L, \dist_L)$ associated to a CAT(0) space $X$, the integer $L \geq 0$ is meant to quantify the width of the bottleneck a geodesic $[x,y]$ goes through, and $\dist_L(x,y)$ measures the time spent in such a bottleneck by $[x,y]$. For instance, the distance $\dist_2(x,y)$ in Figure \ref{fig:uniform_botleneck_new} is 11 which is the time spent in 2-bottlenecks by the geodesic connecting $x,y$.

The bottleneck behaviour in a general CAT(0) space is not uniformly controlled, namely, it's easy to construct examples such that the CAT(0) space $X$ contains bottlenecks of arbitrarily large width, for instance, see Figure \ref{fig:increasing_bottleneck}. This is a reflection of the fact that the curvature of a CAT(0) space $X$ is only bounded above by zero, in particular, it's possible for $X$ to contain regions $Y_L$ whose curvatures $\kappa_L$ get arbitrarily close to zero. Surprisingly, such an arbitrary increase in the bottleneck thickness, as in Figure \ref{fig:increasing_bottleneck}, can also be found in proper cocompact CAT(0) spaces, in fact, CAT(0) cube complexes, see \cite{Shepard2022}. Namely, Shepard provided an example of a proper cocompact CAT(0) square complex $X$ with infinitely many pairs of disjoint hyperplanes $\{(h_i,k_i)\}_{i \in \mathbb{N}}$ such that $h_i,k_i$ are crossed by a chain of cardinality exactly $i$.

    \begin{figure}[ht]
   \includegraphics[width=14cm, trim = 1cm 8cm 1cm 7cm]{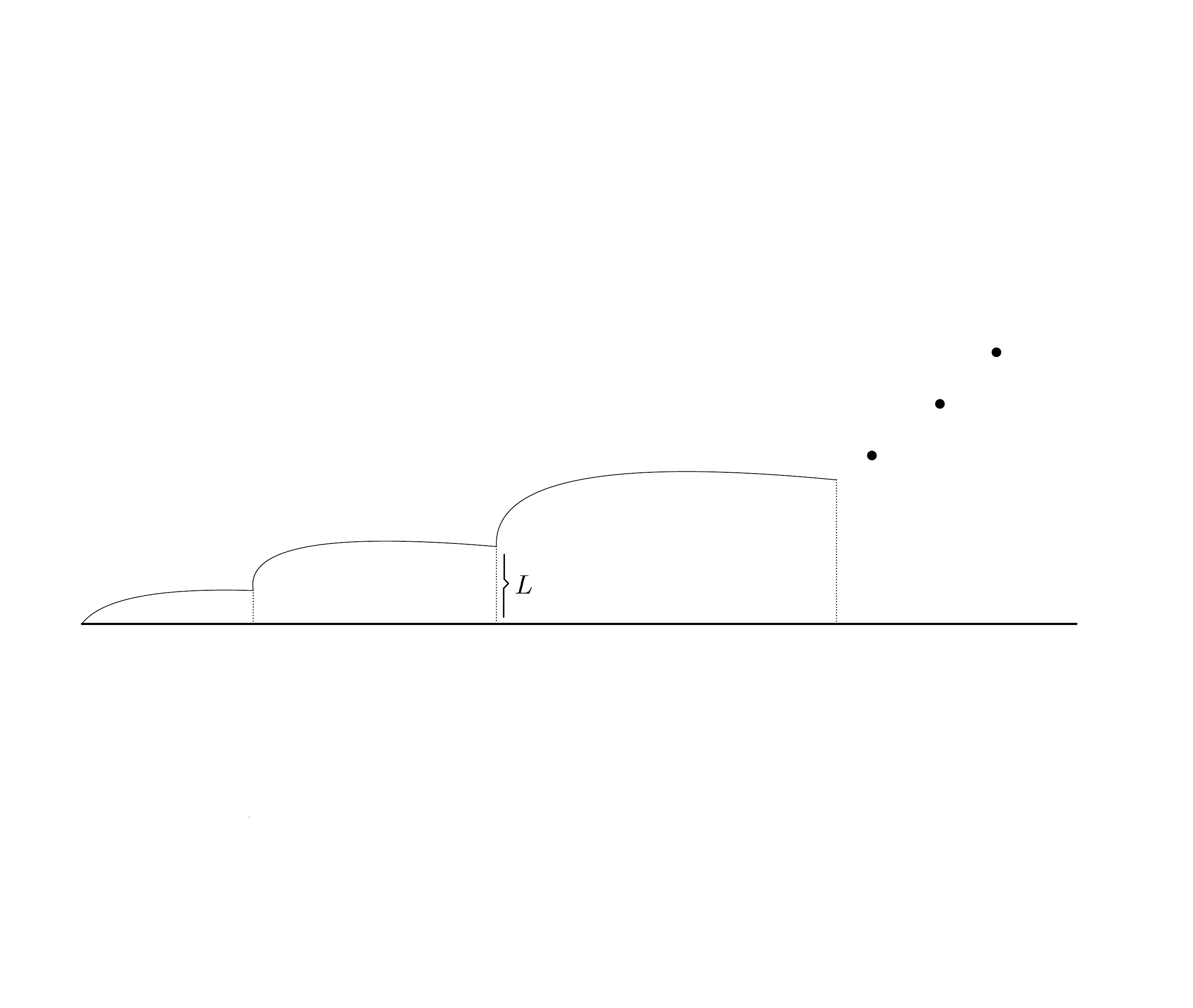}\centering
\caption{Increasing bottleneck widths $L$} \label{fig:increasing_bottleneck}
\end{figure}

In particular, it's possible for rank-one elements (which are semi-simple isometries with an axis $l$ that doesn't bound a half flat) to bound rectangular strips $l \times [0,L]$ of an arbitrarily large width $L.$ 

In light of the above, obtaining a single hyperbolic space that collectively witnesses all the bottleneck behaviours of the underlying CAT(0) space seems hopeless: if we want an analogue of the omniscient $\calC S$ witnessing all the hyperbolic aspects of $X$, on one hand, we are compelled to include arbitrarily large bottleneck widths $L$ to assure that all rank-one elements act loxodromically. On the other hand, including arbitrarily large bottlenecks $L$ disrupts hyperbolicity; namely, if $g_L$ is a rank-one element acting along a rectangular strip $[0,L] \times \mathbb{R}$, then, including the bottleneck $[0,L] \times \mathbb{R}$ in the desired hyperbolic space contradicts hyperbolicity when $L$ is too large.

A natural attempt to handle the decline in the bottleneck behaviour (i.e., increase in $L$) is to \emph{compress} each $L$-bottleneck region by a factor of $\frac{1}{L}$ or smaller. Namely, one can multiply the distance $\dist_L(x,y)$ --which recall measures the total time spent by $[x,y]$ in $L$-bottleneks-- by a quantity smaller than or equal to $\frac{1}{L}$, and the hope is that the hyperbolicity constant for these $X_L's$ will now be uniform. We will do exactly that. Namely, let's compress each $L$-bottleneck region by a factor of $\frac{1}{L^4}$ by adjusting the distance $\dist_L$ on $X_L$ to rather be $\frac{\dist_L}{L^4}$ (the choice of $\frac{1}{L^4}$ is not necessarily optimal, however, from a geometric and analytic point of view, we will see that it's necessary to multiply by $\frac{1}{L^p}$ for some $p>1$. In principle, $\frac{1}{L^{1.1}}$ could work).

Having dealt with the arbitrary increase in the hyperbolicity constant, we now need to define a new distance on $X$, denote it $\Dist$, where $\Dist(x,y)$ counts the time spent by $[x,y]$ not only in a single $L$-bottlenecks, but in \emph{any} $L$-bottleneck for any integer $L \geq 0$. In order to do that, it seems reasonable to add up the (modified) distances in $X_L,$ namely, define the distance $$\Dist(x,y)= \sum_{L \geq 1} \frac{\dist_L(x,y)}{L^4}.$$

Let's use $\X$ to denote $(X,\Dist),$ we have the following.

\begin{theorem}[{\cite{PSZCAT}}] \label{thm:Main_CAT(0)_thm} There exists some $\delta$ such that for any CAT(0) space $X$, we have:

\begin{enumerate}
    \item The space $\X$ is $\delta$-hyperbolic,
    \item Each CAT(0) geodesic in $X$ is an unparameterized $(1, \delta)$-quasi-geodesic in $\X,$

    \item $\isom(X)<\isom(\X)$, 

    \item A semi-simple isometry of $X$ is rank-one if and only if it is loxodromic on $\X,$ and 

    \item If $G<\isom(X)$ acts properly on $X$, then all rank-one elements of $G$ act as W.P.D isometries on $\X.$
    
\end{enumerate}

\end{theorem}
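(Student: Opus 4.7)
My plan is to treat the five items in order, with (1) and (2) being the technically substantive ones and (3)--(5) following by fairly standard arguments from the properties of curtains.

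For \emph{item (1)}, I would work with the four-point formulation of hyperbolicity: a metric $d$ is $\delta$-hyperbolic iff for any four points $x,y,z,w$, among the three quantities $d(x,y)+d(z,w)$, $d(x,z)+d(y,w)$, $d(x,w)+d(y,z)$, the two largest differ by at most $2\delta$. The crucial point is that this condition behaves additively under sums of metrics: if each $d_i$ is $\delta_i$-hyperbolic in the four-point sense and $\sum_i \delta_i$ converges, then $\sum_i d_i$ is $(\sum_i \delta_i)$-hyperbolic. Since each $(X_L,\dist_L)$ is $\delta_L$-hyperbolic with $\delta_L$ polynomially bounded in $L$ of degree strictly less than $4$ (the same type of estimate Genevois establishes in \cite{genevois:hyperbolicities} for CAT(0) cube complex analogues, adapted to CAT(0) curtains), the weights $1/L^4$ make $\sum_L \delta_L/L^4$ finite, giving a uniform $\delta$. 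The main obstacle of the whole theorem is confirming the polynomial bound on $\delta_L$, and this is what forces the exponent $4$ in the definition of $\Dist$.

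For \emph{item (2)}, I would use that nearest-point projection $\pi_\alpha\colon X\to\alpha$ onto a CAT(0) geodesic is $1$-Lipschitz, so a CAT(0) geodesic $[x,y]$ traverses each curtain monotonically: it never crosses the same curtain twice. Consequently, for any $z\in[x,y]$ and any chain of pairwise $L$-separated curtains separating $x,y$, one can split the chain into those crossed before $z$, those containing $z$ (uniformly few), and those crossed after $z$. This yields $\dist_L(x,y)=\dist_L(x,z)+\dist_L(z,y)\pm O(1)$, and summing with the weights $1/L^4$ shows $\Dist$ is additive along $[x,y]$ up to the uniform constant $\delta$ from (1). That is exactly the unparameterized $(1,\delta)$-quasi-geodesic condition.

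For \emph{item (3)}, every $g\in\isom(X)$ maps geodesic lines to geodesic lines and conjugates nearest-point projections, so it permutes curtains and preserves the relation of $L$-separation for every $L$. Hence $g$ preserves each $\dist_L$ and so $\Dist$. For \emph{item (4)}, if $g$ is rank-one then its axis $l$ is strongly contracting, and an argument analogous to Lemma \ref{lem:separated_iff_gate} in the HHS setting (carried out for CAT(0) curtains in \cite{PSZCAT}) produces infinitely many pairwise $L$-separated curtains crossed by $l$ for some $L$; this gives positive translation length in $(X_L,\dist_L)$, hence in $\X$. Conversely, if $g$ is loxodromic on $\X$, at least one $\dist_L$-translation length is positive, forcing infinitely many pairwise $L$-separated curtains along an orbit, from which one deduces that the axis of $g$ is strongly contracting and thus rank-one. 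Finally, for \emph{item (5)}, rank-one elements with strongly contracting axes acting properly on $X$ satisfy W.P.D: any two elements approximately fixing two far-apart points on the axis must, by the bottleneck property from strong contraction, coarsely coincide on a large ball, and properness of $G\curvearrowright X$ allows only finitely many such elements.
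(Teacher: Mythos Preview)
The paper does not actually prove this theorem: it is stated as a result of \cite{PSZCAT} and only the heuristic motivation for the definition of $\Dist$ is discussed (in particular, the remark that each $(X_L,\dist_L)$ is hyperbolic with constant of order $L^2$, so that the weights $1/L^4$ make the relevant series converge). There is therefore no proof in the paper to compare your proposal against.

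That said, your sketch for item (1) contains a genuine gap. The four-point inequality is \emph{not} additive in the way you claim. Writing the condition as $d(x,y)+d(z,w)\le \max\bigl(d(x,z)+d(y,w),\,d(x,w)+d(y,z)\bigr)+2\delta$, the problem is that for two metrics $d_1,d_2$ the maxima on the right may be realised by \emph{different} pairings, and in general $\max(A_1,B_1)+\max(A_2,B_2)$ can exceed $\max(A_1+A_2,\,B_1+B_2)$ (take $A_1=B_2=1$, $A_2=B_1=0$). So one cannot simply sum the four-point inequalities for the $\dist_L$ to conclude hyperbolicity of $\Dist$. The actual argument in \cite{PSZCAT} requires more: one shows that CAT(0) geodesics are unparameterised quasi-geodesics in \emph{each} $X_L$ with uniformly summable error, and then uses a thin-triangles or bottleneck criterion run along these common quasi-geodesics rather than a naive sum of four-point conditions. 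Your sketches for items (2)--(5) are broadly in the right spirit and close to what is done in \cite{PSZCAT}, but item (1) as written would not go through.
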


The space $\X$ is called the curtain model for $X,$ and this hyperbolic space is indeed an analogue of the HHS omniscient.

The claim that the curtain model is ``similar" to the HHS omniscient needs to be justified. Namely, what does it mean for a hyperbolic space $X_{\Dist}$ to be ``similar" to the HHS omniscient, does any $\isom(X)$-invariant hyperbolic space qualify? Well, of course not. 

The quality of being ``similar to the HHS omniscient" should be judged by comparing properties held by $X_{\Dist}$ against those held by the omniscient. Namely, as we discussed in Subsection \ref{subsec:omniscient}, we know that what makes the omniscient the omniscient is the list of strong properties it satisfies making it witness all hyperbolic aspects of the underlying HHS.

Witnessing all the hyperbolic aspects of a space is rather a philosophical statement, its two biggest indications however are containment of the entire \emph{Morse boundary} --which recall is a space that records the hyperbolic directions of a proper geodesic metric space $X$-- and having infinite orbits for all the Morse elements of $\isom(X)$. 

All of the statements and constructions described in items 1-5 of Subsection \ref{subsec:omniscient} utilizing the HHS omniscient are also enjoyed by the curtain model $\X$ of a CAT(0) space $X$. Namely, the curtain model along with its Gromov boundary contain the entire Morse boundary of $X$, characterize Morse elements and stable subgroups, records the isometry group of $X$ via an Ivanov-style rigidity theorem and finally, it can be used to establish both the Morse local-to-global property for $X$ and acylindrical hyperbolicity for a group $G$ acting geometrically on $X,$ see \cite{PSZCAT}. It is worth noting however that the action on the curtain model is expected to not be acylindrical in general (although it can be used to establish acylindrical hyperbolicity via WPD elements). The example provided in \cite{Shepard2022} likely not act acylindrically on it's curtain model.

\subsection{What does it mean to make distance in the curtain model?} There are two closely related aspects for understanding the geometric meaning of the distance recorded in the curtain model, the first of which is the following.

\vspace{2mm}

     \noindent 1) \underline{Time spent near thinner bottlenecks is more worthy}. Based on the formula for the distance in the curtain model $$\Dist(x,y)= \dist_1(x,y)+ \frac{\dist_2(x,y)}{2^4}+\frac{\dist_3(x,y)}{3^4}+ \cdots,$$ one can see that spending time $t$ near thinner bottlenecks contributes much more to $\Dist(x,y)$ than spending the same amount of time $t$ near wider bottlenecks. Namely, if a geodesic $[x,y]$ crosses $t$ curtains that are 1-separated (i.e., it spends at least $t$-time in $1$-bottlenecks), then $\Dist(x,y)$ is at least $t$ in $\X.$ However, if $[x,y]$ spends $t$-time in say $5$-bottlenecks, then the amount these $5$-bottlenecks contribute to the total $\Dist(x,y)$ is only $\frac{t}{5^4}$. In short, for a fixed $L \geq 1$, in order for the $L$-bottlenecks to contribute distance 1 to $\Dist(x,y)$, the geodesic $[x,y]$ must spend at least $1.L^4$-time in these $L$-bottlenecks.

    \vspace{2mm}

\noindent 2) \underline{Hyperbolicity decay versus time spent in bottlenecks}: The decay in the hyperbolicity (i.e., the increase in the bottleneck width $L$) must be made up for by spending a very long time --at least $L^4$-- in such $L$-bottlenecks, or otherwise, that total time is virtually a pause in $\X$. Namely, the bottleneck behaviour and time spent in these bottlenecks must balance each other out; in order to cancel out the increase of the bottleneck width, the time spent in that new bigger bottleneck must be very substantial. Yet another way of phrasing this, the increase in bottleneck width must be a \emph{sublinear} function in the time spent inside bottlenecks of that fixed width.

For instance, the geodesic ray in the bottom of Figure \ref{fig:not_making_distance_in_omniscient_CAT} projects to a finite set in $\X.$ This is because, although the geodesic ray goes through infinitely many bottlenecks, the time it spends in each bottleneck $L$ is about $L$ which is not substantial enough to make up for the decay in bottlenecks (also, intuitively, we really do not want such a geodesic ray to have an infinite projection in the curtain model, as there is nothing hyperbolic about it). On the other hand, the geodesic ray at the bottom of Figure \ref{fig:making_distance_in_omniscient_CAT} projects to an infinite quasi-ray in the curtain model.

   \begin{figure}[ht]
   \includegraphics[width=12cm, trim = 3cm 8cm 1cm 6cm]{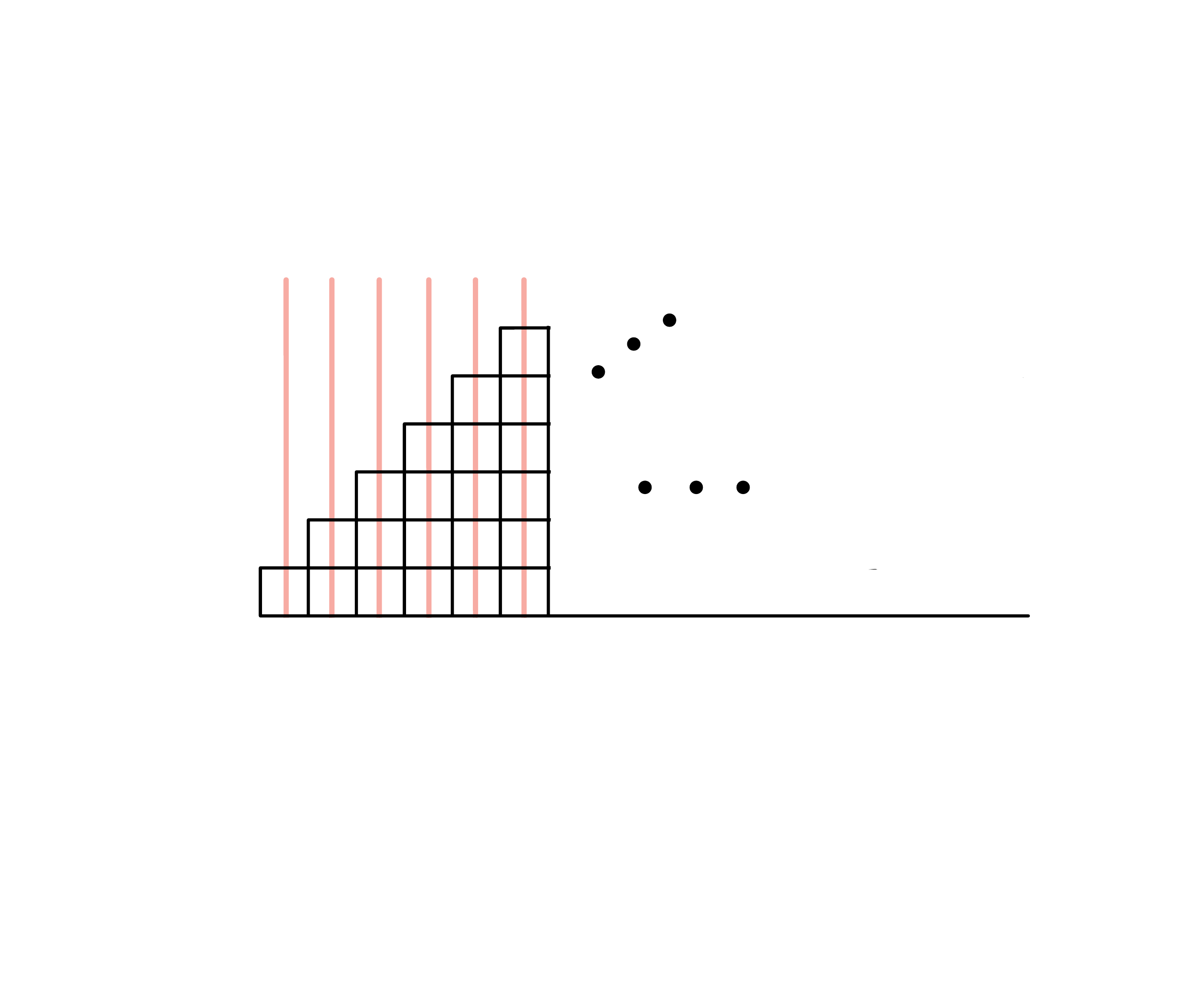}\centering
\caption{The decay of hyperbolicity is witnessed by the increase in the bottleneck constant $L$ which takes every value in $\mathbb{N}$ here. The bottom geodesic ray crosses all the red curtains, but it doesn't spend a substantial time $(\geq L^4)$ in any $L$-bottleneck region, hence, it dies out in the curtain model.} \label{fig:not_making_distance_in_omniscient_CAT}
\end{figure}

    \begin{figure}[ht]
   \includegraphics[width=14cm, trim = 3cm 7cm 1cm 5cm]{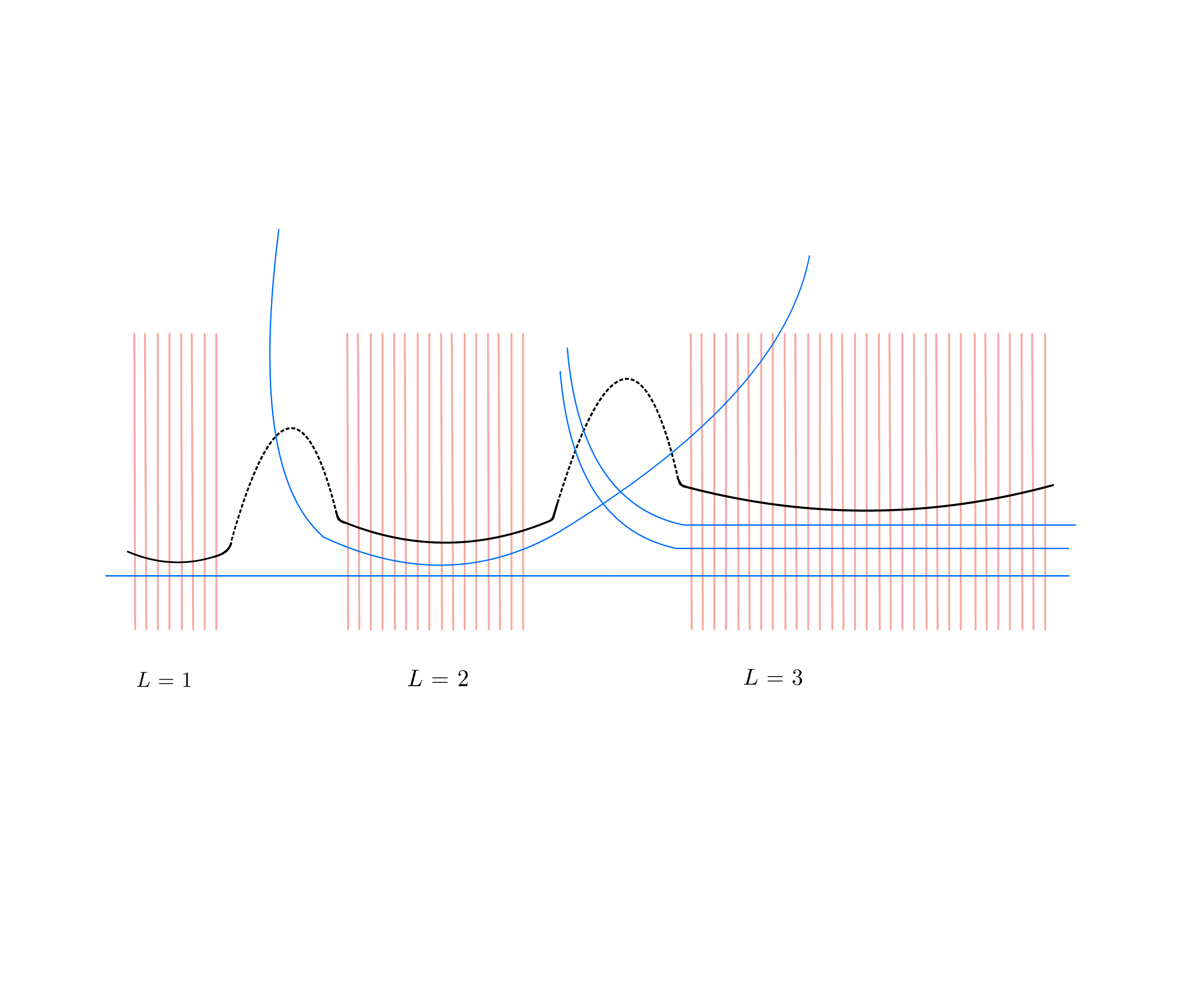}\centering
\caption{The decay in hyperbolicity is witnessed by an increase in the bottleneck constant $L$. However, such a decay is made up for by spending an incredibly long time $(\text{at least } L^4)$ in such bottlenecks leading the black geodesic ray to survive in the curtain model defining a point in its Gromov boundary.} \label{fig:making_distance_in_omniscient_CAT}
\end{figure}

\subsection{A comparison with sublinear Morseness}
The sublinear increase in the bottleneck behaviour of example in Figure \ref{fig:making_distance_in_omniscient_CAT} suggests a relationship between $\X$ and the \emph{sublinearly Morse boundary} $\partial_\kappa X$ of Qing-Rafi-Tiozzo \cite{QRT19}, \cite{QRT20}, and in fact, such a relationship is present. Recall that a quasi-geodesic ray $\gamma$ is said to be $\kappa$-sublinearly Morse if there is a sublinear function $\kappa$, a map $m:\mathbb{R}^+\times \mathbb{R}^+ \rightarrow \mathbb{R}^+$ such that every  $(q,Q)$-quasi-geodesic $\beta:[0,A] \rightarrow X$ with end points on $\gamma$ satisfies $\dist(\beta(t), \gamma) \leq m(q,Q) \kappa(d(\gamma(0), \beta(t)))$ for any $t \in [0,A].$ The $\kappa$-boundary $\partial_\kappa X$ is the collection of all $\kappa$-Morse quasi-geodesic rays with an appropriate equivalence relation and topology.

Forthcoming work of Vest \cite{Vest:curtain} performs thorough analysis regarding the interaction between the $\kappa$-boundary of a CAT(0) space $\partial_\kappa X$ and the Gromov boundary of its curtain model $\partial \X.$ In particular, Vest shows that for most sublinear functions $\kappa$, sublinearly Morse geodesic rays project to infinite diameter quasi-geodesic rays in $\X$.

The discussion in the previous subsection enhanced by Figures \ref{fig:not_making_distance_in_omniscient_CAT} and \ref{fig:making_distance_in_omniscient_CAT} seems to suggest that the only geodesic rays of $X$ that project to infinite sets in the curtain model are the sublinearly Morse ones, but this is incorrect. For instance, in the universal cover $\tilde{X}$ of a torus wedge a circle $X=T \vee S^{1}$, a geodesic ray $b$ is sublinearly Morse with sublinear function $\kappa$ if and only if there is a constant $C$ such that the total amount of time spent by $[b(0), b(t)]$ in any flat $\mathbb{R}^2$ is bounded above by $C\kappa(t)$. However, every geodesic ray that visits infinitely many distinct $\mathbb{R}^2$ --independently of the time it spends in a given $\mathbb{R}^2$-- defines an infinite quasi-geodesic ray in $\X$. This is because, although the time spent in flats is uncontrolled, the bottleneck thickness here is uniformly controlled (it's zero).

In short, while it's true that a sublinear bound on the growth of the bottlenecks crossed by a ray $b$ is necessary if $b$ is to make infinite distance in the curtain model $\X$, the time spent ``in between" these bottlenecks need not be controlled; unlike $\kappa$-sublinearly Morse geodesic rays where such a time must also be controlled.


\section{Outroduction}\label{sec:out} Throughout this survey, we have discussed four competing notions of non-positive curvature, each of which comes with a wealth of combinatorial objects and tool boxes that describe their geometry to a great extent:

\vspace{2mm}

\noindent 1) \underline{CAT(0) cube complexes} whose geometry is fully recorded via their hyperplanes $\calH=\{h\}$ and the way they interact with one another.

    \vspace{2mm}

\noindent 2) \underline{Injective metric spaces} that come with their universal property making their geometry exceptionally accommodating. Given a subspace $Y \subset X,$ if one wishes to associate a point, a quantity or a property to $Y$, the first step would typically be to find some $Y$-overspace $Z$ (meaning $Y \subset Z$) where the desired point, quantity or property is present, and the injective universal property kicks in allowing us to push it down to $X.$

\vspace{2mm}
    \noindent 3) \underline{Hierarchically hyperbolic spaces and groups} whose coarse geometry is entirely captured via a collection of hyperbolic spaces $\calU=\{U_i\}_{i \in I}$ satisfying certain consistency conditions. If we are to assign weight to the importance of these hyperbolic spaces $\calU=\{U_i\}_{i \in I}$, then there is a clear winner, which is the omniscient $\calC S.$ It's exceptional utility is justified by the amount of information it alone captures about the underlying HHS as we thoroughly described in Subsection \ref{subsec:omniscient}. There are also three more modern machines providing a wealth of extra tools for understanding hierarchically hyperbolic spaces and groups, they are summarized below:
    
\vspace{2mm}

    \begin{itemize}
        \item Each HHG admits a proper cobounded action on an injective metric space, in particular, all the tools available for studying injective metric spaces are applicable to HHGs, thanks to \cite{HHP}.

        \item Median hulls of finitely many points -- possibly including ``boundary" points -- in an HHS look like a CAT(0) cube complexes allowing a great deal of cubical tools to be carried over to the HHS setting. This shows that HHSes generalize trees in two orthogonal ways. First, they generalize hyperbolic spaces and the latter generalize trees, second, the cubical approximation theorem states that HHSes generalize CAT(0) cube complexes and the latter also generalize trees. A useful way of thinking about the cubical approximation theorem is perhaps the following. Although the global geometry of an HHS can be very different from that of cube complex, if we fix a finite number of points $F$ in an HHS and consider the various ways of efficiently moving from one of these points to another, i.e., $\hull(F)$, then the latter looks very much like a CAT(0) cube complex $Q_F$. Further, this ``looking-like" a CAT(0) cube complex for $\hull(F)$ is equivariant, namely, each $g \in \Aut(X)$ induces two median preserving isometries $g:Q_F \rightarrow Q_{gF}$ and $g:\hull(F) \rightarrow \hull(gF)$ so that the approximating maps $f_F:Q_F \rightarrow \hull(F)$, $f_{gF}:Q_{gF} \rightarrow \hull(gF)$ satisfy $g \circ f_F=f_{gF} \circ g$.

        \item There is also the hyperplane or curtain perspective on HHSes allowing for even more cubical tools to transfer to HHSes. These curtains are simply median convex sets $h$ cutting the HHS $X$ into two median convex ``half spaces" $h^+,h^-$. Their utility can be seen from the fact that they provide purely combinatorial descriptions of many important geometric notions in the underlying HHS, this includes a characterization of the HHS distance as a cardinality of a maximal chain of curtains separating them, a description of the coarse median and median-convex set as intersections of appropriate collection of half spaces, see Lemma \ref{lem:sample_body}.

    \end{itemize}
\vspace{2mm}


    \noindent 4) \underline{CAT(0) spaces} which come with two combinatorial objects describing many interesting aspects of their geometry; curtains which are analogues of cubical hyperplanes, and the curtain model which is a counter-part of the HHS omniscient. Given a CAT(0) space $X$, these two combinatorial objects record numerous interesting aspects of its geometry as shown in \cite{PSZCAT}.

    \section{Questions}\label{sec:questions}

If one is to follow the route of Caprace and Sageev \cite{capracesageev:rank} in their proof of rank-rigidity, the questions in items 1-3 below are important:

\begin{enumerate}

\item (Skewering) Let $X$ be a cocompact CAT(0) space with the geodesic extension property and let $h^+,k^+$ be half spaces with $k^+ \subsetneq h^+$. Does there exist an element $g \in \isom(X)$ with $g^nh^+ \subsetneq k^+$ for some $n \in \mathbb{N}$?

\item (Flipping) Let $X$ be a CAT(0) space with the geodesic extension property and let $h^+,k^-$ be disjoint half spaces. Suppose that $G < \isom(X)$ acts cocompactly on $X$ with no fixed point at infinity, does there exist $g \in G$ (or even in $\isom(X)$) with $g^nk^-\subsetneq h^+$ for some $n \in \mathbb{N}$?

\item (Diameter of $X_L$) Let $X$ be a CAT(0) space with the geodesic extension property and let $h,k$ be a pair of $L$-separated curtains for some $L \geq 0.$ If $G$ acts on $X$ co-compactly, does $G$ contain a rank-one element? Proposition 6.1 of \cite{PSZCAT} shows that the presence of a chain $\{h_1,h_2,h_3,h_4 \}$ whose elements are pair-wise $L$-separated implies the existence of a rank-one element for any group $G$ with a cobounded action on $X.$ The previous question asks if the number 4 can be reduced to 2.

\item Let $G$ be a group that acts properly cocompactly on $X$. Suppose that $X$ is any of the following:

\begin{itemize}
    \item a CAT(0) space, 

    \item a CAT(0) cube complex, or
    \item an injective metric space.
    \end{itemize}
    
\noindent If every $g \in G$ is an infinite order Morse element, is $G$ hyperbolic? A natural approach in the context of CAT(0) spaces is to consider the action of $G$ on the curtain model $\X$. Namely, we know that $G$ acts co-boundedly on $\X$ and since every element $g \in G$ is Morse, $g$ must be loxodramic on $\X;$ does this suffice to deduce hyperbolicity of $G$?

\item Does every CAT(0) space $(X, \dist)$ admit a distance $\rho$ so that $\isom (X, \dist)$ acts on $(X, \rho)$ by isometries and the identity map $i:(X,\dist) \rightarrow (X, \rho)$ is a quasi-isometry?

\item Let $X$ be a CAT(0) or an injective metric space with a proper cocompact action of $G$ on $X.$ Suppose further that $G$ contains a Morse element. Does $G$ contain a Morse element whose length is bounded above independently of the chosen finite generating set for $G$?  Such a statement will imply that rank-one CAT(0) groups have \emph{uniform exponential growth}; for the definition and a sample of recent results on uniform exponential growth, see \cite{Fuj08}, \cite{Fujiwara2009SubgroupsGB}, \cite{MangahasRecipie}, \cite{Abbot-Ng-Spriano}, and \cite{Kerr}.

\item For a CAT(0) space $X$, under which natural conditions do the spaces $X_L$ \emph{terminate}? Here, by terminate we mean that there exists an $L_0$ such that $X_L$ is quasi-isometric to $X_{L+1}$ for all $L \geq L_0.$

\item For a CAT(0) space $X$, the construction of the hyperbolic spaces $X_L$ is heavily inspired by an analogous construction of Genevois in the context of CAT(0) cube complexes, and the definition of $X_L$ in that setting is essentially the same, but with hyperplanes replacing curtains \cite{genevois:hyperbolicities}. A natural question to ask is: For a CAT(0) cube complex $X,$ how does the space $X_L$ constructed via hyperplanes relate to the space $X_L'$ constructed via curtains?

\item Let $G$ be a group acting properly cocompactly by cubical isometries on an irreducible CAT(0) cube complex $X$. Assume that there exists an $L$ where each pair of disjoint cubical hyperplanes $h,k$ are either $L$-separated or crossed by infinitely many hyperplanes. Does $X$ admit a factor system? Work of Genevois \cite{genevois:hyperbolicities} shows that the presence of a factor system for a CAT(0) cube complex $X$ implies that there is such a constant $L_0;$ the question above investigates the converse of Genevois statement.

    \item Let $G$ be a mapping class group of a finite type surface. Does $G$ admit a finite generating set $A$ so that $\cay(G,A)$ is coarsely injective?

\item Let $G$ be a Morse local-to-global group containing a Morse element $g.$ If $G$ is not virtually cyclic, does it follow that $G$ is acylindrically hyperbolic?

\item Possibly using curtains in HHSes, can one apply Sageev's theorem to show that every HHS is quasi-isometric to a CAT(0) cube complex?

 \item Let $G$ be a finitely generated group and $X$ be a fixed Cayley graph corresponding to some finite generating set for $G$. If $g \in G$ is Morse and $p \in E(X)$, by Theorem A and B in \cite{Sisto-Zalloum-22}, the quasi-line $\langle g \rangle . p$ is generally not Morse (it is Morse if and only if $g$ has a strongly contracting quasi-axis in $X$). Does the quasi-line $\langle g \rangle . p$ ``retain" any hyperbolicity in $E(X)$?

\end{enumerate}

\appendix

\section*{Appendix: a few proofs}\label{appendix}

In this section, we provide the proof of Lemma \ref{lem:intro_lemma}. For the convenience of the reader, we restate the lemma.

\begin{lemma}[Petyt-Spriano-Zalloum]\label{lem:sample_append} Let $X$ be an HHS. Up to increasing the HHS constant $E$ by a uniform amount, the following hold:

\begin{enumerate}

    \item $d(x,y)$ coarsely coincides with the cardinality of a maximal chain of curtains separating $x,y.$

    \item A quasi-geodesic $\alpha$ is a median path if and only if it never crosses the same curtain twice.

    \item For any three points $x,y,z \in X$, their coarse median $\mu(x,y,z)$ coarsely agrees with the intersection of the $E$-neighborhoods of all half spaces that contain the majority of $x,y,z.$

    \item Let $Y$ be a median convex set in $X$ and let $x \in X$. The gate $P_Y(x)$ is coarsely the unique point in $Y$ with the property that whenever $c$ is a chain of curtains separating $x,P_Y(x),$ then all but $E$-many curtains of $c$ separate $x,Y.$

    \item For any set $A$, the median hull of $A$ is coarsely the intersection of the $E$-neighborhoods of all half spaces properly containing $A.$
\end{enumerate}
    
\end{lemma}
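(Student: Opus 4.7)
The plan is to reduce each assertion to a statement about the collection of hyperbolic spaces $\calU=\{U_i\}_{i\in I}$ via the projections $\pi_U$, exploiting that the subcollection $\calH^\calU$ described before the lemma is already rich enough to record the entire coarse geometry of $X$. The ingredients I will rely on are the distance formula, the Behrstock inequality, the coarse Helly property for median-convex sets from \cite{HHP}, and the characterization of medians, hulls, and gates via projections, all summarized in Subsection~\ref{subsec:fundemental_HHSes}.

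For item (1), the upper bound $n\leq C\,d(x,y)+C$ on chains is essentially automatic: if $c=\{h_1,\dots,h_n\}$ is a chain separating $x,y$, the nested structure forces any path from $x$ to $y$ to traverse each gap of width $\geq 10E$, giving $d(x,y)\gtrsim 10E\cdot n$. For the lower bound, apply the distance formula to decompose $d(x,y)$ as a sum of projection distances over relevant domains $U$; within each such $U$, subdivide a geodesic from $\pi_U(x)$ to $\pi_U(y)$ into consecutive thick intervals and take the associated curtains in $\calH^U$. Behrstock's inequality then lets us interleave the chains from different domains into a single global chain: for transverse $U\neq V$, curtains from $\calH^U$ project to bounded sets in $V$, so they are nested inside a single half space of a curtain from $\calH^V$. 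Item (2) follows from a short median argument. If a median path $\alpha$ crossed the same curtain $h$ twice, we would obtain $p_1,p_2,p_3\in\alpha$ in order with $p_1,p_3\in h^+$ and $p_2\in h^-$; median-convexity of $h^+$ places $\mu(p_1,p_2,p_3)$ in the $E$-neighborhood of $h^+$, while the median-path property forces this median within $E$ of $p_2\in h^-$, contradicting $d(h^+,h^-)\geq 10E$ once $E$ is absorbed. The converse uses the fact that a quasi-geodesic fails to be a median path exactly when some projection $\pi_U(\alpha)$ backtracks substantially, and such backtracking produces a curtain in $\calH^U$ crossed twice by $\alpha$.

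For item (3), the containment of $\mu(x,y,z)$ in the $E$-neighborhood of every majority half space is immediate from median-convexity; conversely, any point $p$ in all such $E$-neighborhoods must project, in each $U\in\calU$, to within a bounded distance of the median of $\pi_U(x),\pi_U(y),\pi_U(z)$ (otherwise a suitable $\pi_b$-fiber furnishes a half space containing the majority of $\{x,y,z\}$ but not $p$), and so $p$ coarsely agrees with $\mu(x,y,z)$ by the projection characterization of the coarse median. Item (4) uses the characterization $\mu(x,y,P_Y(x))\approx P_Y(x)$ for every $y\in Y$: if $h$ lies in a chain separating $x$ from $P_Y(x)$ but fails to separate $x$ from $Y$, pick $y\in Y\cap h^-$ (where $x\in h^-$); median-convexity of $h^-$ places $P_Y(x)$ within $E$ of $h$, but the nested gap structure of the chain allows only boundedly many curtains to be within $E$ of any single point. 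Item (5) is analogous: $\hull(A)$ lies in the $E$-neighborhood of every half space $h^+$ containing $A$ since half spaces are median-convex, while the converse is obtained by observing that if $p\notin N(\hull(A),E)$ then some $\pi_U(p)$ is far from $\hull(\pi_U(A))\subset U$, and in the hyperbolic space $U$ one can produce an interval yielding a curtain in $\calH^U$ with $A$ inside one half space and $p$ outside its $E$-neighborhood.

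The step I expect to be the main obstacle is the assembly in item (1): the distance formula produces a chain within each relevant domain, but combining these into a single global chain requires a careful application of Behrstock's inequality, together with the partial order on $\calU$, to guarantee that the resulting sequence of curtains is genuinely linearly nested rather than merely pairwise disjoint. Every other item collapses into a few lines once one has the freedom to switch between general curtains and the concrete family $\calH^\calU$ via coarse Helly.
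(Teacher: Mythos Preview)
Your proposal is correct and broadly parallels the paper's proof. The lower bound in item~(1) via the distance formula plus a Behrstock total order on a transverse subfamily, the forward direction of~(2), and item~(4) via the gate characterization $\mu(x,y,P_Y(x))\approx P_Y(x)$ match the paper essentially line for line; you have also correctly identified the assembly step in~(1) as the only place requiring real care.

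The converses of~(2),~(3),~(5) differ in flavor. You repeatedly switch back to the projection characterizations of Subsection~\ref{subsec:fundemental_HHSes}---backtracking of $\pi_U(\alpha)$ in some $U$, distance of $\pi_U(p)$ from the median or hull in some $U$---to manufacture an explicit witness curtain in $\calH^\calU$. The paper instead stays inside the curtain framework: for~(2) it invokes item~(1) to produce a chain separating $\mu(\alpha(t_1),\alpha(t_2),\alpha(t_3))$ from $\alpha(t_2)$ and extracts a double crossing by median convexity; for~(3) it applies coarse Helly to $\calH^+_{x,y,z}\cup\{\hull(x,y,z)\}$ to exhibit a point in the intersection and then bounds the diameter of that intersection by a chain argument; for~(5) it bootstraps directly from item~(4). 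Both routes are short. Yours has the advantage of bypassing the coarse Helly property entirely in~(3), while the paper's version is more self-contained in that each later item feeds only on the earlier ones and on median convexity, rather than on the external projection descriptions.
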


Serving as an evidence to the canonical nature of HHS curtains, the proofs are word by word coarsening of the respective statements in the context of CAT(0) cube complexes. Recall that an HHS curtain is defined to be an $E$-median-convex set $h$ such that $X-h \subseteq h^+ \sqcup h^- $ for some disjoint $E$-median-convex sets $h^+,h^-$ with $\dist(h^+,h^-)>10E.$

\emph{Proof}:
   (1) The first item of Lemma \ref{lem:intro_lemma} is an immediate reflection of the distance formula combined with the Behrstock's inequality. Namely, recall that $\text{Rel}_\theta(x,y)$ denotes the collection of all hyperbolic spaces $U$ with  $\dist(\pi_U(x), \pi_U(y))> \theta.$ By Proposition \cite{RST18}, if $\theta \geq 100E,$ then the set $\text{Rel}_\theta(x,y)$ can be partitioned into $n$ subsets $\{\calU_i\}_{i=1}^n$ with $n$ depending only on $E$ such that each $\calU_i$ consists only of transverse domains. Let $\calU$ be the element in $\{\calU_i\}_{i=1}^n$ so that $\sum_{V \in \calU} \dist_V(x,y)$ is largest among all $\{\calU_i\}_{i=1}^n$. Elements $V_i$ of $\calU$ can be totally ordered by the Behrstock's inequality as $V_1<V_2<\cdots V_{m-1}<V_m.$  Now, let $\alpha_{V_i}$ denote a geodesic connecting $\pi_{V_i}(x)$ to $\pi_{V_i}(y)$ and let $I_i \subset \alpha_{V_i}$ be an interval of length $10E.$ Now, the set $h_{I_i}=\hull( \pi_{V_i} \circ \pi_\alpha ^{-1}(I_{i}))$ is median convex for each $V_i$ and each $I_{i} \subset V_i.$ In particular, each $\alpha_i$ defines a pairwise disjoint collection (i.e., a chain) of curtains $h_{I_i^1}, \cdots h_{I_i^k}$ by choosing some intervals $I_i^j$ that avoids the $E$-neighborhood of $\alpha_i$'s end points and satisfy $\dist (I_i^j, I_i^{j+1})=4E$. Also, $k$ here is coarsely $\dist (\pi_{V_i}(x), \pi_{V_i}(y))$. Let  $c_i=\{h_{I_i^1},\cdots h_{I_i^k}\}$ denote the chain in $X$ defined via $\alpha_i \subset V_i,$ it's immediate by the Behrstock's inequality that $\cup_{i=1}^m c_i$ is a chain that separate $x,y.$ Namely, the total order $V_1<V_2 \cdots V_m$ along with the fact that each $I_i^j$ was chosen to avoid the end points on $\alpha_i$ assures that the curtains defined via $\alpha_i \subset V_i$ are all disjoint from the ones defined via $\alpha_j \subset V_j$ with $i \neq j.$

   \vspace{2mm}
(2) We say that a path $\alpha$ \emph{crosses} $h$ if it meets both $h^+,h^-$. Hence, crossing a curtain $h$ twice means that $\alpha$ contains points $\{\alpha(t_i)\}_{i=1}^3$ with $t_1<t_2<t_3$ and $\alpha(t_1), \alpha (t_3) \in h^+$ and $\alpha(t_2) \in h^-.$ Suppose that $\alpha$ is a median path, recall that this means $\gamma$ is an $E$-median path for the HHS constant $E$. Suppose for the sake of contradiction that $\alpha$ crosses $h^+,h^-,h^+$ at points $\alpha(t_1), \alpha(t_2), \alpha(t_3)$ in that order, then, median convexity of $h^+$ assures that $\alpha(t_2)$ is in the $E$-neighborhood of $h^+$, on the other hand, since $\alpha$ is $E$-median, $\mu(\alpha(t_1), \alpha(t_2), \alpha(t_3))$ is within $E$ of $\alpha(t_2) \in h^-.$ This contradicts our definition that $\dist (h^-, h^+)>10E.$ Conversely, suppose $\alpha$ never crosses a curtain twice and yet $\mu=\mu(\alpha(t_1), \alpha(t_2), \alpha(t_3))$ is far from $\alpha(t_2)$. By item 1, there is a chain of curtains $c=\{h_i\}$ separating $\mu$ from $\alpha(t_2).$ After replacing $c$ by a sub-chain $c' \subseteq c$ of cardinality $ |c'|\geq |c|-2$, we may assume that no element of $c'$ contains $\alpha(t_1)$ or $\alpha(t_2).$ Let $h$ be the curtain in $c'$ closest to $\alpha(t_2)$ and suppose that $h^-$ is the half space containing $\alpha(t_2)$. We claim that $\alpha$ doesn't cross $h$, suppose it does. Since $\alpha$ is assumed not to cross any curtain twice, at least one of $\alpha(t_1), \alpha(t_3)$, say $\alpha(t_3),$ must lie in $h^-$. Since $h^-$ is median convex, $\mu=\mu(\alpha(t_1), \alpha(t_2), \mu(t_3))$ lies in the $E$-neighborhood of $h^-$, which contradicts the fact that $\mu \in h^+$ and $\dist(h^+, h^-)>10E.$

\vspace{2mm}
(3) For item 3, let $\calH$ be the collection of all curtains and $\calH^+_{x,y,z}$ be the collection of all half spaces that contain at least 2 points among $\{x,y,z\}$. It is worth noting that since curtains are thick, $\calH$ doesn't need to contain a half space for \emph{every} curtain $h \in \calH$, namely, the curtains that define $\calH^+_{x,y,z}$ aren't necessarily the collection of all curtains, it could be a smaller sub-collection. Since each $h^+ \in \calH^+_{x,y,z}$ contains at least two of three points, the coarse Helly property applied to $\calH^+_{x,y,z} \cup \{\hull(x,y,z)\}$ implies there is a point $w \in \bigcap_{h^+ \in \calH} N(h^+,E) \cap \hull(x,y,z) $ and in particular $ w \in \bigcap_{h^+ \in \calH} N(h^+,E).$ The reason for considering the set $\hull(x,y,z)$ is that the coarse Helly property requires the existence of a bounded set among the collection under consideration. Notice that $\mu(x,y,z)$ also lies in $\bigcap_{h^+ \in \calH} N(h^+,E)$ using by median convexity of $h^+$ and the fact that $h^+ \in \calH^+_{x,y,z}$. Now, we show that $\diam (\bigcap_{h^+ \in \calH} N(h^+,E))$ is uniformly bounded depending only on $E$. Suppose that $\dist(w,w')$ is large for some $w,w' \in \bigcap_{h^+ \in \calH} N(h^+,E)$, this provides a chain $c=\{h_1,h_2,\cdots h_n\}$ given in this order with $\dist(w,w') \asymp_E n$. Without loss of generality, say that $h_1$ is the closest curtain in $c$ to $w.$ Since $w \in h_1^+,$ the majority of the points $x,y,z$ must lie in $h \cup h^+$ as if they lie in $h^-$, then $w$ would be in $h^-$ by our choice of $w.$ In particular, $h_i^+$ must contain the majority of $x,y,z$, say $x,y,$ for all $1 <i \leq n.$ A large $n$ would contradict the fact that $w' \in \bigcap_{h^+ \in \calH} N(h^+,E)$ concluding the proof.

   \vspace{2mm}

   Recall that for a median convex set $Y$, we have the gate map $P_Y:X \rightarrow Y$ where $P_Y(x)$ is characterized to be coarsely the unique point in $Y$ with $\dist(\mu(x,P_Y(x), y), P_Y(x))<E'$ for all $y \in Y,$ and for some $E'$ depending only on the HHS constant $E,$ see Lemma 2.57 in \cite{Durham-Zalloum22}. When viewed from curtains perspective, this lemma yields an immediate proof of item (4).
\vspace{2mm}

(4) Let $c$ be a chain separating $x, P_Y(x)$ and let $c'=\{h_1,\cdots h_n\} \subset c$ be a subchain all of whose elements $h_i$ meet $Y$. Suppose that $h_1,h_2\cdots h_n$ are ordered based on their proximity to $x,$ in particular, $h_1$ is the curtain in $c'$ closest to $x$. For each $1 \leq i \leq n $, assume that $h_i^+,h_i^-$ are the half spaces containing $x, P_Y(x)$ respectively, by assumption, $h_1$ meets $Y$, and hence there is a point $z \in Y \cap h_1$. Since $x, z \in h_2^+$, median convexity of $h_2^+$ assures that $\mu(x,z,w)$ is coarsely in $h_2^+$ for any $w \in X.$ However, by Lemma 2.57 in \cite{Durham-Zalloum22}, we have $\dist(\mu(x,z,P_Y(x)), P_Y(x))<E'$, a large $n$ would violate the fact that elements of $c'$ separate $x, P_Y(x)$ concluding the proof.

\vspace{2mm}

   (5) For a subset $A$, if $\calH$ is the collection of half spaces properly containing $A$, then $H=\bigcap_{h^+ \in \calH} N(h^+,E)$ is a median convex set that contains $A.$ Since $\hull(A)$ is defined to be the intersection of the $E$-neighborhoods of all the median convex sets containing $A,$ the set $N(H,E)$ contains $\hull(A).$ Now we need to show that it can't be bigger, namely, for $x \in N(H,E)$, we wish to uniformly bound $\dist(x,P_{\hull(A)}(x))$ or equivalently, if $c=\{h_1,\cdots h_k\}$ is a chain separating $x,P_{\hull(A)}(x)$, we wish to uniformly bound $k$. By item 4, all but uniformly many curtains $h_i$ separate $x, \hull(A),$ hence, after disregarding a uniform number of curtains from $c,$ we may assume that $\hull(A) \subset h_i^+$ for each $i.$ Now, by definition of $H,$ the point $x$ must be in the $E$-neighborhood of each $h_i^+$ which uniformly bounds the number $k$ depending only on $E.$

   \vspace{5mm}

We now re-state and prove Lemmas \ref{lem:points_versus_their_gates}, \ref{lem:separated_iff_gate} and Lemma \ref{lem:omniscients_curtains_append}.

\begin{lemma} [Petyt-Spriano-Zalloum]\label{lem:two_points_versus_their_gates_append} Let $h,k$ be two disjoint curtains and let $x,y \in h$. If $c$ is a chain of curtains separating $P_{k}(x), P_{k}(y)$, then all but at most $E'$-many curtains of $c$ separate $x,y$ where $E'$ is a constant depending only on $E.$
    
\end{lemma}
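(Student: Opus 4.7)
The plan is to partition the chain $c$ by recording which side of each curtain the points $x$ and $y$ lie on, and then apply item~(4) of Lemma~\ref{lem:sample_append} to each ``exceptional'' sub-chain to force a uniform bound.

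First I would orient each $h' \in c$ consistently so that $P_k(x) \in h'^+$ and $P_k(y) \in h'^-$; this is possible precisely because $c$ separates $P_k(x)$ from $P_k(y)$. Using that curtains are thick and that the members of a chain are pairwise disjoint, at most two curtains of $c$ can contain $x$ or $y$ in their thick part, and these can be absorbed into the final error term. Every remaining $h'$ then falls into one of three classes: $c_3$ consists of curtains separating $x$ from $y$; $c_1$ consists of curtains with $x, y \in h'^+$; and $c_2$ consists of curtains with $x, y \in h'^-$. The task reduces to bounding $|c_1|$ and $|c_2|$ in terms of $E$ alone.

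Consider $c_1$ (the case of $c_2$ being symmetric). Every $h' \in c_1$ has $y \in h'^+$ and $P_k(y) \in h'^-$, so $h'$ separates $y$ from $P_k(y)$; being a sub-chain of $c$, the collection $c_1$ is itself a chain each of whose members separates $y$ from $P_k(y)$. Applying item~(4) of Lemma~\ref{lem:sample_append} with $Y=k$ then tells me that all but at most $E$ curtains of $c_1$ must separate $y$ from $k$. I would then observe that no $h' \in c_1$ can in fact do this: if $h'$ separates $y \in h'^+$ from $k$, then $k \subseteq h'^-$, so the point $P_k(x) \in k$ would satisfy $P_k(x) \in h'^-$, contradicting the chosen orientation, which places $P_k(x)$ in the disjoint half space $h'^+$. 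Hence $|c_1| \leq E$, and symmetrically $|c_2| \leq E$, yielding the lemma with, say, $E' = 2E + 2$.

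The key step is conceptual: viewing $c_1$ as a chain separating $y$ from $P_k(y)$ so that item~(4) of Lemma~\ref{lem:sample_append} can be invoked for the pair $(y, k)$, rather than trying to compare $c$ directly with a chain between $x$ and $y$. Once this framing is fixed, the contradiction is automatic from the disjointness of half spaces. The minor technical points to be checked are the consistency of the orientation across the chain and the boundary case of a curtain containing $x$ or $y$ in its thick part, both of which cost only a uniform additive constant.
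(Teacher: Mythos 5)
Your proof is correct and follows essentially the same route as the paper's: the curtains of $c$ that fail to separate $x,y$ are organized into sub-chains separating $y$ from $P_k(y)$ (or $x$ from $P_k(x)$) while not separating that point from $k$ (since the other gate lies in $k$ on the same side), so item~(4) of Lemma~\ref{lem:sample_append} bounds their number. You merely make explicit the orientation, the two symmetric classes, and the thick-part boundary cases that the paper's one-line argument leaves implicit.
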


\begin{proof} Let $c$ be be a chain of curtains separating $P_{k}(x), P_{k}(y)$ and let $c'=\{h_1,\cdots h_n\} \subset c$ be the maximal subollection of $c$ that doesn't separate $x,y.$ Item 4 of Lemma \ref{lem:sample_append} gives us that $|c'| \leq E'$ as otherwise we have a chain with more than $E'$ curtains separating $x, P_{k}(x)$ (or $y, P_{k}(y)$) which doesn't separate $x$ (or $y$) from $k.$
\end{proof}

In the context of CAT(0) cube complexes, the following lemma is exactly Proposition 14 in \cite{Genevois16}, compare this with Lemma 7.12 in \cite{Durham-Zalloum22}.

\begin{lemma}[Petyt-Spriano-Zalloum]\label{lem:separated_iff_gate_append} Let $X$ be an HHS. Two disjoint curtains $h,k \subset X$ are $L$-separated if and only if $\diam (P_{k}(h)) \leq L'$, where $L,L'$ determine each other.
\end{lemma}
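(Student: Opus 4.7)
The plan is to prove the two implications separately, each essentially a coarsening of the corresponding CAT(0) cube complex argument from Proposition~14 of \cite{Genevois16}. The two main tools will be item~1 of Lemma~\ref{lem:sample_append} (``distance equals maximal chain length'') and the gate characterization from Lemma~2.57 of \cite{Durham-Zalloum22} that $P_k(a)$ is coarsely unique in $k$ with $\mu(a,y,P_k(a))$ $E$-close to $P_k(a)$ for every $y\in k$.

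For the forward direction ($L$-separated implies bounded projection), I would argue by contraposition. Suppose $\diam(P_k(h))$ is large; pick $x,y\in h$ with $\dist(P_k(x),P_k(y))$ large and invoke item~1 of Lemma~\ref{lem:sample_append} to produce a long chain $c$ of curtains separating $P_k(x)$ from $P_k(y)$. Lemma~\ref{lem:two_points_versus_their_gates_append} then gives that all but uniformly many curtains of $c$ also separate $x$ from $y$. Any such curtain $h'$ meets both $h$ and $k$: since $x,y\in h$ (respectively $P_k(x),P_k(y)\in k$) lie on opposite sides of $h'$ and $h$ (respectively $k$) is median-convex, a median path joining them stays uniformly close to $h$ (respectively $k$) and must cross $h'$, producing a point of $h$ (respectively $k$) uniformly close to $h'$. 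This yields a long chain of curtains meeting both $h$ and $k$, contradicting $L$-separation.

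For the backward direction (bounded projection implies $L$-separation), let $c=\{h_1,\dots,h_n\}$ be a chain of curtains each meeting $h$ and $k$, and pick $a_i\in h_i\cap h$ together with $b_i\in h_i\cap k$. The key step is to show that $P_k(a_i)$ lies uniformly close to $h_i$. By the gate characterization, $\mu(a_i,y,P_k(a_i))$ is $E$-close to $P_k(a_i)$ for every $y\in k$; taking $y=b_i$ and using that $a_i,b_i\in h_i$ together with the $E$-median convexity of $h_i$, the median $\mu(a_i,b_i,P_k(a_i))$ sits within $E$ of $h_i$, and hence so does $P_k(a_i)$. Thus each $P_k(a_i)$ lies uniformly close to $h_i\cap k$.

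Since $\{h_i\}$ is a chain and each $P_k(a_i)$ is coarsely in $h_i$, the curtains $h_j$ with $i<j<i'$ coarsely separate $P_k(a_i)$ from $P_k(a_{i'})$, so item~1 of Lemma~\ref{lem:sample_append} gives $\dist(P_k(a_1),P_k(a_n))\geq n/C$ for a uniform $C$. As $a_1,a_n\in h$, both projections lie in $P_k(h)$, so $\diam(P_k(h))\geq n/C$, and combined with $\diam(P_k(h))\leq L'$ this bounds $n$ in terms of $L'$. The main obstacle I expect is purely bookkeeping: ensuring that all additive errors arising from curtain thickness, median approximations, and the gate characterization aggregate into uniform constants depending only on the HHS constant $E$, and verifying that the ``meets'' relation between a curtain and a median-convex set is robust under the small additive errors the argument introduces.
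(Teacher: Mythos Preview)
Your proof is correct. The forward direction is identical to the paper's: both produce a chain separating $P_k(x),P_k(y)$, invoke Lemma~\ref{lem:two_points_versus_their_gates_append} to make it separate $x,y$ as well, and observe that such curtains meet both $h$ and $k$.

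For the converse the paper takes a slightly quicker route. Rather than using the median characterisation of gates (Lemma~2.57 of \cite{Durham-Zalloum22}) to show each $P_k(a_i)$ lies close to $h_i$ and then spreading the projections along the chain, the paper applies item~4 of Lemma~\ref{lem:sample_append} directly: if $c'\subset c$ is the sub-chain of curtains \emph{not} meeting $P_k(h)$, pick $x\in h_1\cap h$ with $h_1\in c'$ furthest from $P_k(h)$; the remaining elements of $c'$ then separate $x$ from $P_k(x)\in P_k(h)$ while meeting $k$, so item~4 bounds $|c'|$ by $E'+1$. The two arguments are morally the same---both establish that the chain coarsely lives in the bounded set $P_k(h)$---but the paper's route packages into item~4 exactly the median computation you carry out by hand, so it avoids the bookkeeping you flag at the end.
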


\begin{proof} 
The proof of this lemma is a line-by-line coarsening of Proposition 14 in \cite{Genevois16}, for completeness, we provide a proof. If $h,k$ are $L$-separated, then, by the previous lemma, for any $x,y \in h $, if $\{h_1, \cdots h_n\}$ is a chain separating $P_k(x), P_k(y)$, then $\{h_1,\cdots h_n\}$ separate $x,y$, in particular, they all cross $k$ but this number is bounded above by $L$ by the $L$-separation assumption. Conversely, if $c$ is a chain of curtains crossing both $h,k,$ then we claim that all but uniformly finitely many curtains from $c$ must meet $P_k(h)$ from which the statement follows. Suppose that $c'=\{h_1, \cdots h_n\} \subset c$ is a chain neither of whose elements meet $P_k(h)$, and suppose that $h_1$ is the furthest curtain from $P_k(h)$ among curtains in $c'.$ Let $x \in h_1 \cap h$, notice that the curtains $\{h_2, \cdots h_n\}$ separate $x$ from the set $P_k(x) \subset P_k(h)$ but meet $k$, hence, by item 4 of Lemma \ref{lem:sample_append}, we have $|c'|-1 \leq E'$ concluding the proof.
\end{proof}

Recall that a special type of curtains we considered were ones of the form $$h_{U,b,I}:=\hull(\pi_U^{-1}\pi_b^{-1}(I)).$$

\begin{lemma}[Petyt-Spriano-Zalloum]\label{lem:omniscients_curtains_append}
 For $i \in \{1,2\}$, let $U=\calC S$, $b_i \subset U$ be a geodesic, $I_i\subset b_i$ be an interval and let $h_i:=h_{U,b_i, I_i}\subset X$. There are constants $E', L$, depending only on $E,$ such that if $\dist(\pi_U(h_1),\pi_U(h_2))>E',$ then $h_1,h_2$ are $L$-separated.
\end{lemma}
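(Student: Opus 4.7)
The plan is to combine two ingredients: the Durham–Zalloum bounded-gate result (Lemma 7.10 of \cite{Durham-Zalloum22}) which says that when two sets project far apart in the omniscient $\calC S$, the gate of (the hull of) one onto (the hull of) the other has uniformly bounded diameter, together with Lemma \ref{lem:separated_iff_gate_append}, which characterizes $L$-separation of curtains via the diameter of mutual gates.

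Concretely, I would proceed as follows. First, set $A_i := \pi_{U}^{-1}\pi_{b_i}^{-1}(I_i)$, so that $h_i = \hull(A_i)$ for $i\in\{1,2\}$. The hypothesis $\dist(\pi_U(h_1),\pi_U(h_2))>E'$ says precisely that the sets $A_1, A_2$ (and hence their images in $U=\calC S$, which essentially coincide with $I_1, I_2$ up to a controlled neighbourhood because the $\pi_U$-image of a hull is coarsely the hull of the image) project to subsets of the omniscient $\calC S$ that are separated by more than $E'$. Choosing $E'$ sufficiently large in terms of the HHS constant, we may invoke Lemma 7.10 of \cite{Durham-Zalloum22} to conclude that $\diam\bigl(P_{h_1}(h_2)\bigr)\leq L'$ for some constant $L'$ depending only on $E$. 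Finally, applying Lemma \ref{lem:separated_iff_gate_append} to the disjoint pair $h_1,h_2$ converts the bounded-gate condition into $L$-separation, with $L$ determined by $L'$ (and hence by $E$).

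The main obstacle—and the only place where one has to be careful—is the passage from ``$\pi_U(h_1),\pi_U(h_2)$ are far in $\calC S$'' to the hypothesis under which Durham–Zalloum's Lemma 7.10 applies. One must verify that $\pi_U(\hull(A_i))$ is a uniformly bounded Hausdorff distance from $\hull(\pi_U(A_i))\subset U$, which follows from the coarse-median property of the projections $\pi_U$: hulls are preserved up to uniform error by $(E,E)$-coarsely Lipschitz coarse-median maps into hyperbolic spaces. Once this is in hand, increasing $E'$ by a bounded additive amount absorbs all the error terms and lets the hypothesis feed directly into the far-projection hypothesis of Lemma 7.10. Everything else is routine bookkeeping of constants: $E' = E'(E)$ is chosen first to make the Durham–Zalloum hypothesis valid, $L'=L'(E)$ comes out of that lemma, and then $L=L(L')=L(E)$ comes out of Lemma \ref{lem:separated_iff_gate_append}.
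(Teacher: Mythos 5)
Your proposal is correct and is essentially the paper's own argument: the paper proves this lemma by combining Lemma 7.10 and Corollary 7.11 of \cite{Durham-Zalloum22} with Lemma \ref{lem:separated_iff_gate_append}, and the technical point you flag (comparing $\pi_U(\hull(A_i))$ with the hull/interval $I_i$ in $\calC S$ so that the far-projection hypothesis of the bounded-gate lemma applies) is exactly what the cited Corollary 7.11 takes care of.
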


\begin{proof}
The proof of the lemma follows immediately by combining Lemma 7.10, Corollary 7.11 in \cite{Durham-Zalloum22} with Lemma \ref{lem:separated_iff_gate_append} above.
    
\end{proof}

\bibliography{bio}{}
\bibliographystyle{alpha}
\end{document}